\theoremstyle{plain}
\newtheorem{lemma}{Lemma}[section]
\newtheorem{theorem}[lemma]{Theorem}
\newtheorem{proposition}[lemma]{Proposition}
\theoremstyle{remark}
\def\bb{\begin{color}{black}}
\def\bg{\begin{color}{black}}
\def\br{\begin{color}{black}}
\def\bb{\begin{color}{blue}}
\def\bg{\begin{color}{green}}
\def\br{\begin{color}{red}}
\def\bbr{\begin{color}{brown}}
\def\eg{\end{color}}
\def\er{\end{color}}
\def\eb{\end{color}}
\def\PP{\mathbb{P}}
\def\ve{{\varepsilon}}
\def\le{\leqslant}
\def\pd{\partial}
\def\ge{\geqslant}
\def\es{\emptyset}
\def\E{{\mathbb E}}
\def\O{{\Omega}}
\def\R{{\mathbb R}}
\def\N{{\mathbb N}}
\def\a{{\alpha}}
\def\b{{\beta}}
\def\d{{\delta}}
\def\t{{\tau}}
\def\g{{\gamma}}
\def\G{{\Gamma}}
\def\s{{\sigma}}
\def\l{{\lambda}}
\def\th{{\theta}}
\def\o{{\omega}}
\def\z{{\zeta}}
\def\cA{{\cal A}}
\def\cL{{\cal L}}
\def\cF{{\cal F}}
\def\cH{{\cal H}}
\def\q{\quad}
\def\id{\operatorname{id}}
\def\im{\operatorname{im}}
\def\sgn{\operatorname{sgn}}
\def\dvv{\operatorname{div}}
\def\<{\langle}
\def\>{\rangle}
\def\ra{\rightarrow}
\def\sse{\subseteq}
\def\sm{\setminus}
\renewcommand\epsilon{\ve}
\def\spann{\operatorname{span}}
\def\cinf{C^\infty }
\def\oxy{\Omega^{x,y}}
\def\mexy{\mu^{x,y}_\ve}
\def\mexyU{\mu^{x,y,U}_\ve}
\def\mexyd{\mu^{x,y,\R^d}_\ve}
\def\mex{\mu^x_\ve}
\def\tmexy{\tilde\mu^{x,y}_\ve}
\def\tmexyd{\tilde\mu^{x,y,\R^d}_\ve}
\def\mg{\mu_\g}
\def\dg{\d_\g}
\def\ctm{T^* M}
\def\txm{T_x M}
\def\tym{T_y M}
\def\tgtm{T_{\g_t}M}
\def\tgsm{T_{\g_s}M}
\def\tgoxy{{T_\g\oxy}}
\def\ctxm{T^*_x M}
\def\ctym{T^*_y M}
\def\ctgtm{T^*_{\g_t}M}
\def\ctgsm{T^*_{\g_s}M}
\def\hx{H^x}
\def\hxy{H^{x,y}}
\def\hrm{H^0(\R^m)}
\def\hrp{H^0(\R^p)}
\def\orm{{\O^0(\R^m)}}
\def\tohx{T_\o H^x}
\def\ttohx{\tilde T_\o H^x}
\def\tohxy{T_\o H^{x,y}}
\def\ttohxy{\tilde T_\o H^{x,y}}
\def\tghxy{{T_\g H^{x,y}}}
\def\tghx{{T_\g H^x}}
\def\j{
\def\Ext{\operatorname{Ext}}
\def\C{\operatorname{Cut}}
\def\Hom{\operatorname{Hom}}
\def\Sp{\operatorname{Sp}}
\def\Spin{\operatorname{Spin}}
\def\varpm{\operatorname{pm}}
\def\ord{\operatorname{ord}}
\def\modul{\operatorname{modul}}
\def\sgn{\operatorname{sgn}}
\def\sq{\operatorname{sq}}
\def\supp{\operatorname{supp}}
\def\isom{\operatorname{isom}}
\def\PSL{\operatorname{PSL}}
\def\PIP{\operatorname{PIP}}
\def\PIL{\operatorname{PIL}}
\def\PD{\operatorname{PD}}
\def\PL{\operatorname{PL}}
\def\GL{\operatorname{GL}}
\def\Diff{\operatorname{Diff}}
\def\Out{\operatorname{Out}}
\def\Inn{\operatorname{Inn}}
\def\Aut{\operatorname{Aut}}
\def\End{\operatorname{End}}
\def\Isom{\operatorname{Isom}}
\def\SL{\operatorname{SL}}
\def\id{\operatorname{id}}
\def\Def{\operatorname{Def}}
\def\Trace{\operatorname{Trace}}
\def\trace{\operatorname{trace}}
\def\Tr{\operatorname{Tr}}
\def\Im{\operatorname{Im}}
\def\Re{\operatorname{Re}}
\def\codim{\operatorname{codim}}
\def\Im{\operatorname{Im}}
\def\im{\operatorname{im}}
\def\romancap{\operatorname{cap}}
\def\vol{\operatorname{Vol}}
\def\tan{\operatorname{tan}}
\def\liminf{\operatorname{liminf}}
\def\Lim{\operatorname{Lim}}
\def\mod{\operatorname{mod}}
\def\grad{\operatorname{grad}}
\def\arc{\operatorname{arc}}
\def\loc{\operatorname{loc}}
\def\arctan{\operatorname{arctan}}
\def\cosh{\operatorname{cosh}}
\def\inj{\operatorname{inj}}
\def\deg{\operatorname{deg}}
\def\smear{\operatorname{smear}}
\def\straight{\operatorname{straight}}
\def\support{\operatorname{support}}
\def\represent{\operatorname{represent}}
\def\represents{\operatorname{represents}}
\def\sin{\operatorname{sin}}
\def\id{\operatorname{id}}
\def\volume{\operatorname{Volume}}
\def\wordint{\operatorname{int}}
\def\Div{\operatorname{Div}}
\def\Area{\operatorname{Area}}
\def\area{\operatorname{area}}
\def\diam{\operatorname{diam}}
\def\av{\operatorname{av}}
\def\ex{\operatorname{ex}}
\def\exp{\operatorname{exp}}
\def\curl{\operatorname{curl}}
\def\Curl{\operatorname{Curl}}
\def\Trace{\operatorname{Trace}}
\def\divergence{\operatorname{divergence}}
\def\dim{\operatorname{dim}}
\def\Ker{\operatorname{Ker}}
\def\rank{\operatorname{rank}}
}
\begin{document}

\bibliographystyle{plain}

\begin{center}
\LARGE \textbf{Small-time fluctuations for the bridge of a sub-Riemannian diffusion}

\vspace{0.2in}

\large {\bfseries Ismael Bailleul\footnote{Institut de Recherche Math\'ematiques de Rennes, 263 Avenue du General Leclerc, 35042 Rennes, France},
Laurent Mesnager\footnote{Modal'x, Universit\'e de Paris X, 200 avenue de la R\'epublique, 92001 Nanterre, France}
\&
James Norris\footnote{Statistical Laboratory, Centre for Mathematical Sciences, Wilberforce Road, Cambridge, CB3 0WB, UK}\footnote{Research supported by EPSRC grant EP/103372X/1}
}

\vspace{0.2in}
\small \today

\end{center}
\vspace{0.2in}


\begin{abstract}
We consider small-time asymptotics for diffusion processes conditioned by their initial and final positions, under the assumption that the diffusivity has a sub-Riemannian structure, not necessarily of constant rank. 
We show that, if the endpoints are joined by a unique path of minimal energy, 
and lie outside the sub-Riemannian cut locus,
then the fluctuations of the conditioned diffusion from the minimal energy path, suitably rescaled, converge to a Gaussian limit.
The Gaussian limit is characterized in terms of the bicharacteristic flow, and also in terms of a second variation of the energy functional at the minimal path, the formulation of which is new in this context.
\end{abstract}

\vspace{0.2in}

\section{Introduction}\label{NPD}
Consider a second order differential operator on $\R^d$ in H\"ormander's form\footnote{%
We identify here $X_\ell$ with the differential operator $\sum_{i=1}^dX^i_\ell(x)\pd/\pd x^i$.}
\begin{equation}\label{HOR}
\cL=\frac12\sum_{\ell=1}^mX_\ell^2+X_0
\end{equation}
where $X_0,X_1,\dots,X_m$ are vector fields on $\R^d$.
Let us assume for now that
\begin{equation}\label{BDA}
X_0,X_1,\dots,X_m\text{  are bounded with bounded derivatives of all orders}
\end{equation}
and that $\cL$ satisfies the strong H\"ormander condition on $\R^d$, that is to say,
\begin{equation}\label{LADB}
\spann\{Y(x):Y\in\cA(X_1,\dots,X_m)\}=T_x\R^d,\q\text{for all $x\in\R^d$}.
\end{equation}
Here $\cA(X_1,\dots,X_m)$ denotes the smallest set of vector fields on $\R^d$ containing $X_1,\dots,X_m$ 
and closed under the commutator product, given by
$$
[X,Y](x)=\sum_{i=1}^dX^i(x)\frac{\pd Y}{\pd x^i}(x)-Y^i(x)\frac{\pd X}{\pd x^i}(x).
$$
Our main result concerns the small-time fluctuations of diffusion bridges associated to $\cL$.
For $x,y\in\R^d$, write $\oxy$ for the set of continuous paths $\o:[0,1]\to\R^d$ such that $\o_0=x$ and $\o_1=y$.
For $\ve>0$, denote by $\mexy$ the law on $\oxy$ of the diffusion bridge associated to $\ve\cL$ starting from $x$ at time $0$ and ending at $y$ at time $1$.

Given an absolutely continuous path $\o:[0,1]\to\R^d$, it may be that there exists an absolutely continuous path
$h:[0,1]\to\R^m$ such that, for almost all $t$,
$$
\dot\o_t=\sum_{\ell=1}^mX_\ell(\o_t)\dot h_t^\ell.
$$
Then the energy $I(\o)$ may be defined by
\begin{equation}\label{IP}
I(\o)=\inf\int_0^1|\dot h_t|^2dt
\end{equation}
where the infimum is taken over all such paths $h$. 
If $\o$ is not absolutely continuous, or there is no such path $h$, then we set $I(\o)=\infty$.

In the case where $x$ and $y$ are joined by a unique path $\g$ of minimal energy, we will write $\tgoxy$ for the
set of continuous paths $v:[0,1]\to T\R^d$ such that $v_t\in T_{\g_t}\R^d$ for all $t$ and $v_0=v_1=0$.
Given $\o\in\oxy$ and $\ve>0$, define $\s_\ve(\o)\in\tgoxy$ by
$$
\s_\ve(\o)_t=\frac{\o_t-\g_t}{\sqrt\ve}.
$$
Then define a probability measure $\tmexy$ on $\tgoxy$ by
$$
\tmexy=\mexy\circ\s_\ve^{-1}.
$$
The sub-Riemannian cut locus was defined by Bismut \cite{B} in terms of the bicharacteristic flow associated to the principal symbol $a$ of the operator $2\cL$, which is given by
\begin{equation}\label{PSE}
a(x)=\sum_{\ell=1}^mX_\ell(x)\otimes X_\ell(x). 
\end{equation}
This is reviewed in detail in Section \ref{SRN}.
We can now state a version of our main result.

\begin{theorem}\label{RDMAIN}
Let $\cL$ be a second order differential operator on $\R^d$ of the form \eqref{HOR}.
Assume that $\cL$ satisfies conditions \eqref{BDA} and \eqref{LADB}.
Let $x,y\in\R^d$.
Suppose that there is a unique path $\g$ of minimal energy in $\oxy$ and that $(x,y)$ lies outside the cut locus.
Then $\tmexy$ converges weakly to a Gaussian probability measure $\mu_\g$ on $\tgoxy$ as $\ve\to0$.
\end{theorem}

The covariance of the limit measure $\mu_\g$ is given in terms of the bicharacteristic flow in Section \ref{SRN}.
Theorem \ref{RDMAIN} is proved in Section \ref{LAP}.

This theorem raises some further questions.
First, once the global condition is made that $\g$ have minimal energy, it is natural to hope that a suitable modification of the theorem holds under more local hypotheses.
In particular, we would seek to drop the strong conditions that the underlying space is $\R^d$ and that the operator coefficients are bounded with bounded derivatives of all orders.
%
Second, in the Riemannian case, the limit Gaussian measure $\mu_\g$ can be characterized in terms of the second variation of the energy function at the minimal path.
This leads us to seek an analogous intrinsic object in the sub-Riemannian case.
In order to address these questions, we now reset to a more general framework. 

Let $M$ be a connected $\cinf$ manifold of dimension $d$ and let $a$ be a $\cinf$ non-negative quadratic form on the cotangent space $T^*M$.
We assume that $a$ has a sub-Riemannian structure, that is to say, there exist $m\in\N$ and $\cinf$ vector fields $X_1,\dots,X_m$ on $M$ such that
\begin{equation}\label{LAC}
a(\xi,\xi)=\<\xi,a(x)\xi\>=\sum_{\ell=1}^m\<\xi,X_\ell(x)\>^2,\q \xi\in T_x^*M
\end{equation}
and such that 
\begin{equation}\label{LAD}
\spann\{Y(x):Y\in\cA(X_1,\dots,X_m)\}=T_xM,\q\text{for all $x\in M$}.
\end{equation}
There is associated to the quadratic form $a$ an energy function $I$ on the set of continuous paths $\O=C([0,1],M)$.
While this can be defined as in \eqref{IP}, the following equivalent definition makes clear that $I$ is intrinsic to the quadratic form $a$.
An absolutely continuous path $\o\in\O$ may have a driving path $\xi$, 
by which we mean a measurable path $\xi$ in $T^*M$ such that $\xi_t\in T^*_{\o_t}M$ and $\dot\o_t=a(\o_t)\xi_t$ for almost all $t$. 
Then $\o$ has energy
$$
I(\o)=\int_0^1\<\xi_t,a(\o_t)\xi_t\>dt.
$$
If $\o$ is not absolutely continuous or has no driving path, then $I(\o)=\infty$. 
Write $H^x$ for the subset of $\O$ consisting of paths of finite energy starting from $x$.
For $x,y\in M$, set 
$$
\hxy=\{\o\in H^x:\o_1=y\}.
$$
It is well known, under the bracket condition \eqref{LAD}, that $\hxy$ is non-empty and that the sub-Riemannian distance
\begin{equation}\label{DIST}
d(x,y)=\inf_{\o\in\hxy}\sqrt{I(\o)}
\end{equation}
defines a metric compatible with the topology of $M$.


Our main result concerns the case where $x$ and $y$ are chosen so that $I$ achieves a minimum on $\hxy$ uniquely, say at $\g$.
We will then construct, under a regularity condition on $\g$, a vector space $\tghxy$ of absolutely continuous paths $v$ in $TM$, with $v_t\in\tgtm$ for all $t$ and $v_0=v_1=0$,
along with an equivalence class of norms on $\tghxy$, each making $\tghxy$ into a Hilbert space.
The paths in $\tghxy$ can be thought of as the infinitesimal variations of $\g$ in $\hxy$.
We will further construct a continuous non-negative quadratic form $Q$ on $\tghxy$ such that $Q(v)$ is the minimal second variation of $I$ in the direction $v$, in a sense to be made precise.
These constructions are the content of Section \ref{SUB}.

The sub-Riemannian cut locus, as defined by Bismut \cite{B}, was shown by Ben Arous \cite{BA} to have an alternative characterization 
in terms of a quadratic form on control paths associated to a compatible sub-Riemannian structure.
We will show in Theorem \ref{2.7} that $(x,y)$ lies outside the sub-Riemannian cut locus if and only if our regularity condition on $\g$ holds and $Q$ is positive-definite.
Since the vector space $\tghxy$ and the quadratic form $Q$ are intrinsic to $a$, 
this provides an intrinsic characterization of the cut locus in terms of the energy function in this general setting.

We will show by an example in Section \ref{SUB} that, when $a$ has non-constant rank, it may admit two inequivalent sub-Riemannian structures.
Thus there is a difference in this case between the given data for control theory, namely the sub-Riemannian structure, even up to equivalence, and for hypoelliptic heat flow, namely the quadratic form $a$.

The sub-Riemannian cut locus is less well understood than its Riemannian counterpart. 
It is known to be a closed and symmetric subset of $M\times M$. 
Rifford \& Tr\'elat \cite{MR2139255} and Agrachev \cite{MR2513150} have proved results which limit its size.
The reader may find in the lecture notes of Agrachev, Barilari \& Boscain \cite[Theorems 10.4 and 10.11]{ABB} a proof that, for all $x\in M$ and any $r\in(0,\infty)$ such that $B=\{y\in M:d(x,y)\le r\}$ is compact, 
the set of points $y\in B$ such that $(x,y)$ lies outside the cut locus is dense in $B$.

Let now $\cL$ be a second order differential operator on $M$ with $\cinf$ coefficients, such that $\cL1=0$ and such that $\cL$ has principal symbol $a/2$.
In each coordinate chart, $\cL$ thus takes the form
\begin{equation}\label{LAB}
\cL=\frac12\sum_{i,j=1}^da^{ij}(x)\frac{\pd^2}{\pd x^i\pd x^j}+\sum_{i=1}^db^i(x)\frac{\pd}{\pd x^i}
\end{equation}
for some $\cinf$ functions $b^i$ on $M$.
In this context we refer to $a$ as the diffusivity.
Given that $a$ has sub-Riemannian structure $(X_1,\dots,X_m)$, 
we obtain the same family of operators by choosing another $\cinf$ vector field $X_0$ on $M$ and setting
\begin{equation}\label{HSS}
\cL=\frac12\sum_{\ell=1}^mX_\ell^2+X_0.
\end{equation}
The special case with which we began now corresponds to the following global condition
\begin{equation}\label{URD}
M=\R^d\q\text{and}\q X_0,X_1,\dots,X_m\text{  are bounded with bounded derivative of all orders.}
\end{equation}
We will consider also a different set of conditions, where no further condition is made on $M$ or $a$, in particular no condition of completeness, but we insist that there is a $\cinf$ positive measure $\nu$ and a $\cinf$ $1$-form $\b$ 
such that
\begin{equation}\label{CFO}
\cL f=\tfrac12\dvv(a\nabla f)+a(\b,\nabla f)
\end{equation}
where the divergence is understood with respect to $\nu$, 
and such that $\b$ satisfies the sector condition
\begin{equation}\label{SECT}
\|a(\b,\b)\|_\infty<\infty.
\end{equation}
Note that \eqref{CFO} implies that
$$
X_0=\sum_{\ell=1}^m\a_\ell X_\ell,\q \a_\ell=\frac12\dvv X_\ell+\<\b,X_\ell\>.
$$
In particular, there is a loss with respect to \eqref{URD}, since we now require that, for all $x\in M$,
\begin{equation}\label{HORIZ}
X_0(x)\in\spann\{X_1(x),\dots,X_m(x)\}.
\end{equation}
The estimates obtained by Malliavin calculus under \eqref{URD} make essential use of that condition, and do not appear to allow a variation under local conditions.
On the other hand, heat kernel estimates for incomplete manifolds, on which our localization argument depend, have so far required \eqref{HORIZ}.
We note that the sector condition is satisfied trivially in the case $\b=0$.
Thus our results apply to all sub-Riemannian Laplacians, without any restriction of completeness.

There is a family of probability measures on $\oxy$ which is naturally associated to the operator $\cL$.
Fix $\ve>0$ and $x\in M$. 
There exists a diffusion process starting from $x$ and having generator $\ve\cL$.
Since, in general, the coefficients of $\cL$ may be unbounded and we make no assumption of completeness for $M$, this diffusion may explode with positive probability, that is to say, it may leave all compact sets in a finite time.
We will write $\mex$ for the unique sub-probability measure on $\O$ which is the law of this diffusion restricted to paths which do not explode by time $1$.
Under our assumptions, there is a unique family of probability measures $(\mexy:y\in M)$ on $\O$ which is weakly continuous in $y$, 
with $\mexy$ supported on $\oxy$ for all $y$, and such that
$$
\mex(d\o)=\int_M\mexy(d\o)p(\ve,x,dy)
$$
where $p(\ve,x,.)$ is the (sub-)law of $\o_1$ under $\mex$.
More explicitly, the finite-dimensional distributions of each measure $\mexy$ may be written as follows.
There is a positive $\cinf$ function $p$ on $(0,\infty)\times M\times M$ such that
$$
p(\ve,x,dy)=p(\ve,x,y)\nu(dy).
$$
This function $p$ is the Dirichlet heat kernel for $\cL$ with respect to $\nu$.
Then, for all $k\in \N$, all $t_1,\dots,t_k\in(0,1)$ with $t_1<t_2<\dots<t_k$ and all $x_1,\ldots,x_k\in M$, we have
\begin{align*}
&\mexy(\{\o:\o_{t_1}\in dx_1,\ldots,\o_{t_k}\in dx_k\})\\
&\q\q=\frac{p(\ve t_1,x,x_1)p(\ve(t_2-t_1),x_1,x_2)\ldots p(\ve(1-t_k),x_k,y)}{p(\ve,x,y)}
\nu(dx_1)\dots\nu(dx_k).
\end{align*}
It is straightforward to see that these finite-dimensional distributions are consistent and do not depend on the choice of $\nu$.

Our main aim is to understand the fluctuation behaviour of the diffusion bridge measures $\mexy$ in the limit $\ve\to0$.
A path $\g\in\oxy$ is minimal if $I(\g)<\infty$ and 
$$
I(\g)\le I(\o)\text{ for all $\o\in\oxy$}.
$$
We will say that $\g$ is strongly minimal if, in addition, there exist $\d>0$ and a relatively compact open set $U$ in $M$ such that 
\begin{equation}\label{SMIN}
I(\g)+\d\le I(\o)\text{ for all $\o\in\oxy$ which leave $U$}.
\end{equation}
When $M$ is complete for the sub-Riemannian distance, all metric balls are relatively compact, 
so every minimal path is strongly minimal.
Also, if there is a unique minimal path $\g\in\oxy$, which is strongly minimal, then, by a weak compactness argument,
for all relatively compact domains $U$ containing $\g$, there is a $\d>0$ such that \eqref{SMIN} holds.
\def\j{
\begin{theorem}\label{VAR}
Let $M$ be a connected $\cinf$ manifold.
Let $\cL$ be a second order differential operator on $M$ of the form \eqref{CFO}
$$
\cL f=\tfrac12\dvv(a\nabla f)+a(\b,\nabla f)
$$
where the diffusivity $a$ has a sub-Riemannian structure, 
where the divergence is taken with respect to a positive $\cinf$ locally invariant measure, and where $\b$ is a $\cinf$ $1$-form satisfying the sector condition \eqref{SECT}.
Then, for all $x,y\in M$, as $t\to0$, we have
\begin{equation}\label{LOGP}
t\log p(t,x,y)\to-d(x,y)^2/2.
\end{equation}
Moreover, for all $\d>0$, for $r=\d^{1/4}(d(x,y)^2+\d)^{1/2}$, we have
\begin{equation}\label{LDEV}
\limsup_{\ve\to0}\ve\log\mexy(\{\o\in\oxy:d(\o_t,\G_t(\d))\ge r\text{ for some }t\in[0,1]\})\le-\d/2
\end{equation}
where
$$
\G_t(\d)=\{\g_t:\g\in\hxy,\, I(\g)\le d(x,y)^2+\d\}.
$$
In particular, in the case where there is a unique minimal path $\g\in\hxy$ which is strongly minimal, we have $\mexy\to\dg$ as $\ve\to0$ weakly on $\oxy$, 
where $\dg$ is the unit mass at $\g$.
\end{theorem}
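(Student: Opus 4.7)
The proof splits naturally into three connected tasks: the logarithmic asymptotic \eqref{LOGP} for the heat kernel, the large deviation bound \eqref{LDEV} for the bridge, and the weak convergence to $\dg$. The engine driving all three is a pathwise Schilder-type large deviation principle for the free diffusion measures $\mex$ on $\O=C([0,1],M)$ with good rate function $\o\mapsto I(\o)/2$; the bridge statements then follow by taking heat kernel ratios. I would first establish this pathwise LDP by the classical Freidlin--Wentzell strategy applied to the Stratonovich SDE obtained from \eqref{HSS} driven by an $\ve$-scaled Brownian motion. The hypothesis that $X_0\in\spann\{X_1,\dots,X_m\}$ together with the sector condition \eqref{SECT} and local invariance of $\nu$ ensures that the drift does not perturb the rate function and that explosion is irrelevant on the LDP scale; a localization (cut off outside a large compact set, apply the LDP locally, and control non-explosion by a Fernique-type tail estimate) handles the absence of completeness.

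For \eqref{LOGP}, the upper bound follows by applying the LDP upper bound to the open set $\{\o:\o_1\in B(y,\rho)\}$ and using parabolic regularity of $p$ to remove the averaging against $\nu$ as $\rho\to0$. The lower bound is the harder half: given $\eta>0$, choose $\tilde\g\in\hxy$ with $I(\tilde\g)<d(x,y)^2+\eta$ and run a Girsanov tilt that makes the $\ve$-diffusion follow a thin tube around $\tilde\g$; the Girsanov cost is $I(\tilde\g)/(2\ve)+o(1/\ve)$, and a Malliavin smoothness estimate (or a Jerison--S\'anchez-Calle type off-diagonal Gaussian lower bound) converts the tube probability into a pointwise density bound $p(\ve,x,y)\ge\exp(-(d(x,y)^2+2\eta)/(2\ve))$. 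This is the delicate step in the non-constant-rank sub-Riemannian regime.

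For \eqref{LDEV}, the key observation is that if $\o\in\hxy$ satisfies $\o_t\notin\G_t(\d)$ for some $t$, then by the very definition of $\G_t(\d)$ the path $\o$ itself witnesses $I(\o)>d(x,y)^2+\d$. Writing the bridge probability as the smeared ratio
\[
\mexy(A_r)=\frac{\int_{B(y,\rho)}\mu^{x,y'}_\ve(A_r)\,p(\ve,x,y')\,\nu(dy')}{p(\ve,x,y)\,\nu(B(y,\rho))}\bigl(1+o(1)\bigr),
\]
where $A_r$ denotes the tube-avoidance event in \eqref{LDEV}, I would apply the LDP upper bound from Step~1 to the numerator and the sharp lower bound on $p$ from Step~2 to the denominator, yielding $\limsup_\ve\ve\log\mexy(A_r)\le-\d/2$. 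The margin $r=\d^{1/4}(d(x,y)^2+\d)^{1/2}$ appears to be tuned so that the modulus of continuity of $y'\mapsto\G_t^{x,y'}(\d)$ on $B(y,\rho)$ is absorbed when smearing the endpoint, via the elementary estimate that changing the endpoint by $\rho$ shifts $\G_t(\d)$ by at most a multiple of $\sqrt{\rho(d(x,y)^2+\d)}$ in sub-Riemannian distance.

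Finally, for the weak convergence, a standard weak compactness argument in $\hxy$ shows that if $\g$ is the unique strongly minimal path, then for every $\eta>0$ there exists $\d>0$ such that $\sup_{t\in[0,1]}\dist(\G_t(\d),\g_t)<\eta$: the strong minimality forces every sequence $\g^{(n)}\in\hxy$ with $I(\g^{(n)})\to d(x,y)^2$ to lie eventually in the relatively compact set $U$, then a subsequence converges uniformly to another strongly minimal path, which must be $\g$ by uniqueness. Combining this with \eqref{LDEV} applied at the chosen $\d$ yields $\mexy(\{\sup_t d(\o_t,\g_t)>2\eta\})\to0$, i.e.\ weak convergence of $\mexy$ to $\dg$ on $\oxy$. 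The principal obstacle throughout is extracting the sharp pointwise lower bound on $p(\ve,x,y)$ in Step~2 with the exact exponential rate $d(x,y)^2/(2\ve)$, since this requires a Girsanov--Malliavin argument compatible with the sector condition and with the possibly degenerate rank of $a$; once this is in hand the remaining steps reduce to LDP bookkeeping and a compactness argument.
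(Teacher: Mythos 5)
Your overall architecture (Schilder-type LDP for $\mu^x_\ve$, then pass to the bridge via heat-kernel ratios) is the natural probabilistic strategy, but it is precisely the strategy the paper is designed to \emph{avoid}, because it does not work at the level of generality claimed in Theorem \ref{VAR}. The paper makes this point explicitly in the discussion following the theorem: L\'eandre's probabilistic proof of the small-time log-asymptotics, which is essentially what you are reconstructing, requires the global hypothesis \eqref{URD}, and ``while the lower bound extends easily, for the upper bound it appears that analytic techniques are needed.'' The missing step in your sketch is exactly the phrase ``using parabolic regularity of $p$ to remove the averaging against $\nu$ as $\rho\to0$.'' In the sub-Riemannian, incomplete, non-compact setting this parabolic regularity is not available off the shelf; it is the hard analytic input, and the paper obtains it by assembling the Nagel--Stein--Wainger distance estimates, Jerison's Poincar\'e inequality, the volume doubling property, and Sturm's parabolic mean-value inequality into the pointwise upper bound of Theorem \ref{HTK}. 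An LDP for $\mu^x_\ve$, even granting that you could prove it with a good rate function on an incomplete manifold, gives estimates on probabilities of sets, not on the density $p(t,x,y)$, and your localization-plus-Fernique remark does not close this gap. So the upper half of \eqref{LOGP} is not established by your argument.

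For \eqref{LDEV}, the paper does not use a smeared ratio of LDP bounds at all. It lifts the problem to $\tilde M=M\times\R$ with the operator $\tilde\cL=\cL+\tfrac12(\pd/\pd\tau)^2$, applies Theorem \ref{HTK} (the heat kernel through a closed set $\tilde K$) on $\tilde M$, and uses the product structure $\tilde\mu^{\tilde x,\tilde y}_\ve=\mexy\otimes\b^{0,1}_\ve$ to split off a Brownian bridge in the extra $\R$-coordinate. In particular your explanation of the margin $r=\d^{1/4}(d(x,y)^2+\d)^{1/2}$ is not correct: it does not absorb an endpoint modulus-of-continuity of $\Gamma^{x,y'}_t(\d)$ in $y'$, but rather comes from a \emph{time-reparametrization} estimate: if the lifted path stays in the tube, one has $\o_t=\g_s$ for some $s$ with $|t-s|\le\sqrt\d$, and then $d(\g_s,\g_t)\le\sqrt{|t-s|\,I(\g)}\le\d^{1/4}(d(x,y)^2+\d)^{1/2}$. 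Your smeared-ratio formula also has an order-of-limits problem: the claimed $(1+o(1))$ requires $\rho$ small relative to $\ve$, but the LDP upper bound you propose to apply to the numerator is an $\ve\to0$ statement at fixed $\rho$. The lower bound of \eqref{LOGP} (Girsanov tilt around a near-minimal path) and the final weak-compactness argument for $\mexy\to\dg$ are in line with what the paper does and says, but the two central estimates rest on a method the paper explicitly rules out in this generality.
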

The small-time logarithmic asymptotics \eqref{LOGP} were proved by Varadhan \cite{MR0208191} in the case when $M=\R^d$ and $a$ is everywhere positive-definite.
They were generalized by L\'eandre \cite{MR871256,MR904825} to the sub-Riemannian case, under the global assumption \eqref{URD}.
While the lower bound extends easily, under the hypothesis of Theorem \ref{VAR}, for the upper bound it appears that analytic techniques are needed, and we arrive at \eqref{LOGP} by checking the applicability of a general result of Sturm \cite{MR1355744} in this context.
In the case of Brownian motion in a complete Riemannian manifold, the exponential estimate \eqref{LDEV} follows from a result of Hsu \cite{MR1027823}. 
Under the assumption \eqref{URD} and subject to the condition that $a(x)$ is positive-definite, \eqref{LDEV} follows from work of Inahama \cite{MR3391911}. 

We now go beyond the law of large numbers and exponential asymptotics contained in Theorem \ref{VAR} to identify a small-time equivalent for the heat kernel
and to obtain a central limit theorem for the diffusion bridge measures.
Assume that $x$ and $y$ are chosen so that
\begin{equation}\label{UPM}
\text{there is a unique minimal path $\g\in\hxy$, which is strongly minimal}
\end{equation}
and further that $(x,y)$ lies outside the sub-Riemannian cut locus. 
\br
[Include this now in Theorem \ref{MR}.]
We first state a small-time equivalent for the heat kernel, which was proved by Ben Arous \cite{BA} under the condition \eqref{URD}.
In fact, Ben Arous obtained a full asymptotic expansion, with local regularity statements, but we will be content with the basic form.

\begin{theorem}\label{BALOC}
Under the conditions of Theorem \ref{VAR}, in the case where there is a unique minimal path $\g\in\hxy$, which is strongly minimal, and $(x,y)$ lies outside the cut locus, 
there is a constant $c(x,y)>0$ such that, in the limit $t\to0$,
$$
p(t,x,y)=c(x,y)t^{-d/2}\exp\{-d(x,y)^2/(2t)\}(1+o(1)).
$$
\end{theorem}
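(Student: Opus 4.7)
The plan is to reduce the theorem to the corresponding result of Ben Arous \cite{BA}, which was established under the global condition \eqref{URD}, via a localization argument supported by the exponential concentration already established in Theorem \ref{VAR}. Concretely, I will replace $\cL$ by a modified operator $\tilde\cL$ on $\R^d$ that agrees with $\cL$ on a neighborhood of $\g$ and has globally bounded coefficients, apply Ben Arous's theorem to $\tilde\cL$, and show that the heat kernels of $\cL$ and $\tilde\cL$ share the same leading asymptotic at $(x,y)$.

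First I would use the strong minimality of $\g$ to pick a relatively compact open $U$ containing $\g$ together with an energy gap $\d>0$ such that every path in $\hxy$ leaving $U$ has energy at least $d(x,y)^2+\d$. After shrinking $U$ so that $\bar U$ sits inside a single coordinate chart identified with a domain in $\R^d$, a standard cutoff extends $X_0,X_1,\dots,X_m$ to vector fields $\tilde X_0,\tilde X_1,\dots,\tilde X_m\in\cinf_b(\R^d,\R^d)$ coinciding with the originals on $U$. The operator $\tilde\cL=\tfrac12\sum_\ell\tilde X_\ell^2+\tilde X_0$ then satisfies \eqref{URD}. Since $\g$ stays in $U$ and $\tilde a=a$ there, $\g$ remains the unique strongly minimal path from $x$ to $y$ for $\tilde a$ (possibly after a final shrinking of $U$ to rule out competing minimizers created outside $U$), and $(x,y)$ still lies outside the sub-Riemannian cut locus for $\tilde\cL$.

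Next, let $\tilde p$ denote the heat kernel of $\tilde\cL$. Ben Arous's theorem gives $\tilde p(t,x,y)=\tilde c(x,y)t^{-d/2}\exp\{-d(x,y)^2/(2t)\}(1+o(1))$, and since this constant is determined by local data along $\g$, we have $\tilde c(x,y)=c(x,y)$. It remains to show $p(t,x,y)=\tilde p(t,x,y)(1+o(1))$. I would let $q_U$ be the Dirichlet heat kernel on $U$ for $\cL$, which equals that of $\tilde\cL$ since the two operators coincide on $U$, and use the strong Markov property at the exit time $\t_U$ to write $p=q_U+r$ and $\tilde p=q_U+\tilde r$, where $r$ and $\tilde r$ collect the contributions of paths that have left $U$ before time $t$. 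Combining \eqref{LDEV} applied to the bridge measures $\mexy$ and its $\tilde\cL$ analogue with a uniform Gaussian-type upper bound on $p(s,z,y)$ derived from the techniques underlying \eqref{LOGP}, both $r$ and $\tilde r$ are $O(\exp\{-(d(x,y)^2+\d')/(2t)\})$ for some $\d'>0$, hence negligible compared with the main term.

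The hard part will be making the bound on the exit contribution rigorous. What is needed is an upper bound on $p(s,z,y)$ of Gaussian form, uniform for $z$ ranging over a compact neighborhood of $\pd U$, matching the exponential scale of the lower bound in \eqref{LOGP}. The energy gap $\d$ afforded by the strong minimality produces the strict improvement $\d'>0$ in the exponent, but only once paired with such an upper bound; this in turn would be obtained by a careful application of the parabolic estimates underlying the Sturm-type argument \cite{MR1355744} used to establish \eqref{LOGP}. Everything else is routine, relying on the Markov decomposition above and the known small-time behaviour of $\tilde\cL$ on $\R^d$.
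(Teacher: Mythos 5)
Your overall strategy --- modify $\cL$ outside a neighbourhood of $\g$ to get an operator on $\R^d$ satisfying \eqref{URD}, apply Ben Arous's result, and show the Dirichlet heat kernel on $U$ carries the leading asymptotic --- is exactly the route the paper follows (Theorems \ref{NFBT} and \ref{HTK}). However, there are two genuine gaps. The first is that a plain cutoff $\tilde X_\ell=\chi X_\ell$ does \emph{not} give a sub-Riemannian structure on $\R^d$: outside $\{\chi>0\}$ all the vector fields vanish, so the bracket condition \eqref{LAD} fails and \eqref{URD} is not satisfied, hence Ben Arous's theorem cannot be invoked. The paper fixes this by adjoining $d$ additional fields $\bar X_{m+i}=\bar\chi\,e_i$ with $\bar\chi$ supported off a slightly larger neighbourhood $U_1$, so that the brackets of $\bar X_1,\dots,\bar X_m$ span on $\{\chi>0\}$ and $\bar X_{m+1},\dots,\bar X_{m+d}$ themselves span on $\{\bar\chi>0\}$; one must then re-check that $\g$ remains the unique minimal path for the modified $\bar a$, which requires choosing $\chi$ small near $\pd U_1$ so that paths reaching $\pd U_1$ have modified energy exceeding $I(\g)$, and using a stopping-time argument for paths that escape $U_1$.

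The second gap is the one you yourself flag as ``the hard part'': to make the remainder $r=p-p_U$ (and $\tilde r$) exponentially negligible, you need the sharp upper bound $\limsup_{t\to0}t\log p(t,x,M\sm U,y)\le -d(x,M\sm U,y)^2/2$ together with $d(x,M\sm U,y)>d(x,y)$, the latter coming from strong minimality. Establishing that upper bound in the sub-Riemannian, non-constant-rank, possibly incomplete setting is the content of Theorem \ref{HTK}, which occupies most of Section \ref{NFB}: it requires a dual formulation of the distance through a closed set $K$ (Proposition \ref{DUAL}, nontrivial in the absence of completeness), a reflection construction on the doubled manifold $\tilde M$ to handle the through-$K$ kernel, Jerison's Poincar\'e inequality and the Nagel--Stein--Wainger volume estimates, and Sturm's parabolic mean-value inequality. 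Your proposal correctly identifies where this estimate is needed but does not supply it, and it is not a routine consequence of the arguments behind \eqref{LOGP}; in particular, \eqref{LOGP} alone gives the right exponent only for $p(t,x,y)$, not for the Dirichlet-difference $p(t,x,M\sm U,y)$, which is the quantity controlling $r$.
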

\er
}%

For $x,y\in M$, joined by a unique minimal path $\g$, with $(x,y)$ outside the cut locus, the quadratic form $Q$ defines an intrinsic Hilbert norm on $\tghxy$.
Write $\tgoxy$ for the set of continuous paths $v$ in $TM$ such that $v_t\in\tgtm$ for all $t$ and $v_0=v_1=0$.
We make $\tgoxy$ into a Banach space using the uniform norm $\|v\|_\infty=\sup_{t\in[0,1]}|v_t|$ corresponding to a choice of Riemannian metric on $M$.
The associated topology on $\tgoxy$, which is all that matters for us, does not depend on the choice of metric.
We will show in Theorem \ref{SRMR} that there is a unique zero-mean Gaussian measure $\mg$ on $\tgoxy$ such that
$$
\int_\tgoxy\phi(v)^2\mg(dv)=Q(\tilde\phi)
$$
for all continuous linear functionals $\phi$ on $\tgoxy$, where $\tilde\phi\in\tghxy$ is determined by 
$$
\phi(v)=Q(\tilde\phi,v),\q v\in\tghxy.
$$
We rescale the fluctuations of the diffusion bridge around the minimal path $\g$ to obtain a non-degenerate limit.
To do this, we choose a $\cinf$ map $\th:M\to\R^d$ such that the derivative $\th^*(\g_t):\tgtm\to\R^d$ is invertible for all $t\in[0,1]$.
We can always choose a chart $U$ along $\g$ and obtain a suitable function $\th$ by extending the coordinate map from a neighbourhood of $\g$.
Define $\s_\ve:\oxy\to\tgoxy$ by
$$
\s_\ve(\o)_t=\th^*(\g_t)^{-1}(\th(\o_t)-\th(\g_t))/\sqrt\ve.
$$
Then we obtain a probability measure $\tmexy$ on $\tgoxy$ by setting
$$
\tmexy=\mexy\circ\s_\ve^{-1}.
$$
It is straightforward to check that if $\tmexy$ converges weakly on $\tgoxy$ as $\ve\to0$ for one choice of the function $\th$, 
then it does so for all such choices and with the same limit.
Here is our main result.

\begin{theorem}\label{MR}
Let $M$ be a connected $\cinf$ manifold of dimension $d$.
Let $\cL$ be a second order differential operator on $M$ of the form \eqref{CFO} and suppose that the diffusivity of $\cL$ has a
sub-Riemannian structure.
Let $x,y\in M$.
Suppose that there is a unique minimal path $\g\in\hxy$ and that $(x,y)$ lies outside the cut locus.
Suppose, either that
$$
d(x,y)<d(x,\infty)+d(y,\infty)
$$
or that $\g$ is strongly minimal and $\cL$ satisfies the sector condition \eqref{SECT}.
Then there is a constant $c(x,y)>0$ such that, in the limit $t\to0$,
\begin{equation}\label{BALOC}
p(t,x,y)=c(x,y)t^{-d/2}\exp\{-d(x,y)^2/(2t)\}(1+o(1)).
\end{equation}
Moreover, the rescaled diffusion bridge measures satisfy 
$$
\tmexy\to\mg\q\text{ weakly on $\tgoxy$ as $\ve\to0$}.
$$
\end{theorem}

The asymptotic equivalence \eqref{BALOC} is a localized form of the standard small-time equivalent for the heat kernel.
It was shown in the elliptic case, under the condition $d(x,y)<\max\{d(x,\infty),d(y,\infty)\}$ by C. Bellaiche \cite[Chapter 9, Theorem 4.1]{MR634964}.
It was shown in the hypoelliptic case under the condition \eqref{URD} by Ben Arous \cite{BA}.
In the case of Brownian motion on a compact Riemannian manifold, the convergence $\tmexy\to\mg$, with $\mg$ characterized by its covariance, was derived by Molchanov \cite{Mo}, but a full proof was not given.
Many of the techniques we use follow ideas pioneered by Bismut \cite{B} and Ben Arous \cite{BA} in their studies of the heat kernel, indeed, it turns out that the diffusion bridge asymptotics are another side of the same story.
Three recent works have built on the fundamental ideas of Bismut and Ben Arous in complementary directions to ours. 
First, Deuschel, Friz, Jacquier \& Violante \cite{MR3139426,MR3149845} have obtained small time expansions for marginal distributions of the heat flow.
Second, Barilari, Boscain \& Neel \cite{MR3005058} have found estimates of the heat kernel actually on the cut locus.
Third, Habermann \cite{KH} has identified the limiting fluctuations for sub-Riemannian diffusion loops, that is when $x=y$, which can show non-Gaussian behaviour.

The rest of the paper is set out as follows.
In Section \ref{SRN}, we recall the notions of the bicharacteristic flow associated to $a$ and the sub-Riemannian cut locus.
We also give a characterization of the limit fluctuation measure $\mg$ in terms of the bicharacteristic flow.
The proof of Theorem \ref{RDMAIN} is given in Section \ref{LAP}, adapting a method of Azencott, Bismut and Ben Arous, and relying on ideas of Malliavin calculus.
Then in Section \ref{LL}, we show how a heat kernel upper bound provides a localization estimate suitable to deduce Theorem \ref{MR}.
\def\j{
In Section \ref{NFB}, we prove Theorem \ref{VAR} and the estimate \eqref{LOC}.
This section may be read independently of the preceding sections.
}
Sections \ref{SUB} and \ref{QG} are devoted to the geometry of some spaces of paths, leading to the intrinsic construction of the minimal second
variation $Q$ of the energy function and its associated Gaussian measure. 
These two sections may be read independently of the preceding probabilistic and analytic parts.
Finally, in Section \ref{RIEM}, we discuss the case where $a(x)$ is positive-definite for all $x$, and thus defines a Riemannian metric. 
In this case the limit measure $\mg$ has several different characterizations using classical objects of Riemannian geometry. 

We are grateful to Emmanuel Tr\'elat and Fabrice Baudoin for helpful advice in the course of this work, and to Karen Habermann for a careful reading of the manuscript.

\section{Bicharacteristic flow, cut locus and fluctuation measure}\label{SRN}
The discussion in this section applies both, in the case $M=\R^d$, to the principal symbol $a$ defined at \eqref{PSE}
and to the quadratic form $a$ introduced at \eqref{LAC}. 
The bicharacteristic flow is the maximal flow $(\psi_t(\l):\l\in T^*M,t\in D(\l))$ of the vector field $V$ on $T^*M$
given by 
$$
\b(V,.)=d\cH
$$ 
where $\b$ is the canonical symplectic $2$-form on $T^*M$ and $\cH:T^*M\to[0,\infty)$ is the Hamiltonian
$$
\cH(\l)=\tfrac12\<\l,a(x)\l\>,\q \l\in T_x^*M.
$$
Thus, for each $\l\in T^*M$, $D(\l)=(\z^-,\z^+)$ is an open interval containing $0$, we have $\psi_0(\l)=\l$ and 
$$
\dot\psi_t(\l)=V(\psi_t(\l)),\q t\in D(\l)
$$
and $\psi_t(\l)$ leaves all compact sets in $T^*M$ as $t\to\z^+$ if $\z^+<\infty$ and as $t\to\z^-$ if $\z^->-\infty$.
If $M$ is complete for the sub-Riemannian metric, then $D(\l)=\R$ for all $\l\in\ctm$.
In a coordinate chart $U$ for $M$, for $\l\in U$, write $\l_t=\psi_t(\l)=(x_t,p_t)$ while $\l_t$ remains in $T^*U$, with $x_t\in U$ and $p_t\in\R^d$.
Then, for any sub-Riemannian structure $(X_1,\dots,X_m)$ for $a$, 
writing $\<p,\nabla X(x)\>_i$ for $(\pd/\pd x^i)\<p,X(x)\>$, we have
\begin{align}\notag
\dot x_t&=\sum_{\ell=1}^m\<p_t,X_\ell(x_t)\>X_\ell(x_t),\q x_0=x\\
\label{BCF}
\dot p_t&=-\sum_{\ell=1}^m\<p_t,X_\ell(x_t)\>\<p_t,\nabla X_\ell(x_t)\>,\q p_0=p.
\end{align}

Fix $x,y\in M$ and suppose that 
\begin{equation}\label{UMP}
\text{there is a unique minimal path $\g$ in $\hxy$}.
\end{equation}
Write $\pi$ for the projection $T^*M\to M$.
We assume that $\g$ is a normal minimizer, meaning that
\begin{equation}\label{PBC}
\text{there exists a bicharacteristic $(\l_t)_{t\in[0,1]}$ such that $\g_t=\pi\l_t$ for all $t\in[0,1]$.}
\end{equation}
For $t\in[0,1]$, write $\l_t=\psi_t(\l_0)$ and define linear maps $J_t:T^*_xM\to\tgtm$ and $K_t:T^*_yM\to\tgtm$ by
\begin{equation}\label{JFE}
J_t\xi_0
=\left.\frac{\pd}{\pd\ve}\right|_{\ve=0}\pi\psi_t(\l_0+\ve\xi_0),\quad
K_t\xi_1
=\left.\frac{\pd}{\pd\ve}\right|_{\ve=0}\pi\psi_{-(1-t)}(\l_1-\ve\xi_1).
\end{equation}
We assume that $(x,y)$ is non-conjugate for $(\l_t)_{t\in[0,1]}$, meaning that 
\begin{equation}\label{NC}
\text{$J_1$ is invertible.}
\end{equation}
Following Bismut \cite{B}, when conditions \eqref{UMP}, \eqref{PBC} and \eqref{NC} hold, the pair $(x,y)$ is said to lie outside the cut locus of $a$.
By a simple and well known argument, \eqref{NC} implies that the bicharacteristic projecting to $\g$ is unique.

The following statement is part of Theorem \ref{SRMR}.
\begin{theorem}\label{MRM}
Assume the hypotheses of Theorem \ref{RDMAIN} or those of Theorem \ref{MR}. 
Then the limit measure $\mg$ is the unique zero-mean Gaussian measure on $\tgoxy$ with covariance given for $s\le t$ by
\begin{equation}\label{ZMG}
\int_\tgoxy v_s\otimes v_t\,\mu_\g(dv)=J_sJ_1^{-1}K_t^*.
\end{equation}
\end{theorem}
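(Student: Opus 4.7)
Given that Theorem \ref{SRMR} characterises $\mg$ by the variance identity $\int_\tgoxy \phi(v)^2\,\mg(dv) = Q(\tilde\phi)$, where $\tilde\phi \in \tghxy$ is the $Q$-Riesz representative of a continuous linear functional $\phi$, the plan is to polarise this identity and test it against evaluation functionals to read off the covariance. Specialising to $\phi_{s,\xi}(v) = \<\xi, v_s\>$ and $\phi_{t,\eta}(v) = \<\eta, v_t\>$, the formula \eqref{ZMG} is equivalent to the statement that the Dirichlet Green kernel $G(s,t)$ of the quadratic form $Q$ on $\tghxy$ equals $J_s J_1^{-1} K_t^*$ for $s \le t$, and its transpose for $s \ge t$.

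The key step is to identify the Euler--Lagrange operator $\L$ of $Q$ as the Jacobi operator of the bicharacteristic flow, with the paths $t \mapsto J_t \xi_0$ and $t \mapsto K_t\xi_1$ as its fundamental Jacobi fields. Since the bicharacteristic $(\l_t)$ is the Hamiltonian flow of $\cH$ and $\g$ projects from it, infinitesimal variations of $\l_\cdot$ that fix either endpoint project to elements of $\tghxy$ lying in the kernel of $\L$: this yields the left-vanishing family $J_\cdot \xi_0$ (since $J_0 = 0$) and the right-vanishing family $K_\cdot \xi_1$ (since $K_1 = 0$). Invertibility of $J_1$ is supplied by the non-conjugacy hypothesis \eqref{NC}, so $G$ is well-defined by the classical splicing recipe for the Green kernel of a one-dimensional Sturm--Liouville-type problem. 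The four required properties, boundary vanishing, $\L$-harmonicity off the diagonal, the correct jump at $s = t$, and symmetry $G(s,t)^* = G(t,s)$, then follow: the first two from the definition and the Jacobi character of $J$ and $K$, and the last two from the symplectic reciprocity that $\omega(\tilde J_s \xi_0, \tilde K_s \eta)$ is constant in $s$ for the lifted Jacobi fields in $T(T^*M)$. Evaluating this conserved quantity at $s = 0$ and $s = 1$ pins down the normalisation $K_0 = J_1^*$, from which the jump condition and the symmetry $J_s J_1^{-1} K_t^* = K_t J_1^{-*} J_s^*$ at $s = t$ drop out. A standard integration by parts against $Q$ then gives $Q(\tilde\phi_{s,\xi},\tilde\phi_{t,\eta}) = \<\xi, G(s,t)\,\eta\>$, and \eqref{ZMG} follows by polarisation.

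The main obstacle is not the Green-kernel computation but the identification of $\L$ with the Jacobi operator of the bicharacteristic in the general sub-Riemannian setting, where $a$ may have non-constant rank and $\tghxy$ need not admit a clean horizontal/vertical decomposition. The intrinsic minimal second variation $Q$ must first be shown to admit a well-posed second-order Euler--Lagrange formulation on $\tghxy$ whose null space is spanned by the base projections of linearised bicharacteristics; this is precisely the geometric content of the constructions in Sections \ref{SUB} and \ref{QG}. Granted that material, the remainder of the argument is the classical Green-kernel bookkeeping described above.
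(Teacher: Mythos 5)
Your high-level strategy matches the paper's: establish the variance identity $\int\phi(v)^2\,\mg(dv)=Q(\tilde\phi)$ (Theorem \ref{SRMR}(i)), then identify the $Q$-Riesz representative of the evaluation functional $\phi_{s,\b}(v)=\<\b,v_s\>$ with the spliced field $v^{\b,s}_t = C(t,s)\b$ where $C(s,t)=J_sJ_1^{-1}K_t^*$, and polarise. The symplectic reciprocity $J_1=K_0^*$ that you invoke is also exactly what the paper proves at the end of Section \ref{SRN}.

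The gap is in how you propose to verify that $v^{\b,s}$ is the Riesz representative. You phrase this as classical Green-kernel bookkeeping for an Euler--Lagrange operator $\L$ of $Q$, whose null space is spanned by the projected linearised bicharacteristics $J_\cdot\xi_0$ and $K_\cdot\xi_1$, and you acknowledge at the end that identifying such an $\L$ in the non-constant-rank sub-Riemannian setting is ``the main obstacle.'' That obstacle is not filled by the paper's Sections \ref{SUB} and \ref{QG} in the form you need: the paper never formulates $Q$ as a second-order differential operator on $\tghxy$ (there is no clean horizontal--vertical splitting of $TM$ along $\g$ when $a$ degenerates, so the Jacobi operator of Section \ref{RIEM} has no obvious analogue). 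Instead, Proposition \ref{2.8} proves the Riesz identity $Q(v^{\b,s},v')=\<\b,v'_s\>$ \emph{directly} at the level of the control-path quadratic form $q$ on $K=\ker(\pd/\pd h)\phi_1(x,h)$, using the explicit formula \eqref{DIFF} for $q(k,k')$ and the variation-of-constants identity \eqref{al} to write $w=w^1-w^2$ with $w^1_t=J_t\eta_0$, $w^2_t = J_{ts}\b\,1_{\{t\ge s\}}$; the jump and symmetry come from building the same object $\hat w$ in reversed time and using positive-definiteness of $Q$ to conclude $w=\hat w$. So while your ``splicing'' picture is correct and your symplectic reasoning for $J_1=K_0^*$ is sound, the route through an intrinsic operator $\L$ is not available here, and you would need to replace the Green-kernel bookkeeping by a computation of $q$ along the lines of Proposition \ref{2.8} (or argue that a well-posed operator formulation exists, which would itself require proof in the degenerate-rank case).
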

The characterization of the cut locus and limit measure $\mg$ in terms of the bicharacteristic flow is computationally effective and does not require the
construction of the quadratic form $Q$.
On the other hand, the alternative characterizations given Theorems \ref{2.7} and \ref{SRMR}, in terms of $Q$, confirm in the setting of an infinite-dimensional path space 
an intuition derived from analogous considerations for functions and measures in finite dimensions.
Moreover, the proof of Theorem \ref{RDMAIN} is by analysis in path space and leads naturally to the formulation given.

We conclude this section with some remarks on symmetry under time-reversal.
The following calculation shows that $J_1=K_0^*$ and hence that the cut locus is symmetric.
Since $V$ is Hamiltonian, its flow preserves the symplectic form $\b$.
See for example \cite{M}.
For $\xi\in\ctm$, write $\tilde\xi$ for the corresponding vertical vector in $T\ctm$ and write $\psi_t^*$ for the action of $\psi_t$ on $T\ctm$. 
Then
$$
\<J_1\xi_0,\xi_1\>
=\<\pi^*\psi_1^*\tilde\xi_0,\xi_1\>
=\b(\psi_1^*\tilde\xi_0,\tilde\xi_1)
=\b(\tilde\xi_0,\psi_{-1}^*\tilde\xi_1)
=-\<\xi_0,\pi^*\psi_{-1}^*\tilde\xi_1\>
=\<\xi_0,K_0\xi_1\>.
$$
Consider the case where $\cL$ has the form \eqref{CFO} and $\dvv(a\b)=0$, that is, $\nu$ is invariant for $\cL$.
Then, for compactly supported $\cinf$ functions $f,g$ on $M$, we have
$$
\int_Mf\cL g\,d\nu=\int_Mg\hat\cL f\,d\nu.
$$
where 
$$
\hat\cL f=\tfrac12\dvv(a\nabla f)-a(\b,\nabla f).
$$
and the associated bridge measure satisfies $\hat\mu_\ve^{y,x}=\mexy\circ{\wedge}^{-1}$,
where $\wedge:\oxy\to\O^{y,x}$ is the time-reversal map, given by $\hat\o_t=\o_{1-t}$.
Hence, in this case, the whole set-up is invariant under time-reversal.

\section{Laplace's method on Wiener space}\label{LAP}
In this section we prove Theorem \ref{RDMAIN}, following closely the method used by Ben Arous \cite{BA} to study the heat kernel.
The vector fields $X_0,X_1,\dots,X_m$ provide both a means to construct a diffusion process $(x^\ve_t)_{t\ge0}$ with generator $\ve\cL$ starting from $x$ 
and a parametrization of the set $H^x$ of finite energy paths starting from $x$. 
Let $(B_t)_{t\in[0,1]}$ be a Brownian motion in $\R^m$, which we assume is realized as the coordinate process on $\O^0(\R^m)=\{w\in C([0,1],\R^m):w_0=0\}$ under Wiener measure $\PP$.
Define a vector field $\tilde X_0$ on $\R^d$ by
$$
\tilde X_0^i(x)=X_0^i(x)+\frac12\sum_{\ell=1}^m\<\nabla X^i_\ell(x),X_\ell(x)\>.
$$
Consider the It\^o stochastic differential equation in $\R^d$
$$
dx_t^\ve=\sqrt\ve\sum_{\ell=1}^mX_\ell(x_t^\ve)dB_t^\ell+\ve\tilde X_0(x_t^\ve)dt,\q x_0^\ve=x.
$$
This has a unique strong solution $(x_t^\ve)_{t\in[0,1]}$, whose law on $\O=C([0,1],\R^d)$ is $\mu_\ve^x$.
There is no explosion.
Write $H^0(\R^m)$ for the set of Cameron--Martin paths $h$ in $\R^m$, that is to say, the set of absolutely continuous functions $h:[0,1]\to\R^m$, starting from $0$, such that
$$
\int_0^1|\dot h_t|^2dt<\infty.
$$
Given $h\in H^0(\R^m)$, consider the differential equation in $\R^d$
\begin{equation}\label{FEP}
d\phi_t=\sum_{\ell=1}^mX_\ell(\phi_t)dh^\ell_t,\q \phi_0=x.
\end{equation}
There is a unique solution $\phi(x,h)=(\phi_t(x,h))_{t\in[0,1]}$ in $\O$.
As we will show in Proposition \ref{2.1}, in fact $\phi(x,h)\in H^x$ and $\phi(x,.)$ maps $H^0(\R^m)$ onto $H^x$.
Recall that $\g$ is the minimizing path in $H^{x,y}$ and $\g$ is the projection of a bicharacteristic $\l$.
Define $h\in H^0(\R^m)$ by $\dot h_t^\ell=\<\l_t,X_\ell(\g_t)\>$.
Then $\g=\phi(x,h)$ and $I(\g)=\int_0^1|\dot h_t|^2dt$.
We reserve the notation $h$ for this minimizing control path from now on.
For $s\in[0,1]$, we will write $(\phi_{ts}(x,h))_{t\in[s,1]}$ for the solution to \eqref{FEP} starting from $x$ at time $s$.
We denote the derivative in $x$ by $\phi^*_{ts}(x,h)$ and set $u_t=\phi_t^*(x,h)$.
Then $u_t$ is invertible for all $t$ and 
$$
du_t=\sum_{\ell=1}^m\nabla X_\ell(\g_t)u_tdh^\ell_t,\q u_0=I.
$$
Moreover, we have $\phi^*_{ts}(\g_s,h)=u_tu_s^{-1}$.
We can define a continuous linear map $v:\O^0(\R^m)\to\tgoxy$ by\footnote{%
Since $M=\R^d$ in the present discussion, $\tgoxy$ can be naturally identified with a subset of $\O$, but we will keep the distinction anyway.}
\begin{equation}\label{VEQ}
v_t(w)=\sum_{\ell=1}^m\int_0^t\phi^*_{ts}(\g_s,h)X_\ell(\g_s)dw_s^\ell=\sum_{\ell=1}^mu_t\int_0^tu_s^{-1}X_\ell(\g_s)dw_s^\ell
\end{equation}
where the integral is understood by a formal integration by parts.
Set $Y_t=v_t(B)$ and note that $Y_1$ is a zero-mean Gaussian random variable in $\R^d$ having covariance matrix $\bar C_1=u_1C_1u_1^*$, where
$$
C_1=\sum_{\ell=1}^m\int_0^1(u_t^{-1}X_\ell(\g_t))\otimes(u_t^{-1}X_\ell(\g_t))dt.
$$
In \cite{B}, Bismut called $C_1$ the deterministic Malliavin covariance matrix.
Condition \eqref{NC} implies that $C_1$ is invertible. 
This follows in particular from \eqref{al}.
Hence $Y_1$ has a density function with respect to Lebesgue measure on $\R^d$, given by
$$
\bar p(z)=(2\pi)^{-d/2}(\det\bar C_1)^{-1/2}\exp\{-\<z,\bar C_1^{-1}z\>/2\}.
$$
Set\footnote{%
Formally, we have
$$
Y_t=\frac\pd{\pd h}\phi_t(x,h)(B),\q Y^{(2)}_1=\frac{\pd^2}{\pd h^2}\phi_1(x,h)(B,B)
$$
which may be seen by differentiating \eqref{FEP} twice in $h$ in the direction $B$ and solving the resulting equations by variation of constants.
}
$$
Y^{(2)}_1=\sum_{\ell=1}^m\int_0^1\phi^*_{1t}(\g_t,h)\left\{\nabla^2X_\ell(\g_t)(Y_t,Y_t)dh_t^\ell+2\nabla X_\ell(\g_t)Y_tdB_t^\ell\right\},\q S=\<\l_1,Y_1^{(2)}\>.
$$
where the integral $Y_tdB_t$ is understood in the sense of It\^o.
Define a linear map $\t:\R^d\to\hrm$ by
$$
\dot\t_t^\ell(z)=\left\<C_1^{-1}u_1^{-1}z,u_t^{-1}X_\ell(\g_t)\right\>.
$$
Set $K=\{k\in\hrm:v_1(k)=0\}$.
From \eqref{VEQ} we see that $\t$ maps $\R^d$ onto the orthogonal complement $K^\perp$ of $K$ in $\hrm$. 
Moreover, $v_1(\t(z))=z$ and the restriction of the map $\t\circ v_1$ to $\hrm$ is the orthogonal projection $\hrm\to K^\perp$.
Set
\begin{equation}\label{BWW}
W_t=B_t-W'_t,\q W'_t=\t_t(v_1(B)).
\end{equation}
Then $(W_t)_{t\in[0,1]}$ and $(W'_t)_{t\in[0,1]}$ are independent continuous Gaussian processes, and $v_1(W)=0$ and $W'\in K^\perp$ almost surely.
For $z\in\R^d$, set $W_t(z)=W_t+\t_t(z)$ and $Y_t(z)=v_t(W(z))$ and $S(z)=\<\l_1,Y_1^{(2)}(z)\>$, where
\begin{equation}\label{SDEF}
Y^{(2)}_1(z)=\sum_{\ell=1}^m\int_0^1\phi^*_{1t}(\g_t,h)\left\{\nabla^2X_\ell(\g_t)(Y_t(z),Y_t(z))dh_t^\ell+2\nabla X_\ell(\g_t)Y_t(z)dW_t^\ell(z)\right\}.
\end{equation}
We interpret the integral with respect to $W(z)$ by writing 
$$
Y_t(z)dW_t(z)=v_t(B-W'+\t(z))d(B_t-W_t'+\t_t(z))
$$ 
and expanding. 
The term $v_t(B)dB_t$ is considered as an It\^o integral, 
while the remaining terms can be considered as integrals with respect to Lebesgue measure, sometimes after a formal integration by parts.
It is straightforward to see, using the same version of the It\^o integral for all $z$, 
that the family of random variables $(Y^{(2)}_1(z):z\in\R^d)$ is continuous in $z$.
Also, $Y(z)$ and $Y_1^{(2)}(z)$ are independent of $v_1(B)$ for all $z$.
Note that $W(v_1(B))=B$ and $Y(v_1(B))=Y$, and that $S(v_1(B))=S$ almost surely. 
Note also that $Y^{(2)}_1(z)$ belongs to the sum of the zeroth, first and second Wiener chaoses in $L^2(\O^0(\R^m),\PP)$ for all $z$.
By Theorem \ref{SRMR}, we have $\E(e^{pS(z)/2})<\infty$ for all $z\in\R^d$ for some $p>1$.

Consider the function 
$$
E(z)=d(x,z)^2/2=\inf_{\o\in H^{x,z}}I(\o)/2.
$$ 
As Bismut \cite[Theorem 1.26]{B} showed, $E$ is $\cinf$ in a neighbourhood $N$ of $y$, with $E'(y)=\l_1$.
Following Ben Arous \cite[Lemma 3.8]{BA}, there then exists a function $F\in C^\infty_b(\R^d)$ such that the map $F+E$ has a unique and non-degenerate 
minimum at $y$, with minimum value $0$.
To see this, choose a neighbourhood $N_0$ of $y$ and a $C^\infty$ function $\chi$ of compact support such that $1_{N_0}\le\chi\le1_N$.
Fix a constant $\a>0$ and consider the function
$$
F(z)=\chi(z)\left(\a|y-z|^2-E(y)-E'(y)(z-y)\right)+(1-\chi(z)).
$$
Then $F\in C^\infty_b(\R^d)$ and $F(y)+E(y)=0$ and $F'(y)+E'(y)=0$.
Moreover, by choosing $\a$ sufficiently large, we can ensure that $F''(y)+E''(y)$ is positive-definite and $F(z)+E(z)>0$ for all $z\not=y$,
so $F+E$ has a non-degenerate minimum at $y$, which is also its global minimum.
We fix a choice of a function $F$ with the given properties for the rest of the analysis.

Set $\g^0=\g$ and define $(\g_t^\ve)_{t\in[0,1]}$ for $\ve>0$ as the strong solution of the stochastic differential equation
\begin{equation}\label{GSDE}
d\g_t^\ve=\sum_{\ell=1}^mX_\ell(\g_t^\ve)dh_t^\ell+\sqrt\ve\sum_{\ell=1}^mX_\ell(\g_t^\ve)dB_t^\ell+\ve\tilde X_0(\g_t^\ve)dt,\q \g_0^\ve=x.
\end{equation}
By standard results on stochastic differential equations, for all $t\in[0,1]$ and all $p\in[1,\infty)$, the map $\ve\mapsto\g_t^\ve:[0,\infty)\to L^p(\PP)$ is continuous.
Furthermore, we can and do choose versions so that, almost surely, the map $\s\mapsto\g_t^{\s^2}:[0,\infty)\to\R^d$ is $\cinf$.
Moreover, the first and second derivatives at $\s=0$ then satisfy
$$
\left.\frac\pd{\pd\s}\right|_{\s=0}\g_t^{\s^2}=Y_t,\q 
\left.\left(\frac\pd{\pd\s}\right)^2\right|_{\s=0}\g_1^{\s^2}=Y^{(2)}_1+Z_1,\q
Z_1=\int_0^1\phi_{1t}^*(\g_t,h)\tilde X_0(\g_t)dt.
$$
Now the map $f(\s)=F(\g_1^{\s^2})$ is $\cinf$ on $[0,\infty)$ and $F(y)=-d(x,y)^2/2$ and $F'(y)=-\l_1$, so
$$
f(0)=F(y)=-\frac12\int_0^1|\dot h_t|^2dt,\q
f'(0)=F'(y)Y_1=-\sum_{\ell=1}^m\int_0^1\dot h_t^\ell dB_t^\ell
$$
and
$$
f''(0)=F'(y)(Y^{(2)}_1+Z_1)+F''(y)(Y_1,Y_1).
$$
Set
$$
R(\ve)=\int_0^1(1-\th)f''(\th\sqrt\ve)d\th.
$$
Then, by Taylor's theorem,
$$
F(\g_1^\ve)=f(\sqrt\ve)=f(0)+\sqrt\ve f'(0)+\ve R(\ve)=-\frac12\int_0^1|\dot h_t|^2dt-\sqrt\ve\sum_{\ell=1}^m\int_0^1\dot h_t^\ell dB_t^\ell+\ve R(\ve).
$$
Set 
$$
\tilde x^\ve_t=\frac{x_t^\ve-\g_t}{\sqrt\ve},\q\tilde\g_t^\ve=\frac{\g^\ve_t-\g_t}{\sqrt\ve}
$$
and note that, for all $t\in[0,1]$, we have $\tilde\g^\ve_t\to Y_t$ as $\ve\to0$ almost surely. 
For $\ve>0$, consider the new probability measure $\PP^\ve$ on $\orm$ given by $d\PP^\ve/d\PP=\rho_1^\ve$, where
$$
\rho_1^\ve=\exp\left\{-\frac1{\sqrt\ve}\sum_{\ell=1}^m\int_0^1\dot h_t^\ell dB_t^\ell-\frac1{2\ve}\int_0^1|\dot h_t|^2dt\right\}=\exp\left\{\frac{F(\g_1^\ve)}\ve-R(\ve)\right\}.
$$
Note that \eqref{GSDE} can be rewritten in the form
$$
d\g_t^\ve=\sqrt\ve\sum_{\ell=1}^mX_\ell(\g_t^\ve)dB_t^{\ve,\ell}+\ve\tilde X_0(\g_t^\ve)dt,\q \g_0^\ve=x
$$
where $B^{\ve,\ell}_t=B^\ell_t+h_t^\ell/\sqrt\ve$.
By the Cameron-Martin formula, under $\PP^\ve$, the process $(B^\ve_t)_{t\in[0,1]}$ is a Brownian motion, so $\g^\ve$ has law $\mu_\ve^x$.

At this point we modify the argument of Ben Arous by introducing a smooth cylindrical function $G$ on $\O$ which serves to keep track of the paths of the diffusion bridge.
Fix $t_1,\dots,t_k\in(0,1)$ with $t_1<\dots<t_k$ and a $\cinf$ function $g$ on $(\R^d)^k$ of polynomial growth.
Set $G(\o)=g(\o_{t_1},\dots,\o_{t_k})$.
Define for $z\in\R^d$ and $\ve>0$
\begin{align*}
G_\ve(z)
&=\ve^{d/2}p(\ve,x,y+\sqrt\ve z)e^{-F(y+\sqrt\ve z)/\ve}\int_{C([0,1],\R^d)} G(\o)\tilde\mu_\ve^{x,y+\sqrt\ve z}(d\o),\\
G_0(z)
&=\bar p(z)e^{\<\l_1,Z_1\>/2-F''(y)(z,z)/2}\E(G(Y(z))e^{S(z)/2}).
\end{align*}
Here we have written $\tilde\mu_\ve^{x,y+\sqrt\ve z}$ for the law of $(\o-\g)/{\sqrt\ve}$ when $\o$ has law $\mu_\ve^{x,y+\sqrt\ve z}$.
Then $G_\ve$ and $G_0$ are continuous integrable functions on $\R^d$.
Consider the Fourier transform
$$
\hat G_\ve(\xi)=\int_{\R^d}G_\ve(z)e^{i\<\xi,z\>}dz.
$$
Then
\begin{align}\notag
\hat G_\ve(\xi)
&=\int_{\R^d}\int_\O p(\ve,x,y')e^{-F(y')/\ve}G\left(\frac{\o-\g}{\sqrt\ve}\right)\mu_\ve^{x,y'}(d\o)e^{i\<\xi,(y'-y)/\sqrt\ve\>}dy'\\\notag
&=\int_\O e^{-F(\o_1)/\ve}G\left(\frac{\o-\g}{\sqrt\ve}\right)e^{i\<\xi,(\o_1-\g_1)/\sqrt\ve\>}\mu_\ve^x(d\o)\\\notag
&=\E\left(G(\tilde x^\ve)\exp\{i\<\xi,\tilde x_1^\ve\>-F(x_1^\ve)/\ve\}\right)\\\notag
&=\E\left(G(\tilde\g^\ve)\exp\{i\<\xi,\tilde \g_1^\ve\>-F(\g_1^\ve)/\ve\}\rho_1^\ve\right)\\\label{GEP}
&=\E\left(G(\tilde\g^\ve)\exp\{i\<\xi,\tilde \g_1^\ve\>-R(\ve)\}\right)
\end{align}
and
\begin{align*}
\hat G_0(\xi)
&=\int_{\R^d}\bar p(z)e^{\<\l_1,Z_1\>/2-F''(y)(z,z)/2}\E(G(Y(z))e^{S(z)/2})e^{i\<\xi,z\>}dz\\
&=\E\left(G(Y)\exp\{i\<\xi,Y_1\>+(S+\<\l_1,Z_1\>-F''(y)(Y_1,Y_1))/2\}\right)\\
&=\E\left(G(Y)\exp\{i\<\xi,Y_1\>-f''(0)/2\}\right).
\end{align*}
Consider the limit $\ve\to0$.
We have $\tilde\g^\ve_t\to Y_t$ almost surely and in $L^p$ for all $p<\infty$, for all $t\in[0,1]$.
Now $R(\ve)\to f''(0)/2$ almost surely as $\ve\to0$ and we have the following key estimate proved by Ben Arous \cite[Lemma 3.25]{BA}: there exists $p>1$ such that
\begin{equation}\label{BKE}
\limsup_{\ve\to0}\E(e^{-pR(\ve)})<\infty.
\end{equation}
Hence $\hat G_\ve(\xi)\to\hat G_0(\xi)$ for all $\xi\in\R^d$.
We will prove the following key lemma at the end of this section.
\begin{lemma}\label{KL}
There exists a constant $C(G)<\infty$ such that, for all $\ve\in(0,1]$ and all $\xi\in\R^d$, we have
\begin{equation}\label{HRR}
|\hat G_\ve(\xi)|\le C(G)/(1+|\xi|^{d+1}).
\end{equation}
Moreover, there exists $C<\infty$ such that, uniformly in $s,t\in[0,1]$, in the case where $G(\o)=|\o_s-\o_t|^4$, 
for all $\ve\in(0,1]$ and all $\xi\in\R^d$, we have
\begin{equation}\label{GST}
|\hat G_\ve(\xi)|\le C|s-t|^2/(1+|\xi|^{d+1}).
\end{equation}
\end{lemma}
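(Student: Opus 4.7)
The plan is to start from the formula \eqref{GEP} for $\hat G_\ve(\xi)$ and combine an easy bound for $|\xi|\le 1$ with a Malliavin-calculus integration by parts for $|\xi|>1$. Throughout, all $L^p$ norms are under $\PP$ and all constants are required to be uniform in $\ve\in(0,1]$. For the bounded regime, H\"older's inequality gives
$$
|\hat G_\ve(\xi)|\le\|G(\tilde\g^\ve)\|_q\,\|e^{-R(\ve)}\|_p
$$
with $1/p+1/q=1$ and $p>1$ as in \eqref{BKE}. Standard $L^r$ moment bounds for the SDE solved by $\tilde\g^\ve$ (whose coefficients are $\cinf_b$ uniformly in $\ve$, since $\g$ is smooth and $X_0,\dots,X_m\in\cinf_b$) control $\|G(\tilde\g^\ve)\|_q$ for any cylindrical $G$ of polynomial growth, uniformly in $\ve$. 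In the special case $G(\o)=|\o_s-\o_t|^4$ the same estimate delivers a prefactor $C|s-t|^2$ via the Kolmogorov-type bound $\|\tilde\g^\ve_s-\tilde\g^\ve_t\|_{4q}^4\le C|s-t|^2$, which holds uniformly in $\ve$.

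For $|\xi|>1$, I will integrate by parts $d+1$ times in the Malliavin sense along $\tilde\g_1^\ve$. Writing $H_\ve=G(\tilde\g^\ve)e^{-R(\ve)}$ and letting $\s_\ve$ denote the Malliavin covariance matrix of $\tilde\g_1^\ve$, the iterated integration by parts formula produces, for each multi-index $\alpha$ with $|\alpha|=d+1$, a random variable $H_\ve^\alpha$ which is polynomial in the Malliavin derivatives of $H_\ve$ and $\tilde\g_1^\ve$ and in the entries of $\s_\ve^{-1}$, such that
$$
(i\xi)^\alpha\hat G_\ve(\xi)=\E\bigl(e^{i\<\xi,\tilde\g_1^\ve\>}H_\ve^\alpha\bigr).
$$
Summing over such $\alpha$ and using $\max_{|\alpha|=d+1}|\xi^\alpha|\ge c|\xi|^{d+1}$, the bound \eqref{HRR} on $|\xi|>1$ reduces to showing $\|H_\ve^\alpha\|_1\le C(G)$ uniformly in $\ve$. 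This in turn rests on three ingredients, all uniform in $\ve\in(0,1]$: (a) $\tilde\g^\ve\in\mathbb{D}^\infty$ with $L^p$-controlled Malliavin derivatives, which is standard for SDEs with $\cinf_b$ coefficients; (b) Malliavin smoothness of $R(\ve)$ and the $L^{p_0}$ bound on $e^{-R(\ve)}$ from \eqref{BKE}, which together with (a) and $F\in\cinf_b$ give uniform control of $H_\ve$ and its Malliavin derivatives; (c) uniform inverse moments $\|(\det\s_\ve)^{-1}\|_p<\infty$ for every $p<\infty$.

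The essential technical point, and the main obstacle, is (c). The strategy is to note that $\s_\ve\to u_1C_1u_1^*$ in $L^p$ as $\ve\to0$, with the limit deterministic and invertible by the non-conjugacy hypothesis \eqref{NC} (which gives $C_1$ invertible, as used just after \eqref{VEQ}). Together with $\mathbb{D}^\infty$-bounds on $\s_\ve$ that are uniform in $\ve$, a standard Malliavin non-degeneracy argument of the type used by Ben Arous \cite{BA} for the heat-kernel density yields the required inverse-moment bound, once one carries out the argument so that all constants are independent of $\ve\in(0,1]$. This is the main work: each ingredient is classical for fixed $\ve$, but one must propagate uniformity through the non-degeneracy estimate.

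Finally, \eqref{GST} follows from the same IBP scheme applied with $G(\o)=|\o_s-\o_t|^4$. The only change is in the prefactor: every Malliavin derivative of $G(\tilde\g^\ve)$ of any fixed order is a polynomial in $\tilde\g^\ve_s-\tilde\g^\ve_t$ and its Malliavin derivatives in which each term vanishes to order at least two in these differences. The H\"older-type estimate $\|\tilde\g^\ve_s-\tilde\g^\ve_t\|_p\le C|s-t|^{1/2}$ and its Malliavin analogues, all uniform in $\ve$, then produce a factor $C|s-t|^2$ in $\|H_\ve^\alpha\|_1$ which propagates through the IBP identity to yield \eqref{GST}.
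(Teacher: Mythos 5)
The overall integration-by-parts strategy in your proposal matches the paper, but there is a genuine gap in your step (c), the uniform inverse-moment bound for the Malliavin covariance matrix, and this is precisely the step the paper singles out as the delicate one.

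You propose to obtain uniform bounds on $\|c_1^{-1}\|_p$ by noting that the (rescaled) covariance matrix converges in $L^p$ to the deterministic invertible matrix $C_1$ and then invoking ``a standard Malliavin non-degeneracy argument of the type used by Ben Arous.'' This does not work as stated: $L^p$ convergence to an invertible deterministic limit does not control inverse moments. For instance, a positive random variable $Y_\ve$ equal to $1$ with probability $1-\ve$ and to $e^{-1/\ve}$ with probability $\ve$ satisfies $Y_\ve\to1$ in every $L^p$, yet $\E(Y_\ve^{-p})\to\infty$; without a quantitative lower tail on $\det c_1$ the argument collapses. The paper is explicit that Ben Arous's original treatment of exactly this point needed repair (it cites the ``corrected argument \cite{MES}''), so appealing to that argument reproduces the known gap rather than filling it. The paper instead combines two ingredients: first, since $c_1$ is a continuous functional of $(x_t,v_t)_{t\in[0,1]}$ in uniform norm and $c_1(0)$ is invertible, one has $|c_1^{-1}|\le C$ on a high-probability event $\O(\d)$ with $\PP(\O(\d)^c)\le C(\d,p)\s^p$ for every $p$; second, the polynomial-in-$\s^{-1}$ bound $\|c_1^{-1}\|_p\le C(p)\s^{-D}$ from \cite{AKS}, valid for hypoelliptic diffusions. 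Writing $|c_1^{-1}|\le C+|c_1^{-1}|1_{\O(\d)^c}$ and applying H\"older then yields $\sup_{\s\in(0,1]}\|c_1^{-1}\|_p<\infty$. Both ingredients are essential: the polynomial blow-up in $\s^{-1}$ is killed by the faster-than-any-polynomial smallness of $\PP(\O(\d)^c)$. Your proposal supplies neither the \cite{AKS}-type quantitative estimate nor the good-event/bad-event decomposition, and the sentence ``one must propagate uniformity through the non-degeneracy estimate'' acknowledges the missing work rather than performing it.

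A secondary point: your claim that $\tilde\g^\ve$ solves an SDE with coefficients ``$\cinf_b$ uniformly in $\ve$'' needs more care for the higher Malliavin derivatives and for $R(\ve)$, where apparent singularities in $\s=\sqrt\ve$ arise. The paper handles this by writing $\tilde x_t$ and $R$ as Taylor remainder integrals in an auxiliary parameter $\th\in[0,1]$ and exhibiting the whole system $(x_t(\th),\bar x_t(\th),\hat x_t(\th))$ as a graded-Lipschitz SDE with bounds uniform in $\th$ and $\s$. This is an implementation detail relative to the main gap, but your proposal glosses over it.
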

The lemma allows us to use dominated convergence in the Fourier inversion formula to deduce that
$$
G_\ve(0)=(2\pi)^{-d}\int_{\R^d}\hat G_\ve(\xi)d\xi\to(2\pi)^{-d}\int_{\R^d}\hat G_0(\xi)d\xi=G_0(0).
$$
that is to say
\begin{equation}\label{KF}
\ve^{d/2}p(\ve,x,y)e^{d(x,y)^2/(2\ve)}\int_\tgoxy G(\o)\tilde\mu_\ve^{x,y}(d\o)\to(2\pi)^{-d/2}(\det\bar C_1)^{-1/2}e^{\<\l_1,Z_1\>/2}\E(G(Y(0))e^{S(0)/2})
\end{equation}
where we used the fact that $\bar p(0)=(2\pi)^{-d/2}(\det\bar C_1)^{-1/2}$.
On taking $g=1$, we recover the heat kernel asymptotics shown by Ben Arous
\begin{equation}\label{BBF}
\ve^{d/2}p(\ve,x,y)e^{d(x,y)^2/(2\ve)}\to(2\pi)^{-d/2}(\det\bar C_1)^{-1/2}e^{\<\l_1,Z_1\>/2}\E(e^{S(0)/2}).
\end{equation}
By Theorem \ref{SRMR}, $\mg$ is the law of $Y(0)$ on $\tgoxy$ under the probability measure $\tilde\PP$, where $d\tilde\PP/d\PP\propto e^{S(0)/2}$. 
Hence, on dividing \eqref{KF} by \eqref{BBF}, we obtain
$$
\int_\tgoxy G(\o)\tmexy(d\o)\to\int_\tgoxy G(\o)\mg(d\o)
$$
from which we deduce that the finite-dimensional distributions of $\tmexy$ converge weakly to those of $\mu_\g$.
Finally, on taking $G(\o)=|\o_s-\o_t|^4$ and using the estimate \eqref{GST}, we find by Fourier inversion that 
$$
\ve^{d/2}p(\ve,x,y)e^{d(x,y)^2/(2\ve)}\int_\tgoxy|\o_s-\o_t|^4\tmexy(d\o)=G_\ve(0)\le C|s-t|^2.
$$
Since the right-hand side in \eqref{BBF} is positive, we deduce that for some $C<\infty$ we have, for all $s,t\in[0,1]$,
$$
\sup_{\ve\in(0,1]}\int_\tgoxy|\o_s-\o_t|^4\tmexy(d\o)\le C|s-t|^2.
$$
Hence, by standard arguments, the family of laws $(\tmexy:\ve\in(0,1])$ is tight on $\tgoxy$ and so $\tmexy\to\mg$ weakly as $\ve\to0$.
In particular, we also have $\mexy\to\dg$, so the proof is complete.

\begin{proof}[Proof of Lemma \ref{KL}]
The idea is to integrate by parts in \eqref{GEP}, $d+1$ times, using Malliavin calculus. 
We will vary the argument of \cite{BA} in three respects.
First, we will present the argument using Bismut's integration by parts formula \cite{MR621660,MR942019} for solutions of stochastic differential equations. 
This is more direct and may be followed in detail without knowledge of the general apparatus of Malliavin calculus. 
Second, we will use the corrected argument \cite{MES} for the uniform non-degeneracy of the Malliavin covariance matrix.
Third, we will include the simple modifications needed to go beyond the case $G=1$ which is covered in \cite{BA}.
Within the proof, we will use a few notations which conflict with usage elsewhere.

Fix $\s=\sqrt\ve>0$.
Define processes $(x_t)_{t\in[0,1]}$ in $\R^d$ and $(u_t)_{t\in[0,1]}$, $(v_t)_{t\in[0,1]}$ in $\R^d\otimes(\R^d)^*$ as the strong
solutions of the following system of stochastic differential equations
\begin{align}\label{DGE}
dx_t
&=\sum_{\ell=1}^mX_\ell(x_t)dh^\ell_t+\sum_{\ell=1}^m\s X_\ell(x_t)dB^\ell_t+\s^2\tilde X_0(x_t)dt, \q x_0=x\\\notag
du_t
&=\sum_{\ell=1}^m\nabla X_\ell(x_t)u_tdh^\ell_t+\sum_{\ell=1}^m\s\nabla X_\ell(x_t)u_tdB^\ell_t
+\s^2\nabla\tilde X_0(x_t)u_tdt, \q u_0=I\\
\label{VGE}
dv_t
&=-\sum_{\ell=1}^mv_t\nabla X_\ell(x_t)dh^\ell_t-\sum_{\ell=1}^m\s v_t\nabla X_\ell(x_t)dB^\ell_t
-\s^2v_t\left\{\nabla\tilde X_0-\sum_{\ell=1}^m(\nabla X_\ell)^2\right\}(x_t)dt, \q v_0=I.
\end{align}
Then $x_t=\g^{\s^2}_t$ and, by It\^o's formula, $v_t=u_t^{-1}$ for all $t$.
It is well known that the random variables $\sup_{t\in[0,1]}|x_t|$, $\sup_{t\in[0,1]}|u_t|$ and $\sup_{t\in[0,1]}|v_t|$ have moments of all orders, 
which are bounded uniformly in $\s\in[0,1]$.

Consider the following stochastic differential equation in $\R^N$ with $\cinf$ coefficients
\begin{equation}\label{ZT}
dz_t=\sum_{\ell=1}^mW_\ell(z_t)dh_t^\ell+\sum_{\ell=1}^mZ_\ell(z_t)dB^\ell_t+Z_0(z_t)dt,\q z_0=z.
\end{equation}
We assume that the coefficients have a graded Lipschitz structure.
By this we mean that the coefficients and all their derivatives satisfy polynomial growth bounds, 
and that there exist $k\in\N$ and $N_1,\dots,N_k\in\N$ and a decomposition $z_t=(z_t^1,\dots,z_t^k)$ such that $N_1+\dots+N_k=N$,
and, for $j=1,\dots,k$, the component $z_t^j$ takes values in $\R^{N_j}$ and, for all $\ell$, 
the corresponding components $W_\ell^j$ and $Z_\ell^j$ of the coefficients depend only on $(z^1,\dots,z^j)$,
with $\pd W_\ell^j/\pd z^j$ and $\pd Z_\ell^j/\pd z^j$ uniformly bounded.
We will allow the coefficients in \eqref{ZT} to depend measurably on $\s$ and $t$, without making this explicit in the notation, 
while assuming that the bounds in their graded Lipschitz structure hold uniformly in $\s\in[0,1]$ and $t\in[0,1]$.
This is sufficient to ensure the existence and uniqueness of a strong solution $(z_t)_{t\in[0,1]}$ to \eqref{ZT} 
such that $\sup_{t\in[0,1]}|z_t|$ has moments of all orders, bounded uniformly in $\s\in[0,1]$.
See \cite{MR942019} for more details.

Fix $i\in\{1,\dots,d\}$ and consider for $\eta\in\R$ a perturbed process $(B_t^\eta)_{t\in[0,1]}$ in $\R^m$ given by
$$
dB_t^{\eta,\ell}=dB_t^\ell+\eta(v_tX_\ell(x_t))^idt,\q B_0^\eta=0.
$$
Write $(z_t^\eta)_{t\in[0,1]}$ for the strong solution of the stochastic differential equation which is obtained when we replace $(B_t)_{t\in[0,1]}$ 
in \eqref{ZT} by this perturbed process.
We can and do choose a version of the family of processes $(z_t^\eta)_{t\in[0,1]}$ which is almost surely $\cinf$ in $\eta$.
We associate to $(z_t)_{t\in[0,1]}$ the derived process $(z_t')_{t\in[0,1]}=((z_t')^1,\dots,(z_t')^d)_{t\in[0,1]}$ in $\R^N\otimes\R^d$ given by 
\begin{equation}\label{DDE}
(z_t')^i=(\pd/\pd\eta)|_{\eta=0}z_t^\eta.
\end{equation}
Then $(z'_t)_{t\in[0,1]}$ satisfies the following stochastic differential equation in $\R^N\otimes\R^d$
\begin{equation}\label{ZPT}
dz'_t=\sum_{\ell=1}^m\nabla W_\ell(z_t)z_t'dh_t^\ell+\sum_{\ell=1}^m\nabla Z_\ell(z_t)z_t'dB^\ell_t+\nabla Z_0(z_t)z'_tdt
+\sum_{\ell=1}^mZ_\ell(z_t)\otimes(v_tX_\ell(x_t))dt.
\end{equation}
Define processes $(m_t)_{t\in[0,1]}$ in $\R$ and $(r_t)_{t\in[0,1]}$ in $\R^d$ by
\begin{align*}
dm_t
&=\sum_{\ell=1}^m\dot h_t^\ell dB^\ell_t,\q m_0=0\\
dr_t
&=\sum_{\ell=1}^mv_tX_\ell(x_t)dB^\ell_t,\q r_0=0.
\end{align*}
Write $(x_t')_{t\in[0,1]}$ for the derived process associated to the stochastic differential equation \eqref{DGE} and set
$$
y_t^{(0)}=(x_{t\wedge t_1},\dots,x_{t\wedge t_k},x_t,v_t,m_t,r_t,x_t').
$$ 
Then $(y_t^{(0)})_{t\in[0,1]}$ satisfies a stochastic differential equation of the form \eqref{ZT}.
The stopped components are obtained by multiplying the coefficients of \eqref{DGE} by the indicator function $1_{[0,t_i]}(t)$.
For $n\ge0$, recursively, set $z^{(n)}_t=(y^{(0)}_t,\dots,y^{(n)}_t)$, 
note that $(z_t^{(n)})_{t\in[0,1]}$ satisfies a stochastic differential equation of the form \eqref{ZT} with graded Lipschitz coefficients,
define $((z^{(n)})'_t)_{t\in[0,1]}$ by solving the associated derived equation, write $(z^{(n)})_t'=((y^{(0)})_t',\dots,(y^{(n)})_t')$, and set
$$
y^{(n+1)}_t=(y^{(n)})_t'.
$$
Where a process is presented in components, for example $y^{(0)}_t$, we will write the corresponding decomposition of its derived process in the obvious way.
Thus 
$$
(y^{(0)})_t'=(x_{t\wedge t_1}',\dots,x_{t\wedge t_k}',x_t',v_t',m_t',r_t',x_t''). 
$$
It is straightforward to check that this notation is consistent when a given process is a component in two different autonomous processes.
By a standard calculation, we have $x_t'=\s u_tc_t$, where $c_t$ is the Malliavin covariance matrix
\begin{equation}\label{CTM}
c_t=\sum_{\ell=1}^m\int_0^t(v_sX_\ell(x_s))\otimes(v_sX_\ell(x_s))ds.
\end{equation}
It is well known that, under the bracket condition \eqref{LAD}, the Malliavin covariance matrix $c_1$ is invertible and its inverse has moments of all orders.
The basic form of Bismut's integration by parts formula is the identity in $\R^d$
$$
\E(\nabla\phi(z_1)z_1')=\E(\phi(z_1)r_1)
$$
valid for $C_b^1$ functions $\phi$ on $\R^N$ and for $(z_t)_{t\in[0,1]}$ and $(z_t')_{t\in[0,1]}$ satisfying \eqref{ZT} and \eqref{ZPT} respectively.
Set 
$$
\tilde x_1=(x_1-\g_1)/\s,\q \tilde x_1'=x_1'/\s=u_1c_1,\q \tilde x_1''=x_1''/\s.
$$
Define random variables $y$ in $(\R^d)^*\otimes(\R^d)^*$ and $R$ in $\R$ by
$$
y=(\tilde x'_1)^{-1},\q R=(F(x_1)+\tfrac12d(x,y)^2)/\s^2+m_1/\s
$$
and define $y'$ in $(\R^d)^*\otimes(\R^d)^*\otimes\R^d$ and $R'$ in $\R^d$ by
$$
y'=-y\tilde x_1''y,\q R'=\nabla F(x_1)x_1'/\s^2+m_1'/\s.
$$
Fix $n\ge0$ and apply the integration by parts formula with $(z_t)_{t\in[0,1]}$ replaced by $(z^{(n)}_t)_{t\in[0,1]}$ 
and with $\phi(z_1)$ replaced by $f(\tilde x_1)y\phi(y,z_1^{(n)})e^{-R}$ to obtain
\begin{align*}
&\E(y\otimes(\nabla f(\tilde x_1)\tilde x_1')\phi(y,z_1^{(n)})e^{-R})
+\E(f(\tilde x_1)y'\phi(y,z_1^{(n)})e^{-R})
-\E(f(\tilde x_1)\phi(y,z_1^{(n)})(y\otimes R')e^{-R})\\
&\q+\E(f(\tilde x_1)y\otimes(\nabla_y\phi(y,z_1^{(n)})y')e^{-R})
+\E(f(\tilde x_1)y\otimes(\nabla_z\phi(y,z_1^{(n)})(z^{(n)})_1')e^{-R})\\
&\q\q=\E(f(\tilde x_1)\phi(y,z_1^{(n)})(y\otimes r_1)e^{-R}).
\end{align*}
We assume here that $f$ is $C_b^1$ and that $\phi$ is polynomial in $y$, with coefficients $C^1$ in $z_1^{(n)}$ and of polynomial growth, along with their derivatives.
Then all factors in the integrands of the preceding formula have moments of all orders.
A straightforward approximation by $C_b^1$ functions then establishes the formula.
Define a linear map $\t_i:(\R^d)^*\otimes(\R^d)^*\otimes\R^d\to\R$ by 
$$
\t_i(e_j^*\otimes e^*_{i'}\otimes e_{j'})=\d_{ii'}\d_{jj'}.
$$
Then
$$
\t_i(y\otimes(\nabla f(\tilde x_1)\tilde x_1'))=\nabla_if(\tilde x_1)
$$
so 
\begin{equation}\label{IBP}
\E(\nabla_if(\tilde x_1)\phi(y,z_1^{(n)})e^{-R})=\E(f(\tilde x_1)\nabla_i^*\phi(y,z_1^{(n+1)})e^{-R})
\end{equation}
where
\begin{align*}
\nabla_i^*\phi(y,z_1^{(n+1)})
&=\t_i(y\otimes r_1+y\tilde x''_1y+y\otimes R')\phi(y,z_1^{(n)})\\
&+\t_i(y\otimes(\nabla_y\phi(y,z_1^{(n)})y\tilde x''_1y))-\t_i(y\otimes(\nabla_z\phi(y,z_1^{(n)})(z^{(n)})_1').
\end{align*}
Take 
$$
\phi_0(y,z^{(0)}_1)=G(y,z^{(0)}_1)=G(\tilde x)=g(\tilde x_{t_1},\dots,\tilde x_{t_k}).
$$
We see inductively that \eqref{IBP} is valid for $\phi_n=\nabla^*_{i_n}\dots\nabla^*_{i_1}\phi_0$ for all $n\ge0$.
So we can iterate \eqref{IBP} to obtain, for any multi-index $\a=(i_1,\dots,i_n)$
$$
\E(\nabla^\a f(\tilde x_1)G(\tilde x)e^{-R})=\E(f(\tilde x_1)(\nabla^*)^\a G(y,z^{(n)}_1)e^{-R}).
$$
We take $f(x)=e^{i\<\xi,x\>}$ to deduce that $|\xi^\a||\hat G_\ve(\xi)|\le C_\ve(\a,G)$ where 
$$
C_\ve(\a,G)=\E(|(\nabla^*)^\a G(y,z^{(n)}_1)|e^{-R}).
$$
Now $y=c_1^{-1}v_1$ and $(\nabla^*)^\a G$ is of polynomial growth in $(y,z^{(n)}_1)$.
Given the estimate \eqref{BKE}, to prove \eqref{HRR}, it will suffice to show that, for $n=d+1$ and for all $p<\infty$, we have
\begin{equation}\label{ZPE}
\sup_{\s\in(0,1]}\E(|z^{(n)}_1|^p)<\infty
\end{equation}
and
\begin{equation}\label{CPE}
\sup_{\s\in(0,1]}\E(|c_1^{-1}|^p)<\infty.
\end{equation}

We already noted the availability of moment estimates of all orders for $z^{(n)}_1$ and that these are uniform in $\s\in[0,1]$ for the components
derived from $(v_t)_{t\in[0,1]}$ and $(r_t)_{t\in[0,1]}$.
Recall that $\tilde x_t=(x_t-\g_t)/\s$ and $R=(F(x_1)+d(x,y)^2/2)/\s^2+m_1/\s$. 
We will use first and second order mean value theorems to see that there is in fact no singularity as $\s\to0$ in these processes or any processes derived from them.
Consider, for $\th\in[0,1]$, the stochastic differential equation in $\R^d$
$$
dx_t(\th)=\sum_{\ell=1}^mX_\ell(x_t(\th))dh^\ell_t+\sum_{\ell=1}^m\th\s X_\ell(x_t(\th))dB^\ell_t+\th^2\s^2\tilde X_0(x_t(\th))dt,\q x_0(\th)=x.
$$
There exists a unique family of strong solutions $(x_t(\th))_{t\in[0,1]}$ which are almost surely jointly continuous in $\th$ and $t$.
Moreover, the following derivatives then exist for all $\th$ and $t$, almost surely
$$
\bar x_t(\th)=\frac1\s\frac\pd{\pd\th}x_t(\th),\q\hat x_t(\th)=\frac1{\s^2}\left(\frac\pd{\pd\th}\right)^2x_t(\th).
$$
Moreover, the processes $(\bar x_t(\th))_{t\in[0,1]}$ and $(\hat x_t(\th))_{t\in[0,1]}$ satisfy
\begin{align*}
d\bar x_t(\th)
&=\sum_{\ell=1}^m\nabla X_\ell(x_t(\th))\bar x_t(\th)dh^\ell_t+\sum_{\ell=1}^m\th\s\nabla X_\ell(x_t(\th))\bar x_t(\th)dB^\ell_t
+\th^2\s^2\nabla\tilde X_0(x_t(\th))\bar x_t(\th)dt\\
&\q\q+\sum_{\ell=1}^mX_\ell(x_t(\th))dB^\ell_t+2\th\s\tilde X_0(x_t(\th))dt,\q \bar x_0(\th)=0\\
d\hat x_t(\th)
&=\sum_{\ell=1}^m\nabla X_\ell(x_t(\th))\hat x_t(\th)dh^\ell_t+\sum_{\ell=1}^m\th\s\nabla X_\ell(x_t(\th))\hat x_t(\th)dB^\ell_t
+\th^2\s^2\nabla\tilde X_0(x_t(\th))\hat x_t(\th)dt\\
&\q+\sum_{\ell=1}^m\nabla^2X_\ell(x_t(\th))(\bar x_t(\th),\bar x_t(\th))dh^\ell_t
+\sum_{\ell=1}^m\th\s\nabla^2X_\ell(x_t(\th))(\bar x_t(\th),\bar x_t(\th))dB^\ell_t\\
&\q\q\q\q\q\q+\th^2\s^2\nabla^2\tilde X_0(x_t(\th))(\bar x_t(\th),\bar x_t(\th))dt\\
&\q+\sum_{\ell=1}^m2\nabla X_\ell(x_t(\th))\bar x_t(\th)dB^\ell_t+4\th\s\nabla\tilde X_0(x_t(\th))\bar x_t(\th)dt\\
&\q+2\tilde X_0(x_t(\th))dt,\q\hat x_0(\th)=0.
\end{align*}
Set $z_t(\th)=(x_t(\th),\bar x_t(\th),\hat x_t(\th))$.
Note that the process $(z_t(\th))_{t\in[0,1]}$ is the solution of a stochastic differential equation with graded Lipschitz coefficients, 
and that the coefficient bounds of the graded structure are uniform in $\th\in[0,1]$ and $\s\in[0,1]$.
Hence $(z_t(\th))_{t\in[0,1]}$ and its derived process $(z_t'(\th))_{t\in[0,1]}$ have moments of all orders, and these are bounded uniformly in $\th$ and $\s$.
Now 
$$
\tilde x_t=(x_t-\g_t)/\s=\int_0^1\bar x_t(\th)d\th
$$
and
\begin{equation}\label{EFR}
R=\int_0^1(1-\th)\{F'(x_1(\th))\hat x_1(\th)+F''(x_1(\th))(\bar x_1(\th),\bar x_1(\th))\}d\th.
\end{equation}
We can take the derivative in \eqref{DDE} under the integral sign in \eqref{EFR} to express $R'$ also by such an integral over $\th$, 
with integrand expressed in terms of $z_1(\th)$ and $z_1'(\th)$.
Hence $\sup_{t\in[0,1]}|\tilde x_t|$, $R$ and $R'$ have moments of all orders which are bounded uniformly in $\s\in(0,1]$.
The same reasoning can be extended to conclude that all processes derived from $(\tilde x_t)_{t\in[0,1]}$ and $R$ 
have moments of all orders bounded uniformly in $\s\in(0,1]$.
Alternatively, it can be observed that $(x_t,\tilde x_t',v_t/\s,\tilde x_t''/\s,m_t''/\s)$ satisfies a stochastic differential equation 
with graded Lipschitz coefficients, with bounds uniform in $\s\in(0,1]$, and from this observation we can draw the same conclusion.
Hence we have shown \eqref{ZPE}.

We turn to the proof of \eqref{CPE}.
Write $(v_t(0))_{t\in[0,1]}$ for the solution to \eqref{VGE} when $\s=0$.
Recall that
$$
c_1=\sum_{\ell=1}^m\int_0^1(v_tX_\ell(x_t))\otimes(v_tX_\ell(x_t))dt
$$
and note that $c_1$ depends continuously on $(x_t,v_t)_{t\in[0,1]}$ in uniform norm.
We have assumed that the deterministic Malliavin covariance matrix $c_1(0)$ is invertible.
Hence there are constants $C<\infty$ and $\d>0$ such that $|c_1^{-1}|\le C$ on the event
$$
\O(\d)=\{|x_t-\g_t|\le\d\q\text{and}\q|v_t-v_t(0)|\le\d\q\text{for all}\q t\in[0,1]\}.
$$
By standard $L^p$ estimates for stochastic differential equations, for all $p<\infty$, there is a constant $C(\d,p)<\infty$ such that, for all $\s\in(0,1]$,
$$
\PP(\O(\d)^c)\le C(\d,p)\s^p.
$$
On the other hand, it is shown in \cite{AKS} that, for all $p<\infty$, there are constants $C(p)<\infty$ and $D\in\N$ such that, for all $\s\in(0,1]$,
$$
\|c_1^{-1}\|_p\le C(p)\s^{-D}.
$$
Now $|c_1^{-1}|\le C+|c_1^{-1}|1_{\O(\d)^c}$ so, by H\"older's inequality, for all $p<\infty$ and all $\s\in(0,1]$,
$$
\|c_1^{-1}\|_p\le C+C(2p)C(\d,2Dp)^{1/2p}.
$$

Finally, consider the case where $G(\o)=|\o_s-\o_t|^4$ for some $s,t\in[0,1]$.
Set $\tilde x_t^{(0)}=\tilde x_t$ and, recursively for $n\ge0$, set $\tilde x_t^{(n+1)}=(\tilde x_t,(\tilde x^{(n)})'_t)$.
Then, by standard estimates, for all $p\in[1,\infty)$, there is a constant $C<\infty$ such that, uniformly in $s,t\in[0,1]$ and in $\s\in(0,1]$, we have
$$
\E\left(|\tilde x_s^{(n)}-\tilde x_t^{(n)}|^{4p}\right)\le C|s-t|^{2p}.
$$
The adjoint operators $\nabla_i^*$ have an explicit form written above, from which we deduce that, for all $n$ and for $\a=(i_1,\dots,i_n)$,
there is a random variable $K_\a$, having moments of all orders bounded uniformly in $\s\in(0,1]$, such that
$$
(\nabla^*)^\a G(y,z_1^{(n)})=K_\a|\tilde x_s^{(n)}-\tilde x_t^{(n)}|^4.
$$
Hence, by H\"older's inequality, there is a constant $C_\a<\infty$ such that, uniformly in $s,t\in[0,1]$ and in $\ve\in(0,1]$, we have
$$
C_\ve(\a,G)\le C_\a|s-t|^2.
$$
This implies \eqref{GST}.
\end{proof}

\section{Localization}\label{LL}
For $x,y\in M$ and any closed set $A$ in $M$, set
$$
d(x,A)=\inf\{d(x,z):z\in A\},\q
d(x,A,y)=\inf\{d(x,z)+d(z,y):z\in A\}.
$$
Define also the heat kernel through $A$ by
$$
p(t,x,A,y)=p(t,x,y)-p_{M\sm A}(t,x,y)
$$
where $p_{M\sm A}$ is the Dirichlet heat kernel of $\cL$ in $M\sm A$.
The following heat kernel upper bound is proved in \cite[Theorem 1.1]{BN}.
It provides a suitable estimate to deduce the second form of our main result from the first.

\begin{theorem}\label{HKUB}
Let $M$ be a connected $\cinf$ manifold.
Let $\cL$ be a second order differential operator on $M$ of the form \eqref{CFO}, whose diffusivity has a sub-Riemannian structure.
Then, for all $x,y\in M$ and any closed set $A$ in $M$ with $M\sm A$ relatively compact, we have
\begin{equation*}
\limsup_{t\to0}t\log p(t,x,A,y)\le-(d(x,A)+d(y,A))^2/2.
\end{equation*}
Moreover, if $\cL$ also satisfies the sector condition \eqref{SECT} then, for any closed set $A$ in $M$,
\begin{equation*}
\limsup_{t\to0}t\log p(t,x,A,y)\le-d(x,A,y)^2/2.
\end{equation*}
Moreover, all the above upper limits hold uniformly in $x$ and $y$ in compact
subsets of $M\sm\pd A$.
\end{theorem}

\begin{proof}[Proof of Theorem \ref{MR}]
There exists an open set $U\sse M$, which contains the minimal path $\g$, and which is compactly contained in a chart $\phi:U_0\to\R^d$ of $M$.
In the case where $d(x,y)<d(x,\infty)+d(y,\infty)$ we can and do choose $U$ so that, for $A=M\sm U$, we have
$$
d(x,y)<d(x,A)+d(y,A).
$$
It will suffice, and it will lighten the notation, to consider the case where $U_0$ is a domain both in $M$ and in $\R^d$, where $\phi$ is the identity map, and where $\th$ restricts to the identity map on $U$.
We will show that there exist vector fields $\bar X_0,\bar X_1,\dots,\bar X_{m+d}$ on $\R^d$, which are bounded with bounded derivatives of all orders and such that 
\begin{itemize} 
\item[{\rm (a)}] $(\bar X_1,\dots,\bar X_{m+d})$ is a sub-Riemannian structure on $\R^d$,
\item[{\rm (b)}] we have $a=\bar a$ and $\cL=\bar\cL$ on $U$, where
$$
\bar\cL=\frac12\sum_{\ell=1}^{m+d}\bar X_\ell^2+\bar X_0
$$
and where $a$ and $\bar a$ are the diffusivities of $\cL$ and $\bar\cL$ respectively,
\item[{\rm (c)}] $\g$ is the unique $\bar a$-minimal path in $\hxy(\R^d)$.
\end{itemize}
From these properties, given that $(x,y)$ lies outside the cut locus of $\cL$ in $M$, we deduce that $(x,y)$ also lies outside the cut locus of $\bar\cL$ in $\R^d$.
Write $\bar p$ for the heat kernel of $\bar\cL$ on $\R^d$ and write $\mexyd$ for the bridge measure on $\oxy(\R^d)$ associated to the operator $\ve\bar\cL$.
Similarly, write $\tmexyd$ for the rescaled bridge measure on $\tgoxy$, given by $\tmexyd=\mexyd\circ\bar\s_\ve^{-1}$, where $\bar\s_\ve(\o)_t=(\o_t-\g_t)/\sqrt\ve$.
Then, as we showed in Section \ref{LAP}, as $\ve\to0$, we have 
$$
\bar p(\ve,x,y)=c(x,y)\ve^{-d/2}\exp\{-d(x,y)^2/(2\ve)\}(1+o(1))
$$
and $\tmexyd\to\mg$ weakly on $\tgoxy$.

Write $\mexyU$ for the diffusion bridge measure on $\oxy(U)$ associated to the restriction of the operator $\ve\cL$ to $U$.
Then, for all measurable sets $S$ in $\oxy(M)$, we have
$$
p(\ve,x,y)\mexy(S)=p_U(\ve,x,y)\mexyU(S\cap\oxy(U))+p(\ve,x,y)\mexy(S\sm\oxy(U))
$$
and
$$
\bar p(\ve,x,y)\mexyd(S\cap\oxy(U))=p_U(\ve,x,y)\mexyU(S\cap\oxy(U)).
$$
For any bounded measurable set $B$ in $\tgoxy$ and for $\ve>0$ sufficiently small, we have $\s_\ve^{-1}(B)=\bar\s_\ve^{-1}(B)\sse\oxy(U)$, so
$$
p(\ve,x,y)\tmexy(B)=\bar p(\ve,x,y)\tmexyd(B).
$$
Now $\mexyd(\oxy(U))\to1$ so, taking $S=\oxy(M)$, we find
$$
p_U(\ve,x,y)=c(x,y)\ve^{-d/2}\exp\{-d(x,y)^2/(2\ve)\}(1+o(1)).
$$
On the other hand, by Theorem \ref{HKUB}, we have either
$$
\limsup_{\ve\to0}p(t,x,A,y)\le-(d(x,A)+d(y,A))^2/2<-d(x,y)^2/2
$$
or $\g$ is strongly minimal and $\cL$ satisfies \eqref{SECT}, so
$$
\limsup_{\ve\to0}p(t,x,A,y)\le-d(x,A,y)^2/2<-d(x,y)^2/2.
$$
In any case, we obtain
$$
p(\ve,x,y)=p_U(\ve,x,y)+p(\ve,x,A,y)=c(x,y)\ve^{-d/2}\exp\{-d(x,y)^2/(2\ve)\}(1+o(1)).
$$
Moreover, for all $B\in\tgoxy$, we then have $\tmexy(B)=\tmexyd(B)(1+o(1))$, so the claimed weak limit for $\tmexy$ follows from that for $\tmexyd$.

It remains to show the existence of vector fields $\bar X_0, \bar X_1,\dots,\bar X_{m+d}$ with the claimed properties.
Fix an open set $U_1$ such that $U$ is compactly contained in $U_1$ and $U_1$ is compactly contained in $U_0$.
There exists a $\cinf$ function $\chi$ on $\R^d$ such that $1_U\le\chi\le1$ and which is uniformly positive on $U_1$ and such that $\{\chi>0\}$ is compactly contained in $U_0$.
Write $X_0$ for the vector field on $M$ given by \eqref{HSS}.
For $\ell=0,1,\dots,m$, set $\bar X_\ell(z)=\chi(z)X_\ell(z)$ for $z\in U_0$ and set $\bar X_\ell(z)=0$ for $z\in\R^d\sm U_0$.
Then $\bar X_\ell$ is a bounded vector field on $\R^d$ with bounded derivatives of all orders.
Define $\tilde a=\sum_{\ell=1}^m\bar X_\ell\otimes\bar X_\ell$ and write $\tilde I$ for the associated energy function.
Then $\tilde I(\o)\ge I(\o)$ for all $\o\in\hxy(U_0)$ with equality if $\o$ is contained in $U$.
Moreover, by choosing $\chi$ sufficiently small near $\pd U_1$, we can and do ensure that, for all $z\in\pd U_1$ and all $\o\in H^{x,z}(U_0)$, 
we have $\tilde I(\o)>I(\g)$.
Choose now another $\cinf$ function $\bar\chi$ on $\R^d$ such that $1_{U_1}\le1-\bar\chi\le1_{U_0}$ and such that $\chi+\bar\chi$ is everywhere positive.
For $i=1,\dots,d$, set $\bar X_{m+i}(z)=\bar\chi(z)e_i$, where $e_1,\dots,e_d$ is the standard basis in $\R^d$.
Then the brackets of $\bar X_1,\dots,\bar X_m$ span $\R^d$ on $\{\chi>0\}$, while $\bar X_{m+1},\dots,\bar X_{m+d}$ themselves span $\R^d$ on $\{\bar\chi>0\}$.
Hence (a) holds.
Also, (b) holds because $\bar X_\ell=X_\ell$ on $U$ for $\ell=0,1,\dots,m$ and $\bar X_\ell=0$ on $U$ for $\ell=m+1,\dots,m+d$.
Now, if $\o\in\hxy(\R^d)$ is contained in $U_1$, then $\bar I(\o)=\tilde I(\o)\ge I(\o)$, so $\bar I(\o)\ge I(\g)$ with equality only if $\o=\g$.
On the other hand, if $\o$ is not contained in $U_1$, set $\t=\inf\{t\in[0,1]:\o_\t\in\pd U_1\}$ and set $\tilde\o_t=\o_{\t t}$.
Then $\tilde\o\in H^{x,z}(U_0)$, where $z=\o_\t\in\pd U_1$.
So $\bar I(\o)\ge \tilde I(\tilde\o)>I(\g)$.
Hence (c) holds.
\end{proof}

\def\j{
\section{Heat kernel upper bounds}\label{NFB}
Let $M$ be a connected $\cinf$ manifold.
Consider a second order differential operator $\cL$ on $M$ of the form \eqref{CFO}
$$
\cL f=\tfrac12\dvv(a\nabla f)+a(\b,\nabla f)
$$
where the diffusivity $a$ has a sub-Riemannian structure $(X_1,\dots,X_m)$,
where the divergence is understood with respect to a positive $\cinf$ locally invariant measure $\nu$ and where $\b$ is a $\cinf$ $1$-form satisfying the sector condition \eqref{SECT}.
Recall the definition \eqref{DIST} of the sub-Riemannian distance.
For a closed subset $A$ of $M$ and $x,y\in M\sm A$, consider the distance from $x$ to $y$ through $A$, given by
$$
d(x,A,y)=\inf\{\sqrt{I(\o)}:\o\in\hxy,\,\o_t\in A\text{ for some $t\in[0,1]$}\}.
$$
The heat kernel for diffusion from $x$ to $y$ through $A$ is defined by
$$
p(t,x,K,y)=p(t,x,y)-p_{M\sm K}(t,x,y)
$$
where $p_{M\sm K}$ is the Dirichlet heat kernel of $\cL$ in $M\sm K$.
We will prove in this section the following upper bound.\footnote{%
In the case where $a$ is everywhere positive-definite, there is an alternative argument of Hsu \cite{MR1089046} for heat kernel upper bounds in incomplete manifolds, 
relying  on estimates of Azencott 
\cite{MR634964}, 
which does not require the condition \eqref{SECT}. 
Hsu instead requires that $d(x,y)\le d(x,\infty)+d(\infty,y)$. 
The same paper \cite{MR1089046} also contains an example where heat flow does not localize near the shortest path.}
\begin{theorem}\label{HTK}
For all $x,y\in M$ and all closed subsets $K$ of $M$, we have
$$
\limsup_{t\to0}t\log p(t,x,K,y)\le-d(x,K,y)^2/2.
$$
\end{theorem}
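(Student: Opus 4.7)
Plan. The function $w(s, z) := p(t-s, z, K, y) = p(t-s, z, y) - p_{M\setminus K}(t-s, z, y)$ satisfies the backward parabolic equation $\partial_s w + \cL_z w = 0$ on $(0, t) \times (M \setminus K)$, with terminal value $0$ on $M\setminus K$ and Dirichlet data $p(t-s,\cdot,y)$ on $(0, t) \times K$. The strategy is to bound $w(0, x) = p(t, x, K, y)$ by constructing an explicit Gaussian-profile supersolution and applying parabolic comparison. As a preliminary ingredient one uses the pointwise upper bound
$$p(t, z, y) \le C t^{-N} \exp\Bigl(-\tfrac{d(z,y)^2 - \delta}{2t}\Bigr)$$
valid for $t$ small and $z, y$ in a compact set, which follows from the same application of Sturm's theorem invoked in the upper half of \eqref{LOGP}: the sector condition \eqref{SECT} and the locally invariant measure $\nu$ place $\cL$ in Sturm's Dirichlet-form framework even in the presence of the drift $a(\beta,\nabla\cdot)$.

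The supersolution is modelled on the Gaussian profile
$$v(s, z) = C'(t-s)^{-N'} \exp\Bigl(-\tfrac{\phi(z)^2}{2(t-s)} + \kappa(t-s)\Bigr),$$
where $\phi: M \to [0, \infty)$ is a (smoothed) Lipschitz function with $a(\nabla\phi,\nabla\phi) \le 1$. A direct computation of $\partial_s v + \cL v$ shows that the dangerous singular term $\phi^2(a(\nabla\phi,\nabla\phi) - 1)/(2(t-s)^2)$ is non-positive, and the bounded corrections involving $\operatorname{div}(a\nabla\phi)$ and $a(\beta,\nabla\phi)$ (the latter controlled by \eqref{SECT}) are absorbed by taking $N', \kappa$ large enough, making $v$ a supersolution in $(0,t)\times M$. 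The key geometric choice is
$$\phi(z) = \max\bigl(d(x,K,y) - \epsilon - d(x, z),\, 0\bigr).$$
Then $\phi(x) = d(x,K,y) - \epsilon$, recovering the target exponent at $(0,x)$; and for $z \in K$, the definitional inequality $d(x,K,y) \le d(x,z) + d(z,y)$ yields $\phi(z) \le d(z,y)$, so the preliminary Gaussian bound on $p(t-s,z,y)$ gives $v \ge w$ on the lateral boundary after enlarging $C'$.

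The comparison on the (possibly non-compact) domain $(0, t) \times (M \setminus K)$ is carried out by exhausting $M$ by relatively compact open subdomains and controlling $w$ at infinity via the global Sturm bound (together with the sector condition, which prevents the drift from destroying the Gaussian decay). The conclusion $v \ge w$ at $(0, x)$ reads
$$p(t, x, K, y) \le C' t^{-N'} \exp\Bigl(-\tfrac{(d(x, K, y) - \epsilon)^2}{2t} + \kappa t\Bigr);$$
taking $t \log$, passing to $\limsup_{t\to 0}$, and finally letting $\epsilon \downarrow 0$ gives the theorem.

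Main obstacle. The principal technical step is justifying the supersolution inequality when $\phi$, built from the sub-Riemannian distance, is merely Lipschitz: this is handled by smoothing $\phi$ while preserving its Lipschitz constant with respect to $d$ up to an arbitrarily small error, and by a local computation on a compactly contained region where the bounded corrections from $\operatorname{div}(a\nabla\phi)$ and $a(\beta,\nabla\phi)$ are uniformly controlled. A secondary issue is the legitimacy of the parabolic maximum principle on the non-compact $M\setminus K$, which is resolved by the exhaustion procedure and the global Gaussian decay of $w$ supplied by Sturm's bound.
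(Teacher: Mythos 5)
Your approach is genuinely different from the paper's. The paper does not construct a pointwise supersolution; it runs an $L^2$ Davies--Gaffney argument with an exponential Lipschitz weight on a \emph{doubled} manifold $\tilde M=M^-\cup M^+$ (a reflection trick that converts the through-$K$ kernel into an ordinary kernel on $\tilde M$), then converts the resulting $L^2$ Gaussian decay to a pointwise bound via Sturm's parabolic mean-value inequality, and finally substitutes the dual characterisation of $d(x,K,y)$ in terms of Lipschitz test functions (Proposition \ref{DUAL}). That architecture is chosen precisely to avoid the obstruction your proposal runs into.

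The main gap is the regularity of $\phi$. Your supersolution inequality $\partial_s v + \cL v\le0$ requires, in the coefficient of $1/(t-s)$, control of $\phi\,\dvv(a\nabla\phi)$. The sub-Riemannian distance is only locally Lipschitz, and there is nothing like a Laplacian comparison theorem in this setting to bound $\dvv(a\nabla d(x,\cdot))$ from above or below; indeed along abnormal minimisers the distance is not even $C^1$. Mollification does not fix this: if you smooth at scale $\rho$ while keeping $a(\nabla\phi,\nabla\phi)\le1+o(1)$, the second-order term $\dvv(a\nabla\phi)$ will generically be of size $1/\rho$, and there is no way to choose $\rho\to0$ as $t\to0$ so that this term is ``absorbed by taking $N',\kappa$ large''. (In fact the sign is the wrong way round: where $\phi$ vanishes, $\nabla\phi=0$ and the $1/(t-s)$ coefficient reduces to $N'$, so one is forced to take $N'\le0$, not large, and then one needs a genuine lower bound on $\dvv(a\nabla\phi)$ elsewhere.) This is exactly why the modern route to Gaussian upper bounds in rough or hypoelliptic settings is the integrated ($L^2$ energy) maximum principle of Davies--Gaffney--Grigor'yan: it needs only $a(\nabla\psi,\nabla\psi)\le1$ and never touches $\dvv(a\nabla\psi)$. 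The paper's argument is an instance of this, plus the mean-value estimate (which rests on Nagel--Stein--Wainger volume comparison and Jerison's Poincar\'e inequality) to upgrade $L^2$ to pointwise.

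Two secondary points. First, the ``preliminary Gaussian bound'' $p(t,z,y)\le Ct^{-N}\exp\{-(d(z,y)^2-\d)/(2t)\}$ that you invoke on the lateral boundary is, in the paper, the $K=M$ case of the very theorem being proved; to supply it ``directly from Sturm'' you would in effect have to reproduce the dual-distance lemma and the parabolic mean-value machinery internally, at which point your pointwise barrier adds a second, unnecessary, and technically more fragile layer on top. Second, the paper does not assume $M$ is complete for $d$; your exhaustion argument and your appeal to ``global Sturm bounds at infinity'' do not engage with incompleteness, whereas the paper's $L^2$ energy inequality is stated with compactly supported test functions $f^\pm\in\cinf_c(M)$ and therefore never sees the boundary at infinity.
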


Before giving the proof, we show how this estimate allows us to prove Theorems \ref{VAR}, \ref{BALOC} and \ref{MR}.

\begin{proof}[Proof of Theorem \ref{VAR}]
The upper bound in \eqref{LOGP} is the simple case $K=M$ of Theorem \ref{HTK}, 
while the lower bound follows by a standard argument from L\'eandre's lower bound in $\R^d$.

We turn to the proof of \eqref{LDEV}. 
Consider the operator 
$$
\tilde\cL=\cL+\tfrac12(\pd/\pd\t)^2
$$ 
on $\tilde M=M\times\R$, where $\t$ denotes the coordinate in $\R$.
Then the diffusivity of $\tilde\cL$ also has a sub-Riemannian structure.
Set $\tilde x=(x,0)$ and $\tilde y=(y,1)$, and define
$$
\tilde K=\tilde M\sm\tilde U,\q \tilde U=\{(\g_t,\s_t):(\g,\s)\in\tilde\G(\d),\,t\in[0,1]\}
$$
where
$$
\tilde\G(\d)=\left\{(\g,\s)\in\hxy\times H^{0,1}(\R):I(\g)+\int_0^1\dot\s_t^2dt<d(x,y)^2+1+\d\right\}.
$$
Then $\tilde K$ is closed in $\tilde M$.
Write $\b^{0,1}_\ve$ for the law on $\O^{0,1}(\R)$ of a Brownian bridge from $0$ to $1$ of speed $\ve$.
Then, with obvious notation,
$$
\tilde p(t,\tilde x,\tilde y)=p(t,x,y)\frac1{\sqrt{2\pi}}e^{-1/(2t)},\q\tilde\mu^{\tilde x,\tilde y}_\ve(d\o,d\t)=\mexy(d\o)\b_\ve^{0,1}(d\t).
$$
By Theorem \ref{HTK}, we have
$$
\limsup_{t\to0}t\log\tilde p(t,\tilde x,\tilde K,\tilde y)\le-\tilde d(\tilde x,\tilde K,\tilde y)^2/2=-(d(x,y)^2+1+\d)/2
$$
so
\begin{align}\notag
&\limsup_{\ve\to0}\ve\log\tilde\mu_\ve^{\tilde x,\tilde y}(\{(\o,\t):(\o_t,\t_t)\in\tilde K\text{ for some }t\in[0,1]\})\\
\label{LD1}
&\q\q\q\q\le\limsup_{\ve\to0}\ve\log\tilde p(\ve,\tilde x,\tilde K,\tilde y)-\liminf_{\ve\to0}\ve\log\tilde p(\ve,\tilde x,\tilde y)\le-\d/2
\end{align}
where we have used the lower bound from \eqref{LOGP}.
By standard estimates, we also have
\begin{equation}\label{LD2}
\lim_{\ve\to0}\ve\log\b_\ve^{0,1}(\{\t:|\t_t-t|\ge\sqrt\d/2\text{ for some }t\in[0,1]\})=-\d/2.
\end{equation}
Suppose then that $\o\in\oxy$ and $\t\in\O^{0,1}(\R)$ satisfy $(\o_t,\t_t)\in\tilde U$ and $|\t_t-t|<\sqrt\d/2$ for all $t\in[0,1]$.
Then, for each $t\in[0,1]$, there exist $s\in[0,1]$ and $\g\in\hxy$ and $\s\in H^{0,1}(\R)$ such that 
$$
\o_t=\g_s,\q \t_t=\s_s,\q I(\g)<d(x,y)^2+\d,\q \int_0^1\dot\s_r^2dr<1+\d.
$$
Then $|\s_s-s|\le\sqrt\d/2$ so $|t-s|\le\sqrt\d$ and so 
$$
d(\o_t,\G_t(\d))^2\le d(\o_t,\g_t)^2=d(\g_s,\g_t)^2\le|t-s|I(\g)\le\d^{1/2}(d(x,y)^2+\d).
$$
The estimates \eqref{LD1} and \eqref{LD2} thus imply \eqref{LDEV}.
\end{proof}  

\begin{proof}[Proof of Theorems \ref{BALOC} and \ref{MR}]
The hypothesis that $\g\in\hxy$ is strongly minimal implies that, for any domain $U$ containing $\g$, we have $d(x,M\sm U,y)>d(x,y)$.
We can choose a chart $U_0$ in $M$ and a domain $U$ containing $\g$ such that $U$ is compactly contained in $U_0$.
We suppose that $\cL$ has the form \eqref{CFO} and satisfies the sector condition \eqref{SECT}.
Then Theorem \ref{HTK} applies to show that
$$
\limsup_{t\to0}t\log p(t,x,M\sm U,y)\le-d(x,M\sm U,y)^2/2<-d(x,y)^2/2
$$
and Theorem \ref{NFBT} then gives the desired conclusions.
\end{proof}

In proving Theorem \ref{HTK}, we will use the following estimate of Nagel, Stein \& Wainger \cite{MR793239} for the sub-Riemannian distance.
There is a covering of $M$ by relatively compact charts $\phi:U\to\R^d$ such that, for some constants $\a(U)>0$ and $C(U)<\infty$, for all $x,y\in U$, we have
$$
C^{-1}|\phi(x)-\phi(y)|\le d(x,y)\le C|\phi(x)-\phi(y)|^\a.
$$
If follows that, for any positive $\cinf$ measure $\nu$ on $M$, we can choose $C(U)$ so that moreover, 
for all $x\in U$ and all $r\in(0,\infty)$ such that $B(x,r)=\{y\in M:d(x,y)<r\}\sse U$, we have
\begin{equation}\label{VE}
C^{-1}r^{d/\a}\le\nu(B(x,r))\le Cr^d
\end{equation}
Moreover, Nagel, Stein \& Wainger show also that $C(U)$ may be chosen so that the following local volume-doubling inequality holds: 
for all $x\in U$ and all $r\in(0,\infty)$ such that $B(x,2r)\sse U$, we have
\begin{equation}\label{DP}
\nu(B(x,2r))\le C\nu(B(x,r)).
\end{equation}
We will also need the local Poincar\'e inequality proved by Jerison \cite{MR850547}, which also relies on the fact that $a$ has a sub-Riemannian structure.
There is a covering of $M$ by open sets $U$ such that, for some constant $C(U)<\infty$, 
for all $x\in U$ and all $r\in(0,\infty)$ such that $B(x,2r)\sse U$, for all $f\in\cinf_c(M)$, we have
\begin{equation}\label{PI}
\int_{B(x,r)}|f-\<f\>_{B(x,r)}|^2d\nu\le Cr^2\int_{B(x,2r)}a(\nabla f,\nabla f)d\nu
\end{equation}
where $\<f\>_B=\int_Bfd\nu/\nu(B)$ is the average value of $f$ on $B$.
The sector condition \eqref{SECT} gives us uniform parabolicity in the sense of Sturm, so \eqref{DP} and \eqref{PI} allow us to apply \cite[Theorem 2.1]{MR1355744}
to obtain the following parabolic mean-value estimate.
For simplicity, we will state a simple version adequate for our needs.
Let $x\in M$ and let $r_0\in(0,\infty)$.
Let $D$ be a domain in $M$ which compactly contains the ball $B(x,r_0)$.
Suppose that $u$ is a non-negative weak solution of the heat equation $(\pd/\pd t)u_t=\cL u_t$ on $(0,\infty)\times D$.
Then there is constant $C(D,r_0)<\infty$ such that, for all $t\in(0,\infty)$ and all $r\in(0,r_0]$ with $r^2\le t/2$, we have
\begin{equation}\label{MVE}
u_t(x)^2\le C\fint_{t-r^2}^t\fint_{B(x,r)}u_s^2d\nu ds.
\end{equation}
See Lierl and Saloff-Coste \cite[Theorem 2.6]{1205.6493} for an alternative proof, which clarifies some points in the non-symmetric case.

The sub-Riemannian distance has a dual formulation, proved in Jerison \& Sanchez-Calle \cite{MR922334}, 
which we adapt to the case of a general sub-Riemannian manifold, without completeness, and to the distance through $K$.
\begin{proposition}\label{DUAL}
For all $x,y\in M$ and any closed subset $K$ of $M$, we have
$$
d(x,K,y)=\sup\{w^+(y)-w^-(x):w^-,w^+\in\cF\text{ with $w^+=w^-$ on $K$}\}
$$
where $\cF$ is the set of all locally Lipschitz functions $w$ on $M$ such that $a(\nabla w,\nabla w)\le1$ almost everywhere.
\end{proposition}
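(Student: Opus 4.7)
Write $S$ for the supremum on the right-hand side. The plan is to prove the two inequalities separately, with the lower bound requiring a Riemannian regularization argument.

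For the upper bound $S \le d(x,K,y)$, fix $w^\pm \in \cF$ agreeing on $K$, and let $\o \in \hxy$ have driving path $\xi$ and pass through $K$ at some $t^* \in [0,1]$. Since $w^+(\o_{t^*}) = w^-(\o_{t^*})$, I split
$$
w^+(y) - w^-(x) = \int_0^{t^*}\<\nabla w^-(\o_r),a(\o_r)\xi_r\>\,dr + \int_{t^*}^1\<\nabla w^+(\o_r),a(\o_r)\xi_r\>\,dr,
$$
the chain rule being valid a.e.\ as $w^\pm$ are locally Lipschitz and $\o$ is absolutely continuous. Cauchy--Schwarz on the cotangent form $a$ gives $|\<\nabla w,a\xi\>| \le \sqrt{a(\nabla w,\nabla w)}\sqrt{a(\xi,\xi)} \le \sqrt{a(\xi,\xi)}$, and a further Cauchy--Schwarz in time then yields $w^+(y) - w^-(x) \le \int_0^1\sqrt{a(\xi_r,\xi_r)}\,dr \le \sqrt{I(\o)}$. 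Taking the infimum over admissible $\o$ and supremum over $w^\pm$ gives the claim.

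For the reverse inequality, the natural candidates
$$
w^-(z) = d(x,z), \qquad w^+(z) = \inf_{k \in K}\bigl(d(x,k) + d(k,z)\bigr)
$$
agree on $K$ (take $k = z$ in the infimum and apply the triangle inequality) and satisfy $w^+(y) - w^-(x) = d(x,K,y)$. By the Nagel--Stein--Wainger estimates \cite{MR793239}, however, $d(x,\cdot)$ is generally only locally H\"older in chart coordinates, so these candidates need not lie in $\cF$. I therefore regularize via a Riemannian approximation: fix a smooth positive-definite Riemannian cotangent form $\rho$ on $M$ and set $a_\ve = a + \ve^2\rho$, with associated Riemannian distance $d_\ve \le d$. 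Replacing $d$ by $d_\ve$ in the definitions above yields $w^\pm_\ve$ which are $1$-Lipschitz with respect to $d_\ve$, hence locally Lipschitz in charts, and which satisfy $a(\nabla w^\pm_\ve,\nabla w^\pm_\ve) \le a_\ve(\nabla w^\pm_\ve,\nabla w^\pm_\ve) \le 1$ almost everywhere. Thus $w^\pm_\ve \in \cF$ with $w^+_\ve = w^-_\ve$ on $K$, and $w^+_\ve(y) - w^-_\ve(x) = d_\ve(x,K,y)$, so $S \ge d_\ve(x,K,y)$ for every $\ve > 0$.

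The main obstacle is to establish that $d_\ve(x,K,y) \to d(x,K,y)$ as $\ve \to 0$, after which letting $\ve \to 0$ in the displayed inequality completes the proof. The inequality $d_\ve(x,K,y) \le d(x,K,y)$ is automatic. For the reverse, the identity $d_\ve(x,K,y) = \inf_{k \in K}(d_\ve(x,k) + d_\ve(k,y))$ (from splitting paths at the first time they meet $K$) reduces matters to the pointwise monotone convergence $d_\ve(x,k) \uparrow d(x,k)$, which holds because the $a_\ve$-length of any fixed horizontal path increases to its $a$-length as $\ve \downarrow 0$, and to swapping the limit with the infimum over $k \in K$. The local volume and H\"older estimates of \cite{MR793239} allow near-optimal basepoints $k \in K$ to be localized to a compact subset of $M$ (combining boundedness of $d(x,\cdot) + d(\cdot,y)$ near the infimum with the fact that $d$ is compatible with the manifold topology), after which Dini's theorem for monotone convergence of continuous functions on a compact set gives the required exchange.
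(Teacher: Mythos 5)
Your upper bound is essentially the paper's, just without the mollification step: the paper first approximates each locally Lipschitz $w^\pm$ by $\cinf$ functions $f^\pm$ on a chart with $a(\nabla f^\pm,\nabla f^\pm)\le 1+\ve$, then applies the smooth chain rule; you invoke the chain rule for Lipschitz-composed-with-absolutely-continuous directly. Either is fine. The setup for the lower bound is also the same (your $w_\ve^-(z)=d_\ve(x,z)$ and $w_\ve^+(z)=\inf_{k\in K}(d_\ve(x,k)+d_\ve(k,z))$ are exactly the paper's $d_\a(x,z)$ and $d_\a(x,K,z)$). The difference is entirely in how the convergence $d_\ve(x,K,y)\to d(x,K,y)$ is established, and this is where your proof has a genuine gap.

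First, your justification for the pointwise convergence $d_\ve(x,k)\uparrow d(x,k)$ is backwards. That ``the $a_\ve$-length of a fixed horizontal path increases to its $a$-length'' only shows $\sup_\ve d_\ve\le d$, which was already known from $a_\ve\ge a$. To show $\sup_\ve d_\ve=d$, one must rule out that, as $\ve\to0$, non-horizontal $a_\ve$-paths of length bounded strictly below $d$ persist; this is the content of the (nontrivial) Riemannian approximation theorem, and in the non-complete setting it is not obviously available because near-optimal $a_\ve$-paths can escape every compact subset of $M$. Second, and more decisively, your localization of near-optimal $k\in K$ to a compact subset rests on $d$-boundedness together with compatibility of $d$ with the manifold topology; but in a manifold not complete for $d$, $d$-bounded sets need not be precompact (Hopf--Rinow fails), so this localization simply is not justified. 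Proposition \ref{DUAL} is stated without any completeness hypothesis, is used in Theorem \ref{HTK} without one, and the paper explicitly notes it is adapting Jerison--S\'anchez-Calle ``to the case of a general sub-Riemannian manifold, without completeness.'' Third, even granting both points, Dini's theorem would give uniform convergence of $d_\ve(x,\cdot)+d_\ve(\cdot,y)$ on a compact $C\sse K$, but you would still have to exclude the possibility that, for small $\ve$, the infimum over $K$ of the $\ve$-functional is nearly attained outside $C$; since $d_\ve\le d$ can be small where $d$ is large, this does not follow from the localization for $d$ alone.

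The paper's proof is built precisely to sidestep all three issues: rather than a uniform perturbation $a+\ve^2\rho$, it constructs, for each $\ve$, a positive-definite $\cinf$ quadratic form $\a$ that is made small in a prescribed way on each member of a nested sequence of compact sets $S_n$ exhausting $M$. A near-optimal $a+\a$-path through $K$ is decomposed recursively into pieces lying in annuli $S_{n+1}\sm S_{n-1}$; on each piece the $\a$-perturbation is chosen so small that the piece is within $2^{-2n-2}\ve$ of a horizontal path, and a counting argument shows the total error sums to $O(\ve\|\dot h\|_\infty)$. This yields $d(x,K,y)\le(1+\ve)(d_{a+\a}(x,K,y)+\ve)+\ve$ directly, without appealing to any general $d_\ve\to d$ statement and without any compactness of $K$ or of $d$-balls. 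If you want to retain your uniform-regularization strategy, you would need to either add a completeness hypothesis (weakening the proposition) or independently prove a global version of $d_\ve\to d$ on non-complete $M$, which is essentially the hard part the paper's construction replaces.
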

\begin{proof}
Denote the right hand side by $\d(x,K,y)$ for now.
Suppose that $\o\in\hxy$ has driving path $\xi$ and that $\o_t\in K$.
Let $w^-,w^+\in\cF$, with $w^+=w^-$ on $K$.
It will suffice to consider the case where $\o$ is simple, when there exists a relatively compact chart $U$ for $M$ containing $\o$.
Then, given $\ve>0$, since $a$ is continuous, we can find $\cinf$ functions $f^-,f^+$ on $U$ such that $|f^\pm(z)-w^\pm(z)|\le\ve$ and $a(\nabla f^\pm,\nabla f^\pm)(z)\le1+\ve$ for all $z\in U$.
Then
$$
w^+(y)-w^-(x)=w^+(y)-w^+(\o_t)+w^-(\o_t)-w^-(x)\le f^+(y)-f^+(\o_t)+f^-(\o_t)-f^-(x)+4\ve
$$
and
\begin{align*}
&f^+(y)-f^+(\o_t)+f^-(\o_t)-f^-(x)\\
&\q\q=\int_0^t\<\nabla f^-(\o_s),\dot\o_s\>ds+\int_t^1\<\nabla f^+(\o_s),\dot\o_s\>ds\\
&\q\q=\int_0^t\<\nabla f^-(\o_s),a(\o_s)\xi_s\>ds+\int_t^1\<\nabla f^+(\o_s),a(\o_s)\xi_s\>ds\\
&\q\q\le\left(\int_0^ta(\nabla f^-,\nabla f^-)(\o_s)ds+ \int_t^1a(\nabla f^+,\nabla f^+)(\o_s)ds\right)^{1/2}\left(\int_0^1a(\xi_s,\xi_s)ds\right)^{1/2}\\
&\q\q\le\sqrt{(1+\ve)I(\o)}.
\end{align*}
Hence $w^+(y)-w^-(x)\le\sqrt{I(\o)}$.
On taking the supremum over $w^\pm$ and the infimum over $\o$, we deduce that $\d(x,K,y)\le d(x,K,y)$.

Now we prove the reverse inequality.
Choose a $\cinf$ quadratic form $\a$ on $T^*M$ which is everywhere positive-definite.
Consider the energy function $I_\a$ and the distance functions $d_\a$ and $\d_\a$ obtained by replacing $a$ by $a+\a$ in the definitions of $d$ and $\d$.
Set $w^+(z)=d_\a(x,K,z)$ and $w^-(z)=d_\a(x,z)$.
Note that $w^+=w^-$ on $K$.
Since $a+\a$ is positive-definite, the functions $w^-$ and $w^+$ are locally Lipschitz, and their weak gradients $\nabla w^\pm$ satisfy $(a+\a)(\nabla w^\pm,\nabla w^\pm)\le1$ almost everywhere.
Hence 
$$
d_\a(x,K,y)=w^+(y)-w^-(x)\le\d_\a(x,K,y)\le\d(x,K,y)
$$
and we can complete the proof by finding, for each $\ve\in(0,1]$, a choice of $\a$ so that $d(x,K,y)\le(1+\ve)(d_\a(x,K,y)+\ve)+\ve$.

Choose $\phi^\a\in\oxy$, passing through $K$ and such that $\sqrt{I_\a(\phi^\a)}\le d_\a(x,K,y)+\ve$.
There exist $p\in\N$ and $\cinf$ vector fields $Y_1,\dots,Y_p$ on $M$ such that $\a=\sum_{i=1}^pY_i\otimes Y_i$.
Then there exist $h\in\hrm$ and $k\in\hrp$ such that $\phi^\a$ has weak derivative\footnote{%
See the discussion preceding Proposition \ref{2.1}.}
$$
\dot\phi_t^\a=\sum_{\ell=1}^mX_\ell(\phi^\a_t)\dot h^\ell_t+\sum_{i=1}^pY_i(\phi_t^\a)\dot k_t^i.
$$
By reparametrizing $\phi^\a$ if necessary, we may assume that $|\dot h_t|^2+|\dot k_t|^2=I_\a(\phi^\a)$ for almost all $t$.
It will be convenient to assume that $d(x,\infty)\ge1/2$, which we may do without loss of generality by a scaling argument.
Define $S_{-1}=S_0=\es$ and consider for $n\ge1$ the compact set
$$
S_n=\{z\in M:d(z,x)\le 2^n\text{ and }d(z,\infty)\ge2^{-n}\}.
$$
Define, recursively, sequences of times $t_0,t_1,\dots,t_J$, points $x_1,\dots,x_J$ and $y_1,\dots,y_J$, and positive integers $n_1,\dots,n_J$ as follows.  
Set $t_0=0$ and $n_1=1$.
For $j\ge1$, set $x_j=\phi^\a_{t_{j-1}}$, define $(\phi^j_t)_{t\ge t_{j-1}}$ by the differential equation
$$
\dot\phi_t^j=\sum_{\ell=1}^mX_\ell(\phi_t^j)\dot h^\ell_t,\q\phi_{t_{j-1}}^j=x_j
$$
and set $y_j=\phi_{t_j}^j$, where
$$
t_j=\inf\{t\ge t_{j-1}:\phi^j_t\in\pd S_{n_j-1}\cup\pd S_{n_j+1}\}\wedge1.
$$
If $t_j<1$, then set 
$$
n_{j+1}=
\begin{cases}
n_j-1,&\text{if }y_j\in\pd S_{n_j-1},\\
n_j+1,&\text{if }y_j\in\pd S_{n_j+1}.
\end{cases}
$$
Thus $y_j\in\pd S_{n_{j+1}}$.
If $t_j=1$, then set $J=j$ and stop.
We will see below that the recursion does stop.
Note that, for all $j$ and all $s,t\in[t_{j-1},t_j]$ with $s\le t$, we have
$$
d(\phi_s^j,\phi_t^j)\le(t-s)\|\dot h\|_\infty.
$$
Note also that $x_1=\phi^\a_0=x\in S_{n_1}\sse S_{n_1+1}\sm S_{n_1-1}$ and $\phi_{t_{j-1}}^\a=\phi_{t_{j-1}}^j=x_j$ for all $j$.
Suppose, inductively for $j\ge1$, that $x_j\in S_{n_j+1}\sm S_{n_j-1}$.
Then $\phi^j_t$ remains in this set for $t\in[t_{j-1},t_j)$ and, by choosing $\a$ sufficiently small on $S_{n_j+2}\sm S_{n_j-2}$, we can ensure that 
$$
d(\phi_t^j,\phi_t^\a)\le2^{-2n_j-2}\ve\le2^{-n_{j+1}-2},\q t\in[t_{j-1},t_j]. 
$$
Then, in particular, we have $d(y_j,x_{j+1})\le 2^{-n_{j+1}-2}$.
Now $y_j\in\pd S_{n_{j+1}}$ so this implies that $x_{j+1}\in S_{n_{j+1}+1}\sm S_{n_{j+1}-1}$ and the induction proceeds.
Also, since $y_{j+1}\in\pd S_{n_{j+2}}$, we have $d(y_j,y_{j+1})\ge2^{-n_{j+1}-1}$ and so $d(x_{j+1},y_{j+1})\ge2^{-n_{j+1}-2}$.
Certainly $d(x_1,y_1)\ge2^{-n_1-2}$ so, for all $j\ge1$, we have $d(x_j,y_j)\ge2^{-n_j-2}$.
Hence
$$
(t_j-t_{j-1})\|\dot h\|_\infty\ge2^{-n_j-2}
$$
and so, for all $n\ge1$, we have $n_j=n$ at most $2^{n+2}\|\dot h\|_\infty$ times.
In particular, the recursion must stop, or $n_j\to\infty$ as $j\to\infty$ forcing $\phi_{t_j}^\a$ to leave all compact sets, which is impossible.
Hence
$$
\sum_{j=1}^{J-1}d(y_j,x_{j+1})+d(y_J,y)\le\sum_{n=1}^\infty2^{n+2}\|\dot h\|_\infty.2^{-2n-2}\ve=\ve\|\dot h\|_\infty.
$$
Also, there exist $j$ and $t$ so that $t\in[t_{j-1},t_j]$ and $\phi_t^\a\in K$, and then
$$
d(x_j,K,y_j)\le d(x_j,\phi_t^j)+2d(\phi_t^j,\phi_t^\a)+d(\phi_t^j,y_j)\le(t_j-t_{j-1})\|\dot h\|_\infty+\ve
$$
while, for $k\not=j$, we have the estimate
$$
d(x_k,y_k)\le(t_k-t_{k-1})\|\dot h\|_\infty.
$$
We finally combine these estimates, using the triangle inequality, to obtain
$$
d(x,K,y)\le(1+\ve)\|\dot h\|_\infty+\ve\le(1+\ve)(d_\a(x,K,y)+\ve)+\ve.
$$
\end{proof}

\begin{proof}[Proof of Theorem \ref{HTK}]
We will show that the argument used in \cite[Theorem 1.2]{MR1484769}, for the case where $a$ is positive-definite and $\b=0$, generalizes to the present context\footnote{%
The idea is to combine a standard argument for heat kernel upper bounds with a reflection trick. 
In terms of Markov processes, we give a random sign to each excursion of the diffusion process into $D$, viewing it as taking values in $D^-$ or $D^+$.
Then a generalization of the classical reflection principle for Brownian motion allows to express the density for paths from $x$ to $y$ via $K$ in terms of this enhanced process.
In fact the heat kernel $\tilde p$ for this process may be written in terms of $p$ and $p_D$, and we find it technically simpler to define $\tilde p$ in those terms, rather than set up the enhanced process.}
.
Consider the set $\tilde M=M^-\cup M^+$, where $M^\pm=K\cup D^\pm$ and $D^-,D^+$ are disjoint copies of $D=M\sm K$. 
Write $\pi$ for the obvious projection $\tilde M\to M$.
For functions $f$ defined on $M$, we will write $f$ also for the function $f\circ\pi$ on $\tilde M$.
Thus we will sometimes consider $a$ as a quadratic form on $T^*D^\pm$ and $\b$ as a $1$-form on $D^\pm$.
Define a measure $\tilde\nu$ on $\tilde M$ by
$$
\tilde\nu(A)=\nu(A\cap K)+\tfrac12\nu(\pi(A\cap D^-))+\tfrac12\nu(\pi(A\cap D^+)).
$$
Note that $\nu=\tilde\nu\circ\pi^{-1}$.
Now define
$$
\tilde p(t,x,y)=
\begin{cases}
p(t,x,y)+p_D(t,x,y),&\text{ if }x,y\in D^\pm,\\
p(t,x,y)-p_D(t,x,y),&\text{ if }x\in D^\pm\text{ and }y\in D^\mp,\\
p(t,x,y),&\text{ if }x\in K\text{ or }y\in K.
\end{cases}
$$
Given functions $f^-,f^+\in\cinf_c(M)$ with $f^-=f^+$ on $K$, write $f$ for the function on $\tilde M$ such that $f=f^\pm\circ\pi$ on $M^\pm$,
and set $\bar f=(f^-+f^+)/2$ and $f^D=(f^+-f^-)/2$.
Let $\phi^\pm\in\cinf_c(M)$ with $\phi^-=\phi^+$ on $K$ and define $\phi$ on $\tilde M$ and $\bar\phi$ and $\phi^D$ on $M$ similarly.
Define for $t\in(0,\infty)$ functions $u_t$ on $\tilde M$, $\bar u_t$ on $M$ and $u^D_t$ on $D$ by
$$
u_t(x)=\int_{\tilde M}\tilde p(t,x,y)f(y)\tilde\nu(dy)
$$
and
$$
\bar u_t(x)=\int_Mp(t,x,y)\bar f(y)\nu(dy),\q u^D_t(x)=\int_Mp_D(t,x,y)f^D(y)\nu(dy).
$$
Then $\bar u_t$ and $u^D_t$ solve the heat equation with Dirichlet boundary conditions in $M$ and $D$ respectively.
It is straightforward to check that $u_t=u^\pm_t\circ\pi$ on $M^\pm$, where $u_t^\pm=\bar u_t\pm u_t^D$ and we extend $u_t^D$ by $0$ on $K$.
Hence
$$
\int_{\tilde M}\phi u_td\tilde\nu=\int_M\bar\phi\bar u_td\nu+\int_D\phi^Du_t^Dd\nu
$$
and so
\begin{align}
\notag
&\frac{d}{dt}\int_{\tilde M}\phi u_td\tilde\nu
=\frac{d}{dt}\int_M\bar\phi\bar u_td\nu+\frac{d}{dt}\int_D\phi^D u_t^Dd\nu\\
\notag
&=-\frac12\int_Ma(\nabla\bar\phi,\nabla\bar u_t)d\nu+\int_Ma(\bar\phi\b,\nabla\bar u_t)d\nu
-\frac12\int_Da(\nabla\phi^D,\nabla u_t^D)d\nu+\int_Da(\phi^D\b,\nabla u_t^D)d\nu\\
\label{WHE}
&=-\frac12\int_{\tilde M}a(\nabla\phi,\nabla u_t)d\tilde\nu+\int_{\tilde M}a(\phi\b,\nabla u_t)d\tilde\nu.
\end{align}

Fix $x,y\in D$ and $r_0\in(0,\infty)$ so that the balls $B(x,r_0)$ and $B(y,r_0)$ are compactly contained in $D$.
Write $x^-$ and $y^+$ for the unique points in $D^-$ and $D^+$ respectively such that $\pi(x^-)=x$ and $\pi(y^+)=y$.
Fix $r\in(0,r_0]$ and set
$$
B^-=\{z\in D^-:\pi(z)\in B(x,r)\},\q B^+=\{z\in D^+:\pi(z)\in B(y,r)\}.
$$
Consider the set $\cF_r$ of pairs of bounded locally Lipschitz functions $(w^-,w^+)$ on $M$ such that $w^-$ is constant on $B(x,r)$, $w^+$ is constant on $B(y,r)$, 
$w^-=w^+$ on $K$ and $a(\nabla w^\pm,\nabla w^\pm)\le1$ almost everywhere.
Set 
$$
d_r(x,K,y)=\sup\{w^+(y)-w^-(x):(w^-,w^+)\in\cF_r\}.
$$
Fix $(w^-,w^+)\in\cF_r$ and define a function $w$ on $\tilde M$ by setting $w=w^\pm\circ\pi$ on $M^\pm$.
Take $f^-=0$ and choose $f^+\ge0$ supported on $B(y,r)$ and such that $\int_M(f^+)^2d\nu=2$.
Then $\int_{\tilde M}f^2d\tilde\nu=1$.
Fix $\d\in[0,\infty)$ and $\th\in\R$ and set $\psi=\th w$.
We deduce from \eqref{WHE} by a standard argument that
\begin{align*}
\frac d{dt}\int_{\tilde M}(e^\psi u_t)^2d\tilde\nu
&=-\int_{\tilde M}a(\nabla(e^{2\psi}u_t),\nabla u_t)d\tilde\nu+2\int_{\tilde M}a(\b e^{2\psi}u_t,\nabla u_t)d\tilde\nu\\
&=-\int_{\tilde M}a(\nabla u_t,\nabla u_t)e^{2\psi}d\tilde\nu+2\int_{\tilde M}a((\b-\nabla\psi)u_t,\nabla u_t)e^{2\psi}d\tilde\nu\\
&\le\int_{\tilde M}a(\b-\nabla\psi,\b-\nabla\psi)(e^\psi u_t)^2d\tilde\nu\le \l\int_{\tilde M}(e^\psi u_t)^2d\tilde\nu
\end{align*}
where $\l=\|a(\b-\nabla\psi,\b-\nabla\psi)\|_\infty\le(1+\d)\th^2+(1+1/\d)\|a(\b,\b)\|_\infty$. 
So, by Gronwall, we have
$$
\int_{\tilde M}(e^\psi u_t)^2d\tilde\nu\le e^{\l t}\int_{\tilde M}(e^\psi f)^2d\tilde\nu=e^{\l t+2\psi(y^+)}
$$
and so
$$
\int_{B^-}u_t^2d\tilde\nu\le e^{2\th(w^+(y)-w^-(x))+\l t}.
$$
Moreover, since $u^-_t\ge0$ and $(\pd/\pd t)u_t^-=\cL u_t^-$ on $(0,\infty)\times D$, by the parabolic mean-value estimate, there is a constant $C(D,r_0)<\infty$ such that, 
for all $t\in(0,\infty)$ and all $r\in(0,r_0]$ with $r^2\le t/2$, we have
$$
u_t(x^-)^2
\le C\fint_{t-r^2}^t\fint_{B^-}u_s^2d\tilde\nu ds
\le C\nu(B(x,r))^{-1}e^{2\th(w^+(y)-w^-(x))+\l t}.
$$
Set $v_t(z)=p(t,x,K,z)$, then $(\pd/\pd t)v_t=\hat\cL v_t$ on $(0,\infty)\times D$, where $\hat\cL f=\frac12\dvv(a\nabla f)-a(\b,\nabla f)$.
So, by the parabolic mean-value estimate again, we can choose $C(D,r_0)$ so that
$$
p(t,x,K,y)^2\le 
C\fint_{t-r^2}^t\fint_{B(y,r)}p(s,x,K,z)^2\nu(dz)ds
=C\fint_{t-r^2}^t\fint_{B^+}\tilde p(s,x^-,z)^2\tilde\nu(dz)ds.
$$
Assume now that $r^2\le t/4$.
For each $s\in[t-r^2,t]$, we may take $f^+=cp(s,x,K,.)1_{B(y,r)}$, where $c$ is chosen so that $\int_{\tilde M}f^2d\tilde\nu=1$.
For this choice of $f^+$, we have
$$
u_s(x^-)^2=\int_{B^+}\tilde p(s,x^-,z)^2\tilde\nu(dz).
$$
Hence
\begin{align*}
p(t,x,K,y)^2
&\le C\nu(B(y,r))^{-1}\fint_{t-r^2}^tu_s(x^-)^2ds\\
&\le C^2\nu(B(x,r))^{-1}\nu(B(y,r))^{-1}e^{2\th(w^+(y)-w^-(x))+\l t}
\end{align*}
from which we obtain, on optimizing over $\th$ and $w$,
$$
p(t,x,K,y)\le C\nu(B(x,r))^{-1/2}\nu(B(y,r))^{-1/2}
\exp\left\{-\frac{d_r(x,K,y)^2}{2(1+\d)t}+\left(1+\frac 1\d\right)\frac{\|a(\b,\b)\|_\infty t}2\right\}.
$$
For $(w^-,w^+)\in\cF_0$ and $r\in(0,r_0]$, set $\bar w_x=\sup_{z\in B(x,r)}w^-(z)$ and $\bar w_y=\inf_{z\in B(y,r)}w^+(z)$ and suppose that $\bar w_x\le\bar w_y$.
We can define $(w_r^-,w_r^+)\in\cF_r$ by setting $w_r^-(z)=\bar w_x\vee w(z)\wedge\bar w_y$ and $w_r^+(z)=\bar w_x\vee w(z)\wedge\bar w_y$.
Note that $|w^-(z)-w^-(x)|\le r$ for all $z\in B(x,r)$ and $|w^+(z)-w^+(y)|\le r$ for all $z\in B(y,r)$, so $w^+(y)-w^-(x)\le w_r^+(y)-w_r^-(x)+2r$.
On optimizing over $w$, we see that
$$
d(x,K,y)\le d_r(x,K,y)+2r.
$$
Hence, using the volume estimate \eqref{VE} and optimizing over $r$, we finally obtain, for all $t\in(0,4r_0^2\vee1]$
$$
p(t,x,K,y)\le Ct^{-d/(2\a)}(1+d(x,K,y)^2/t)^{d/(2\a)}
\exp\left\{-\frac{d(x,K,y)^2}{2(1+\d)^2t}+\left(1+\frac 1\d\right)\frac{\|a(\b,\b)\|_\infty t}2\right\}
$$
which is certainly sufficient to imply the claimed asymptotics.
\end{proof}
}

\section{Second variation of the energy in a sub-Riemannian manifold of non-constant rank}\label{SUB}
Let $M$ be a connected $\cinf$ manifold of dimension $d$ and let $a$ be a $\cinf$ non-negative quadratic form on $T^*M$ having a sub-Riemannian structure, as in Section \ref{NPD}. 
In this section and the next, we assume that $M$ is complete for the sub-Riemannian distance.
Since all questions which we address concern properties determined for a finite energy path by any neighbourhood of that path, this assumption of completeness results in no essential loss of generality.
Recall, for a continuous path $\o$ in $M$, the notions of energy and driving path defined in Section \ref{NPD}.
In Section \ref{SRN}, we reviewed the notion of cut locus and defined the Gaussian measure $\mu_\g$ in terms of the bicharacteristic flow.
Extrapolating from the Riemannian case, we might hope to characterize these objects instead in terms of the energy function $I$.
This will be done in Section \ref{QG}.
In the present section, in preparation, we show that the set $\hxy$ of finite-energy paths has, at suitably selected paths $\o$, a well-defined set of tangent directions $\tohxy$, 
and that $\tohxy$ has the structure of a Hilbert space.
Then we show that, when $\o$ is minimal and $\xi$ is a driving path for $\o$, the energy has a well-defined second variation $Q_\xi$ in a dense set of tangent directions, 
which allows us to define a continuous non-negative quadratic form $Q_\xi$ on $\tohxy$.
Finally, we show that $Q_\xi$ is minimized over $\xi$ by a unique driving path $\l$, which is in fact a bicharacteristic.
Given a sub-Riemannian structure $X=(X_1,\dots,X_m)$ for $a$, there are related constructions on the set of control paths, which are well known, evidently depending on the choice of $X$.
We emphasise that $\tohxy$ and $Q=Q_\l$ depend on $a$ alone.

Two sub-Riemannian structures $(X_1,\dots,X_m)$ and $(Y_1,\dots,Y_n)$ are equivalent (see \cite{ABB}) if $X_\ell=\sum_{k=1}^nf_{\ell k}Y_k$
and $Y_k=\sum_{\ell=1}^mg_{k\ell}X_\ell$ for all $\ell$ and $k$, for some $\cinf$ functions $f_{\ell k}$ and $g_{k\ell}$ on $M$.
If $a$ has constant rank, then all sub-Riemannian structures for $a$ are equivalent.
However, this is not true in general, as the following example shows.
In $\R^2$, take 
$$
X_1(x,y)=Y_1(x,y)=y\frac{\pd}{\pd x},\q X_2(x,y)=Y_2(x,y)=\frac{\pd}{\pd y}
$$
\begin{equation}\label{SREX}
X_3(x,y)=\sgn(x)Y_3(x,y)=e^{-1/|x|}\frac{\pd}{\pd x}.
\end{equation}
Then 
$$
\sum_{\ell=1}^3X_\ell\otimes X_\ell=\sum_{\ell=1}^3Y_\ell\otimes Y_\ell
$$ 
but $X$ and $Y$ define inequivalent sub-Riemannian structures on $\R^2$.
Thus, in the non-constant rank case, 
we cannot establish that an object is intrinsic to $a$ by showing it is intrinsic to an equivalence class of sub-Riemannian structures.
Instead, our approach will be to work directly from $a$, using a sub-Riemannian structure for existence but not uniqueness.

Recall that $\hx$ denotes the set of finite-energy paths starting at $x$ and $\hxy$ denotes the set of such paths terminating at $y$.
Also, $\hrm$ denotes the space of absolutely continuous paths $h:[0,1]\ra\R^m$ starting from $0$ such that
$$
\|h\|^2=\int_0^1| \dot h_t|^2dt<\infty.
$$
We fix a sub-Riemannian structure $X=(X_1,\dots,X_m)$ for $a$ and use this to construct some associated objects.
We will make clear which objects depend on the choice of $X$ and which do not.
Given $\o\in H^x$, we define $h(\o)\in\hrm$ by
$$
\dot h_t(\o)=X(\o_t)^*\xi_t
$$
where $\xi$ is any driving path for $\o$.
Then $h(\o)$ does not depend on the choice of $\xi$.
For $x\in M$ and $h\in\hrm$, write $\phi(x,h)$ for the solution $(\phi_t)_{t\in[0,1]}$ of the differential equation
$$
\dot\phi_t=\sum_{\ell=1}^mX_\ell(\phi_t)\dot h_t^\ell,\quad \phi_0=x.
$$
Denote by $p_X(x)$ the orthogonal projection $\R^m\ra(\ker X(x))^\perp$.
For $x\in M$ and $h,k\in\hrm$, define $\pi=\pi(x,h)k\in\hrm$ by
$$
\dot\pi_t=p_X(\phi_t(x,h))\dot k_t.
$$
The following parametrization of $\hx$ by $\hrm$ using $X$ is well known.

\begin{proposition}\label{2.1}
Let $x\in M$ and let $\g\in H^x$. 
Then $\g=\phi(x,h(\g))$.
Moreover, for all $h\in\hrm$, if we set $\o=\phi(x,h)$ and $\pi=\pi(x,h)h$, then $\o\in\hx$ and $\pi=h(\o)$ and $I(\o)=\|\pi\|^2\le\|h\|^2$.
\end{proposition}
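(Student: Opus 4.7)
The plan is to verify both assertions using the orthogonal decomposition $\R^m = \ker X(\o_t) \oplus (\ker X(\o_t))^\perp$ combined with uniqueness for Carath\'eodory ODEs driven by $\hrm$ paths.

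\emph{First claim.} Given $\g\in H^x$ with a driving path $\xi$, the definition of $h(\g)$ reads $\dot h_t^\ell(\g) = \<\xi_t, X_\ell(\g_t)\>$, so
\[
\sum_{\ell=1}^m X_\ell(\g_t)\,\dot h_t^\ell(\g) = \sum_{\ell=1}^m \<\xi_t, X_\ell(\g_t)\>X_\ell(\g_t) = a(\g_t)\xi_t = \dot\g_t
\]
for almost every $t$. Since $\|h(\g)\|^2 = \int_0^1 \sum_\ell \<\xi_t,X_\ell(\g_t)\>^2\,dt = I(\g) < \infty$, the path $h(\g)$ lies in $\hrm$. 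Hence $\g$ and $\phi(x,h(\g))$ solve the same Carath\'eodory ODE with the same initial condition, and pathwise uniqueness yields $\g = \phi(x,h(\g))$. Independence of $h(\g)$ from the choice of $\xi$ comes along for the ride, because any two driving paths differ by an element of $\ker a(\g_t) = \ker X(\g_t)^*$ almost everywhere.

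\emph{Second claim.} For $h\in\hrm$, standard Carath\'eodory theory, combined with the a priori bound $|\dot\o_t|\le \max_\ell|X_\ell(\o_t)|\,|\dot h_t|$ and completeness of $M$, produces a unique absolutely continuous solution $\o = \phi(x,h)$ on $[0,1]$. Decompose $\dot h_t = \dot\pi_t + \rho_t$ with $\dot\pi_t = p_X(\o_t)\dot h_t$ and $\rho_t\in\ker X(\o_t)$. Since $X(\o_t)\rho_t = 0$, we have $\dot\o_t = \sum_\ell X_\ell(\o_t)\dot\pi_t^\ell$. Because $\dot\pi_t \in (\ker X(\o_t))^\perp = \operatorname{range} X(\o_t)^*$, there exists $\xi_t\in T^*_{\o_t}M$ with $X(\o_t)^*\xi_t = \dot\pi_t$; choosing $\xi_t$ to be the minimum-norm preimage relative to a fixed auxiliary Riemannian metric supplies a measurable driving path for $\o$. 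Consequently $\o\in\hx$ and
\[
I(\o) = \int_0^1 |X(\o_t)^*\xi_t|^2\,dt = \int_0^1|\dot\pi_t|^2\,dt = \|\pi\|^2,
\]
while $\|\pi\|^2\le\|h\|^2$ is immediate from the fact that $p_X(\o_t)$ is an orthogonal projection. Finally, $\pi = h(\o)$ because the defining identity $\dot h_t^\ell(\o) = \<\xi_t, X_\ell(\o_t)\>$ is precisely $\dot\pi_t = X(\o_t)^*\xi_t$, and both paths start at $0$.

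The main subtlety is the measurability of $\xi$, since the rank of $X(\o_t)$ can jump in the non-constant-rank setting and the associated pseudo-inverse depends only Borel-measurably on $t$. Fixing an auxiliary Riemannian metric and selecting the Moore--Penrose-type minimum-norm preimage resolves this cleanly; everything else is routine.
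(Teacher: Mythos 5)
Your proof is correct and follows essentially the same route as the paper's: decompose $\R^m$ orthogonally as $\ker X(\o_t)\oplus(\ker X(\o_t))^\perp=\ker X(\o_t)\oplus\im X(\o_t)^*$, lift $\dot\pi_t$ to a measurable driving path $\xi_t$ with $X(\o_t)^*\xi_t=\dot\pi_t$, and read off $I(\o)=\|\pi\|^2$. You simply fill in the details the paper leaves to the reader (the first claim, the identification $\pi=h(\o)$, and measurability via a Moore--Penrose selection).
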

\begin{proof}
Since $(\ker X(x))^\perp=\im X(x)^*$, there is a measurable map $\xi:[0,1]\ra\ctm$ over $\o$ such that $\dot\pi_t=X(\o_t)^*\xi_t$.
Then $\dot\o_t=X(\o_t)X(\o_t)^*\xi_t=a(\o_t)\xi_t$ and
$$
I(\o)=\int_0^1\<\xi_t,a(\o_t)\xi_t\>dt=\int_0^1 |X(\o_t)^*\xi_t|^2dt=\|\pi\|^2.
$$
We leave the remaining details to the reader.
\end{proof}

Let $\o$ be a finite-energy path and fix a chart along $\o$.
We say that a driving path $\xi$ for $\o$ is {\em tame} if
$$
\int_0^1|\xi_t|^2dt<\infty
$$
where we have used the Euclidean norm in the chart.
This condition does not depend on the choice of chart.
We say that $\o$ is {\em tame}%
\footnote{%
It is straightforward to see that, if the diffusivity $a$ has constant rank, then every finite-energy path is tame. 
On the other hand, consider on $\R^2$ the vector fields $X_1=\pd/\pd x_1,X_2=x_1\pd/\pd x_2$ and the path $\o_t=(t,t^2/2)$ for $t\in[0,1]$. 
Since $X_1,X_2$ span $\R^2$ except on $\{x_1=0\}$, the only driving path for $\o$ is $\xi_t=dx_1+(1/t)dx_2$. 
Then $\<\xi_t,a(\o_t)\xi_t\>=2$ for all $t$ so $\o$ has finite energy, but $\xi$ is not tame.}
if it has a tame driving path.

Let $\o$ be a tame path in $\hx$.
Fix a chart along $\o$ and a tame driving path $\xi$ for $\o$. 
We will define a space $\tohx$ of {\em finite-energy variations} of $\o$ which, for now, may appear to depend on the choice of chart and of $\xi$. 
Denote by $\tohx$ the set of absolutely continuous maps $v:[0,1]\ra TM$ over $\o$ such that
\begin{equation}\label{v}
\dot v_t=(\nabla_{v_t}a)(\o_t)\xi_t+a(\o_t)\eta_t,\quad v_0=0
\end{equation}
for some measurable path $\eta:[0,1]\ra\ctm$ over $\o$, with
$$
\|v\|^2_\xi:=\int_0^1\<\eta_t,a(\o_t)\eta_t\>dt<\infty.
$$
Note that, in \eqref{v}, the meaning of the derivatives $\dot v_t$ and $\nabla_{v_t}a$ depends on the choice of chart.
Note also that, since $\xi$ is tame, by Gronwall's lemma, there is a constant $C<\infty$ such that $\|v\|_\infty\le C\|v\|_\xi$ for all $v\in\tohx$.
If $\eta$ can be chosen in \eqref{v} so that, in addition,
$$
\int_0^1|\eta_t|^2dt<\infty
$$
then we will call $v$ a {\em tame finite-energy variation}.
Write $\ttohx$ for the set of tame finite-energy variations of $\o$. 
Set $\tohxy=\{v\in\tohx:v_1=0\}$ and $\ttohxy=\{v\in\ttohx:v_1=0\}$.

\begin{proposition}
Let $\o\in \hx$ be tame, with tame driving path $\xi$.
Then $\ttohx$ is dense in $\tohx$.
Moreover $\ttohxy$ is dense in $\tohxy$.
\end{proposition}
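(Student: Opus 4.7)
The plan is to approximate an arbitrary $v \in \tohx$ by truncating its driving path $\eta$, and then to handle the endpoint constraint in $\tohxy$ by subtracting a small element of $\ttohx$ with the correct terminal value, constructed via a finite-dimensional right inverse of the endpoint map.

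First I would fix $v \in \tohx$ with a measurable driving path $\eta$ satisfying \eqref{v} and $\int_0^1 \<\eta_t, a(\o_t)\eta_t\> dt < \infty$. Using a background Riemannian metric to define pointwise norms on $T^*M$, set $\eta^n_t = \eta_t\,\mathbbm{1}_{\{|\eta_t| \le n\}}$; then $\eta^n$ is bounded and measurable, so $\int_0^1 |\eta^n_t|^2 dt \le n^2$. Solving the inhomogeneous linear ODE $\dot v^n_t = (\nabla_{v^n_t} a)(\o_t)\xi_t + a(\o_t)\eta^n_t$, $v^n_0 = 0$ defines $v^n \in \ttohx$. The essential observation is that $\int_0^1 \<\eta_t, a(\o_t)\eta_t\> dt$ is intrinsic to $v$, because any two admissible drivers for the same $v$ differ pointwise in $\ker a(\o_t)$, on which the quadratic form $\<\cdot, a(\o_t)\cdot\>$ vanishes. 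Since $v - v^n$ is driven by $\eta - \eta^n$, this yields
$$\|v - v^n\|_\xi^2 = \int_{\{|\eta_t| > n\}} \<\eta_t, a(\o_t)\eta_t\> dt,$$
which tends to $0$ by dominated convergence (the integrand is dominated by the $L^1$ function $\<\eta_t, a(\o_t)\eta_t\>$ and $|\eta_t|$ is a.e.\ finite). This proves the first density claim.

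For the second statement I would take $v \in \tohxy$ and construct $v^n \in \ttohx$ exactly as above. The Gronwall bound $\|v\|_\infty \le C\|v\|_\xi$ noted in the paper makes the endpoint evaluation $E: \tohx \to T_{\o_1}M$ continuous, so $v^n_1 \to v_1 = 0$, though $v^n_1$ need not vanish. To correct this, I would exploit the finite-dimensionality of $V := \{w_1 : w \in \ttohx\} \subset T_{\o_1}M$: pick $w^{(1)}, \dots, w^{(r)} \in \ttohx$ whose endpoints form a basis of $V$, and define a linear lift $\tilde w: V \to \ttohx$ by $\tilde w\bigl(\sum_j c_j w^{(j)}_1\bigr) = \sum_j c_j w^{(j)}$. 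Equivalence of norms on the finite-dimensional space $V$ gives a constant $C'$ with $\|\tilde w(\epsilon)\|_\xi \le C'|\epsilon|$. Since $v^n_1 \in V$ automatically, set $\tilde v^n := v^n - \tilde w(v^n_1)$; this lies in $\ttohxy$ because $(\tilde v^n)_1 = v^n_1 - v^n_1 = 0$, and
$$\|\tilde v^n - v\|_\xi \le \|v^n - v\|_\xi + C'|v^n_1| \to 0.$$

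The main obstacle is recognising that the functional $\|v\|_\xi$ really depends only on $v$ and not on the particular choice of $\eta$; this intrinsic character is what makes the truncation argument give clean norm control, rather than merely yielding an approximation in some weaker sense. Once this is in hand, the rest is essentially dominated convergence plus a finite-dimensional lifting to absorb the endpoint error, and the argument avoids any further appeal to the sub-Riemannian structure beyond what is already used to define $\tohx$.
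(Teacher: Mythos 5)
Your argument is correct and reaches the same conclusion by a genuinely different route in both halves. For the first density claim the paper regularizes the driver by $\eta^\ve_t = (\sqrt{a(\o_t)}+\ve I)^{-1}\sqrt{a(\o_t)}\eta_t$, obtaining the inequality $\<\eta^\ve_t,a(\o_t)\eta^\ve_t\>+\ve^2|\eta^\ve_t|^2\le\<\eta_t,a(\o_t)\eta_t\>$ to show the approximant is tame and then applying dominated convergence; you instead truncate pointwise by $\eta^n_t=\eta_t\mathbbm 1_{\{|\eta_t|\le n\}}$, which is more elementary and makes tameness immediate. Both work because of the same intrinsic fact you correctly isolate: two drivers for the same $v$ differ in $\ker a(\o_t)$, so $\|\cdot\|_\xi$ is well-defined, and the ``excess'' driver $\eta\mathbbm 1_{\{|\eta_t|>n\}}$ contributes exactly $\int_{\{|\eta_t|>n\}}\<\eta_t,a(\o_t)\eta_t\>\,dt$. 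For the second claim the paper picks a finite family $E\subseteq\tohx$ whose endpoints form a basis of $V=\{v_1:v\in\tohx\}$ and then perturbs each $e\in E$ to a nearby tame $\tilde e$, using a stability-of-bases argument to ensure $\{\tilde e_1\}$ is still a basis; you sidestep the perturbation entirely by taking $V=\{w_1:w\in\ttohx\}$, choosing the basis already inside $\ttohx$, and then using a fixed linear lift. This is cleaner, and the only thing it relies on that the paper's route does not explicitly state is the (true and automatic) fact that $v^n_1\in V$ once $v^n\in\ttohx$. Your appeal to boundedness of a linear map out of a finite-dimensional space gives the constant $C'$ without further work. Net: same theorem, slightly lighter machinery, and you make explicit the well-definedness of $\|\cdot\|_\xi$ which the paper leaves implicit.
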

\begin{proof}
For $\ve>0$, set $\eta^\ve_t=(\sqrt{a(\o_t)}+\ve I)^{-1}\sqrt{a(\o_t)}\eta_t$ and define $v^\ve$ by
$$
\dot v_t^\ve=(\nabla_{v^\ve_t}a)(\o_t)\xi_t+a(\o_t)\eta_t^\ve,\quad v_0^\ve=0.
$$
Note that $\sqrt{a(\o_t)}(\eta^\ve_t-\eta_t)\to0$ for all $t\in[0,1]$ and 
$$
\<\eta_t^\ve-\eta_t,a(\o_t)(\eta^\ve_t-\eta_t)\>=\ve^2|\eta_t^\ve|^2\le\<\eta_t^\ve,a(\o_t)\eta^\ve_t\>+\ve^2|\eta_t^\ve|^2\le\<\eta_t,a(\o_t)\eta_t\>.
$$
Hence
$$
\int_0^1\<\eta_t^\ve,a(\o_t)\eta_t^\ve\>dt+\ve^2\int_0^1|\eta_t^\ve|^2dt\le\int_0^1\<\eta_t,a(\o_t)\eta_t\>dt<\infty
$$
so $v^\ve\in\ttohx$ for all $\ve$, and by dominated convergence, as $\ve\to0$,
$$
\|v^\ve-v\|^2_\xi=\int_0^1\<\eta_t^\ve-\eta_t,a(\o_t)(\eta^\ve_t-\eta_t)\>dt\to0.
$$
We have proved the first assertion.

Set $V=\{v_1:v\in\tohx\}$. 
There is a finite set $E\sse\tohx$ such that $\{e_1:e\in E\}$ is a basis for $V$.
Given $\d>0$, we can find for each $e\in E$ a tame $\tilde e\in\ttohx$ such that $\|e-\tilde e\|_\xi\le\d$.
Then $|e_1-\tilde e_1|\le C\d$.
We can therefore choose $\d$ sufficiently small that $\{\tilde e_1:e\in E\}$ remains a basis for $V$.
Set
$$
A:=\sup\left\{\left\|\sum_{e\in E}\a_e\tilde e\right\|_\xi:\a_e\in\R,\left|\sum_{e\in E}\a_e\tilde e_1\right|=1\right\}.
$$
Then $A<\infty$.
Given $v\in\tohxy$ and $\ve>0$, there exists $v'\in\ttohx$ such that $\|v-v'\|_\xi<\ve/(1+CA)$.
Then $v'_1\in V$ and $|v'_1|\le C\ve/(1+CA)$.
We can write $v'_1=\sum_{e\in E}\a_e\tilde e_1$ for some $\a_e\in\R$.
Set 
$$
\tilde v=v'-\sum_{e\in E}\a_e\tilde e
$$
then $\tilde v\in\ttohxy$ and 
$$
\|v-\tilde v\|_\xi\le\|v-v'\|_\xi+\left\|\sum_{e\in E}\a_e\tilde e\right\|_\xi\le\ve
$$
which proves the second assertion.
\end{proof}

By standard arguments, the map $\phi:M\times\hrm\ra H$ is differentiable, in both arguments. 
Fix $x\in M$ and $h\in\hrm$.
Set $\o_t=\phi_t(x,h)$ and define
$$
u_t=\phi^*_t(x,h)=\frac{\pd}{\pd x}\phi_t(x,h)\in T_{\o_t}M\otimes\ctxm.
$$
For $k\in\hrm$, define $v=v(k)$ by
$$
v_t=\frac{\pd}{\pd h}\phi_t(x,h)k\in T_{\o_t}M.
$$
Then $u$ and $v$ satisfy the differential equations
\begin{align}\notag
\dot u_t&=\sum_{\ell=1}^m\nabla X_\ell(\o_t)u_t\dot h^\ell_t,\quad u_0=I,\\\label{e1}
\dot v_t&=\sum_{\ell=1}^m\nabla X_\ell(\o_t)v_t\dot h^\ell_t+X_\ell(\o_t)\dot k^\ell_t,\quad v_0=0
\end{align}
where the derivatives $\dot u_t$, $\dot v_t$ and $\nabla X_\ell$ are understood in the chosen chart.
Then, by the variation of constants formula,
\begin{equation}\label{vp}
v_t=\sum_{\ell=1}^mu_t\int_0^t u_s^{-1}X_\ell(\o_s)\dot k_s^\ell ds.
\end{equation}

\begin{proposition}\label{2.2}
Let $\o\in\hx$ be tame and let $k\in\hrm$.
Fix a chart along $\o$ and a tame driving path $\xi$ for $\o$, and write $\tohx$ for the associated space of finite-energy variations.
Set
$$
v(k)=\frac\pd{\pd h}\phi(x,h(\o))k.
$$
Then $v(k)\in\tohx$.
On the other hand, for any $v\in\tohx$, we can define $k(\xi,v)\in\hrm$ by
$$
\dot k_t^\ell(\xi,v)=\<\xi_t,\nabla X_\ell(\o_t)v_t\>+\<\eta_t,X_\ell(\o_t)\>
$$
where $\eta:[0,1]\to T^*M$ is a measurable path over $\o$ satisfying {\rm\eqref{v}}.
Then $k(\xi,v)$ does not depend on the choice of $\eta$, and 
$$
v=\frac\pd{\pd h}\phi(x,h(\o))k(\xi,v).
$$
Moreover there is a constant $C<\infty$, depending only on $I(\o)$, $\int_0^1|\xi_t|^2dt$ and a uniform bound for $X$ and $\nabla X$ along $\o$, such that
$$
\|v(k)\|_\xi\leq C\|k\|,\q \|k(\xi,v)\|\le C\|v\|_\xi.
$$
Moreover, if $v=v(k)$, then $\pi(x,h(\o))(k(\xi,v)-k)=0$.
\end{proposition}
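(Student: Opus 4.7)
The plan is direct computation organized around a single algebraic dictionary. Writing $a = \sum_\ell X_\ell \otimes X_\ell$ in the chart, we have
$$
(\nabla_v a)(\o)\xi = \sum_\ell (\nabla_v X_\ell)(\o) \langle \xi, X_\ell(\o)\rangle + \sum_\ell X_\ell(\o) \langle \xi, (\nabla_v X_\ell)(\o)\rangle, \q a(\o)\eta = \sum_\ell X_\ell(\o) \langle \eta, X_\ell(\o)\rangle,
$$
while $\dot h_t^\ell(\o) = \langle \xi_t, X_\ell(\o_t)\rangle$. Substituting these into \eqref{v} shows that \eqref{v} for $(v, \eta)$ and \eqref{e1} for $(v, k)$ become equivalent precisely under the correspondence $\dot k^\ell = \langle \xi, \nabla_v X_\ell\rangle + \langle \eta, X_\ell\rangle$.

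Given this, for $k \in \hrm$ the path $v(k)$ satisfies \eqref{e1}, and choosing $\eta$ pointwise so that $a(\o)\eta = \dot v - (\nabla_v a)\xi$ (possible because the right-hand side lies in $\im a(\o) = \im X(\o)$) recovers \eqref{v}. A Gronwall argument applied to \eqref{e1} yields $\|v\|_\infty \le C \|k\|$, and then tameness of $\xi$, together with a uniform bound on $X, \nabla X$ along $\o$, gives $\|v(k)\|_\xi \le C \|k\|$. Conversely, for $v \in \tohx$ with any $\eta$ realizing \eqref{v}, the dictionary defines a candidate $k(\xi, v) \in \hrm$, and $v$ and $v(k(\xi, v))$ then satisfy \eqref{e1} with the same control and vanishing initial value; ODE uniqueness gives $v = v(k(\xi, v))$, and the bound $\|k(\xi, v)\| \le C \|v\|_\xi$ follows from an analogous estimate.

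Independence of $k(\xi, v)$ from the choice of $\eta$ is the one step requiring comment. Equation \eqref{v} determines $a(\o_t)\eta_t = X(\o_t) X(\o_t)^* \eta_t$; since $X(\o_t)^* \eta_t$ automatically lies in $\im X(\o_t)^* = (\ker X(\o_t))^\perp$ and $X(\o_t)$ is injective on this subspace, $X(\o_t)^* \eta_t$ is determined uniquely by $a(\o_t) \eta_t$. Hence the tuple $(\langle \eta_t, X_\ell(\o_t)\rangle)_\ell = X(\o_t)^* \eta_t$ depends only on $v$ and $\xi$.

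Finally, for the projection identity, assume $v = v(k)$, fix $\eta$ realizing \eqref{v}, and set $c_t^\ell = \dot k_t^\ell - \langle \xi_t, \nabla_{v_t} X_\ell(\o_t)\rangle$. The argument of the previous paragraph gives $\langle \eta_t, X_\ell(\o_t)\rangle = (p_X(\o_t) c_t)^\ell$, so
$$
\dot k(\xi, v)_t^\ell - \dot k_t^\ell = -(I - p_X(\o_t))c_t^\ell \in \ker X(\o_t),
$$
and applying $p_X(\phi_t(x, h(\o))) = p_X(\o_t)$ annihilates this, yielding $\pi(x, h(\o))(k(\xi, v) - k) = 0$. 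The only genuine obstacle is careful bookkeeping between the naive preimage $c_t$ and its orthogonal projection $X(\o_t)^* \eta_t = p_X(\o_t) c_t$ onto $(\ker X(\o_t))^\perp$; everything else is routine ODE analysis and standard Gronwall bounds.
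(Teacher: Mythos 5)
Your proof is correct and follows essentially the same route as the paper: the algebraic decomposition $\nabla a = \sum_\ell \nabla X_\ell\, X_\ell^* + X_\ell\, \nabla X_\ell^*$ to pass between \eqref{v} and \eqref{e1}, Gronwall for both directions of the norm bound, and the observation that $X(\o_t)^*\eta_t = p_X(\o_t)(\dot k_t - \dot g_t)$ is the projection onto $(\ker X(\o_t))^\perp$. The one point where you diverge slightly is the final projection identity: the paper derives $\sum_\ell X_\ell(\o_t)(\dot k_t^\ell(\xi,v)-\dot k_t^\ell)=0$ by noting that $v(k(\xi,v)-k)=0$ and invoking the variation-of-constants formula \eqref{vp}, while you compute $\dot k(\xi,v)_t-\dot k_t=-(I-p_X(\o_t))c_t$ directly and observe it lies in $\ker X(\o_t)$ pointwise; the two arguments are equivalent, and yours has the small advantage of also making the $\eta$-independence of $k(\xi,v)$ transparent, which the paper leaves implicit.
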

\begin{proof}
Write $h=h(\o)$.
Then $v(k)$ satisfies \eqref{e1} so, by Gronwall's lemma, there is a constant $C<\infty$ such that $\|v(k)\|_\infty\le C\|k\|$.
Here and below, $C$ is understood to have the dependence claimed in the statement.
Since $\xi$ is tame, we can define $g(k)\in\hrm$ by
$$
\dot g_t^\ell(k)=\<\xi_t,\nabla X_\ell(\o_t)v_t(k)\>.
$$
There is a constant $C<\infty$ such that $\|g(k)\|\leq C\|v(k)\|_\infty$.
We can find a measurable map $\eta:[0,1]\ra\ctm$ over $\o$, such that
\begin{equation}\label{e3}
a(\o_t)\eta_t=\sum_{\ell=1}^mX_\ell(\o_t)(\dot k^\ell_t-\dot g^\ell_t(k)).
\end{equation}
Note that
$$
\nabla a(x)=\sum_{\ell=1}^m\nabla X_\ell(x)X_\ell(x)^*+X_\ell(x)\nabla X_\ell(x)^*
$$
so $v(k)$ satisfies 
\begin{equation*}
\dot v_t(k)=(\nabla_{v_t(k)}a)(\o_t)\xi_t+a(\o_t)\eta_t,\quad v_0(k)=0
\end{equation*}
and, moreover
$$
\|v(k)\|^2_\xi=\int_0^1\<\eta_t,a(\o_t)\eta_t\>dt=\|\pi(x,h)(k-g)\|^2\leq C\|k\|^2.
$$
Hence $v(k)\in\tohx$ and $\|v(k)\|_\xi\le C\|k\|$.

On the other hand, for $v\in\tohx$, we see from \eqref{v} that there is a constant $C<\infty$ such that $\|v\|_\infty\leq C\|v\|_\xi$, 
so $\|k(\xi,v)\|\leq C\|v\|_\xi$. Moreover, we can write \eqref{v} in the form
$$
\dot v_t=\sum_{\ell=1}^m\nabla X_\ell(\o_t)v_t\dot h^\ell_t+X_\ell(\o_t)\dot k^\ell_t(\xi,v)
$$
so $v=v(k(\xi,v))$, as claimed.
Finally, if $v=v(k)$, then $v(k(\xi,v)-k)=0$, so, from \eqref{vp}, 
$$
\sum_{\ell=1}^mX_\ell(\o_t)(\dot k^\ell_t(\xi,v)-\dot k^\ell_t)=0
$$ 
for almost all $t$, and so $\pi(x,h)(k(\xi,v)-k)=0$.
\end{proof}

Fix $\o$ and set $\tilde H=\{k\in\hrm:\pi(x,h(\o))k=k\}$. 
Then $\tilde H$ is a closed subspace of the Hilbert space $\hrm$. 
Proposition \ref{2.2} shows that, for any choice of chart along $\o$ and any choice of tame driving path $\xi$ for $\o$,
the map $k\mapsto v(k)$ is a linear isomorphism $\tilde H\to\tohx$, which is bounded with bounded inverse, when $\tohx$ is given the norm $\|.\|_\xi$. 
Hence, the space $\tohx$ does not depend on the choice of chart and driving path.
It clearly does not depend either on any choice of sub-Riemannian structure $X$. 
Moreover, the norms $\|.\|_\xi$ are all equivalent, and all make $\tohx$ into a Hilbert space.

A minimal finite-energy path $\o\in\hxy$ is said to be regular if the linear map
$$
\frac{\pd}{\pd h}\phi_1(x,h(\o)):\hrm\ra\tym
$$
is onto. 
By \eqref{vp}, this is equivalent to Bismut's condition that the deterministic Malliavin covariance matrix
\begin{eqnarray}
\label{mat}
C_1(\o)
=\sum_{\ell=1}^m\int_0^1\left(u_t^{-1}X_\ell(\o_t)\right)\otimes\left(u_t^{-1}X_\ell(\o_t)\right)dt
=\int_0^1u_t^{-1}a(\o_t)(u_t^{-1})^*dt
\end{eqnarray}
is invertible.
In particular, when $a$ is positive-definite, every $\o\in\hxy$ is regular.
In general, these conditions may depend on the choice of sub-Riemannian structure.
By Proposition \ref{2.2}, a tame path $\o\in\hxy$ is regular if and only 
$$
\{v_1:v\in\tohx\}=\tym
$$
so for tame paths the notion of regularity depends only on $a$ and not on the choice of sub-Riemannian structure.
It is straightforward to see that $\o$ is regular if and only if its time-reversal is regular. 
In \cite[pp. 22--24]{B}, Bismut gives an argument which shows that invertibility of $C_1(\o)$ is intrinsic to the equivalence class of the sub-Riemannian structure $X$.
As the example \eqref{SREX} shows, this is not the same as being intrinsic to $a$. 

Consider a regular tame finite-energy path $\o\in\hxy$.
We use the sub-Riemannian structure $X$ to define $h(\o)\in\hrm$ as above and we
write $K$ for the kernel of the linear map $(\pd/\pd h)\phi_1(x,h(\o)):\hrm\to T_yM$.
Then
$$
K^\perp=\{k\in\hrm:\dot k_t^\ell=\<\eta_0,u_t^{-1}X_\ell(\o_t)\>,\eta_0\in\ctxm\}
$$
and $\left.(\pd/\pd h)\phi_1(x,h(\o))\right|_{K^\perp}$ is invertible. 
By the implicit function theorem in Hilbert space, there exist $\d>0$ and a $\cinf$ map $\th:K\to H$ such that, for all $k\in K$, we have
$$
\phi_1(x,h(\o)+k+\th(k))=y
$$
and such that, for all $k\in K$ and all $k'\in K^\perp$ with $\| k+k'\|<\d$, we have $\th(k)\in K^\perp$ and
$$
\phi_1(x,h+k+k')=y\quad\text{only if}\quad k'=\th(k).
$$
Note that $\th(0)=0$. 
For $k\in K$ and $\ve$ sufficiently small, we have
\begin{equation*}
\phi_1(x,h+\ve k+\th(\ve k))=y.
\end{equation*}
On differentiating in $\ve$ at $0$, we obtain $(\pd/\pd h)\phi_1(x,h)(k+\th'(0)k)=0$, so $\th'(0)k\in K$.
Since $\th$ takes values in $K^\perp$, we deduce that $\th'(0)=0$.
On taking the second derivative, we obtain
\begin{equation}\label{kk}
\frac{\pd^2}{\pd h^2}\phi_1(x,h)(k,k)
+\frac{\pd}{\pd h}\phi_1(x,h)\th''(0)(k,k)=0.
\end{equation}
Since $\th''(0)(k,k)\in K^\perp$, this equation determines $\th''(0)$.

We note the following useful identity. 
Let $k\in K^\perp$ and let $k'\in\hrm$. 
Then $\dot k_t^\ell=\<\eta_0,u_t^{-1}X_\ell(\o_t)\>$ for some $\eta_0\in\ctxm$.
So
\begin{equation}\label{ui}
\<k,k'\>
=\sum_{\ell=1}^m\int_0^1\<\eta_0,u_t^{-1}X_\ell(\o_t)\>(\dot k_t')^\ell\, dt=\<\eta_0,u_1^{-1}v_1(k')\>.
\end{equation}

The key arguments and computations, for a sub-Riemannian structure $X$, in the next two results are due to Bismut \cite[Theorem 1.17 and Theorem 1.24]{B}.
Our new contribution is to construct objects and show results which do not depend on the choice of $X$.
\begin{proposition}\label{2.3}
Let $\o\in\hxy$ be tame and regular and let $\xi$ be a tame driving path for $\o$.
Let $v\in\ttohxy$ be a tame finite-energy variation.
There exists a measurable map
$$
(\ve,t)\mapsto\xi^\ve_t:(-1,1)\times[0,1]\ra\ctm
$$
such that $\xi^0=\xi$ and
\begin{itemize}
\item[{\rm (i)}] $\o^\ve=\pi\xi^\ve\in\hxy$ for all $\ve$, with $\dot\o_t^\ve=a(\o_t^\ve)\xi_t^\ve$,
\item[{\rm (ii)}] in any chart along $\o$, there is a constant $C<\infty$ such that, for all $\ve\in(-1,1)$,
$$
\sup_{t\in[0,1]}|\o_t^\ve-\o_t-\ve v_t|\leq C\ve^2
$$
and, writing $\xi^\ve=(\o^\ve,p^\ve)$ and $ \eta=(\o,q)$, 
$$
\int_0^1\<p_t^\ve-p_t-\ve q_t,a(\o_t^\ve)(p_t^\ve-p_t-\ve q_t)\>dt\leq C\ve^4.
$$
\end{itemize}
Moreover, for any such map $\ve\mapsto \xi^\ve$, the map $\ve\mapsto I(\o^\ve)$ is differentiable at $\ve=0$, and is twice differentiable at $\ve=0$ if $\o$ is minimal.
Define
$$
L(v)=\left.\frac{\pd}{\pd\ve}\right|_{\ve=0} I(\o^\ve)
$$
then $L$ extends uniquely to a continuous linear form on $\tohxy$.
In the case where $\o$ is minimal, define
\begin{equation}\label{Q}
Q_\xi(v)=\frac{1}{2}\left.\frac{\pd^2}{\pd\ve^2}\right|_{\ve=0} I(\o^\ve)
\end{equation}
then $Q_\xi$ extends uniquely to a continuous quadratic form on $\tohxy$.
Finally, given a sub-Riemannian structure $X$ for $a$, for $h=h(\o)$ and
$k=k(\xi,v)$, we have
$$
L(v)=2\<h,k\>
$$
and, when $\o$ is minimal, then $h\in K^\perp$ and
$$
Q_\xi(v)=\|k\|^2+\<h,\th''(0)(k,k)\>.
$$
\end{proposition}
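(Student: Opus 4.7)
Given $v\in\ttohxy$, I will construct the family through the control parametrization. Let $k=k(\xi,v)\in\hrm$; since $v(k)=v$ by Proposition \ref{2.2} and $v_1=0$, we have $k\in K$. Using the map $\th:K\to K^\perp$ produced by the implicit function theorem just above the statement, set $h^\ve=h+\ve k+\th(\ve k)$, so $\phi_1(x,h^\ve)=y$ for small $|\ve|$, and define $\o^\ve=\phi(x,h^\ve)\in\hxy$. Fix an auxiliary Riemannian structure on $T^*M$ and let $\xi^\ve_t$ be the unique covector in $(\ker a(\o^\ve_t))^\perp$ with $a(\o^\ve_t)\xi^\ve_t=\dot\o^\ve_t$, which is well defined a.e.\ because the control representation provides a driving path. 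The Taylor expansion $h^\ve=h+\ve k+\tfrac{\ve^2}{2}\th''(0)(k,k)+O(\ve^3)$ (using $\th(0)=0$ and $\th'(0)=0$), together with smoothness of $\phi(x,\cdot):\hrm\to H$, gives the uniform estimate in (ii) for $\o^\ve-\o-\ve v$; the algebraic dependence of $\xi^\ve$ on $\o^\ve$ and $\dot\o^\ve$ then yields the $L^2(a)$ estimate for $p^\ve-p-\ve q$.

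For the first variation I differentiate $I(\o^\ve)=\int_0^1\<\xi^\ve_t,\dot\o^\ve_t\>dt$ under the integral, using (ii). The derivative at $\ve=0$ has integrand $2\<q_t,\dot\o_t\>+\<\xi_t,(\nabla_{v_t}a)(\o_t)\xi_t\>$. Expanding $a=\sum_\ell X_\ell\otimes X_\ell$ gives $\<\xi,(\nabla_va)\xi\>=2\sum_\ell\<\xi,X_\ell\>\<\xi,\nabla X_\ell v\>$, and substituting the formula for $\dot k_t^\ell$ from Proposition \ref{2.2} the integrand collapses to $2\sum_\ell\<\xi_t,X_\ell(\o_t)\>\dot k_t^\ell=2\<\dot h_t,\dot k_t\>$, whence $L(v)=2\<h,k\>$. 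By Proposition \ref{2.2} this is bounded by $2\|h\|\|k\|\le C\|h\|\|v\|_\xi$, so $L$ extends uniquely and continuously to $\tohxy$.

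For the second variation, suppose $\o$ is minimal. Then $L$ vanishes on $\tohxy$, so $\<h,k\>=0$ for every $k$ arising as $k(\xi,v)$ with $v\in\ttohxy$; since these range over a dense subset of $K$, we get $h\in K^\perp$. The central identity is $dh(\o)v=k(\xi,v)$, obtained by differentiating $h(\o^\ve)_t^\ell=\<\xi^\ve_t,X_\ell(\o^\ve_t)\>$ at $\ve=0$ via (ii) and matching with the defining formula for $k(\xi,v)$. Writing $\pi^\ve=\pi(x,h^\ve)$, orthogonal projection onto $\tilde H(\o^\ve)$ in $\hrm$, and $P_1=\pd_\ve\pi^\ve|_0$, differentiation of $(\pi^\ve)^2=\pi^\ve$ applied to $h$ gives $\pi_0 P_1 h=0$, and combined with $dh(\o)v=k$ this yields the crucial relation $(I-\pi_0)k=P_1 h$. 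Now $I(\o^\ve)=\<h^\ve,\pi^\ve h^\ve\>$, and a second-order expansion, using $\pi_0 h=h$, $h\in K^\perp$, $\th''(0)(k,k)\in K^\perp$, and $(I-\pi_0)k=P_1 h$ to cancel the projection-derivative terms, produces $\pd_\ve^2 I(\o^\ve)|_0=2\|k\|^2+2\<h,\th''(0)(k,k)\>$, hence the claimed formula for $Q_\xi$. Continuity follows from $\|k\|\le C\|v\|_\xi$ (Proposition \ref{2.2}) and boundedness of $\th''(0)$ as a symmetric bilinear form on $K$ from the implicit function theorem in Hilbert space, so $|Q_\xi(v)|\le C'\|v\|_\xi^2$, and density of $\ttohxy$ in $\tohxy$ gives the unique continuous extension.

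The main obstacle is the second-order bookkeeping: since $h^\ve$ is not in general the canonical control for $\o^\ve$, the projection $\pi(x,h^\ve)$ onto the varying subspace $\tilde H(\o^\ve)$ contributes order-$\ve^2$ terms to $I(\o^\ve)=\|\pi(x,h^\ve)h^\ve\|^2$, and one cannot simply equate this with $\|h^\ve\|^2$. Exploiting the intrinsic identity $dh(\o)v=k(\xi,v)$, the derived projection relation $(I-\pi_0)k=P_1 h$, and the first-order critical-point condition $h\in K^\perp$ is exactly what makes all these spurious contributions cancel, leaving the clean formula $Q_\xi(v)=\|k\|^2+\<h,\th''(0)(k,k)\>$; this is the adaptation of Bismut's computation to the present intrinsic setting.
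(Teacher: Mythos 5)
Your construction of $\xi^\ve$ (set $h^\ve=h+\ve k+\th(\ve k)$, $\o^\ve=\phi(x,h^\ve)$, and take for $p^\ve_t$ the minimal-norm solution of $a(\o^\ve_t)p^\ve_t=\dot\o^\ve_t$) is essentially the paper's first step, and your observation that $h^\ve$ is not the canonical control of $\o^\ve$ correctly identifies where the subtlety lies. However, your route through the second variation has a gap that undermines the argument.

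The gap is the regularity of $\pi^\ve=\pi(x,h^\ve)$. You write $I(\o^\ve)=\<h^\ve,\pi^\ve h^\ve\>$ and then differentiate $\pi^\ve$ twice, introducing $P_1=\pd_\ve\pi^\ve|_0$, the projection identity $\pi_0 P_1 h=0$, and the relation $(I-\pi_0)k=P_1 h$. But $\pi^\ve$ acts on $k\in\hrm$ by $(\pi^\ve k)_t=\int_0^t p_X(\o^\ve_s)\dot k_s\,ds$, where $p_X(z)$ is the orthogonal projection onto $(\ker X(z))^\perp$; this pointwise projection is generically \emph{discontinuous} in $z$ at points where $\operatorname{rank}X$ jumps, and the paper explicitly refuses to assume constant rank (see the discussion around \eqref{SREX} and the footnote preceding Proposition \ref{2.1}). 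So $P_1$, $P_2$, and hence your ``crucial relation'' are not defined, and the cancellation argument cannot even be started. The same difficulty is lurking in your first-variation computation: differentiating $\int\<p^\ve_t,a(\o^\ve_t)p^\ve_t\>dt$ under the integral requires the integrand $|X(\o^\ve_t)^*p^\ve_t|^2$ (a pseudoinverse applied to $\dot\o^\ve_t$) to vary smoothly in $\ve$ for a.e.\ $t$, which is not established.

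The paper's proof avoids this issue entirely, and in a way that also handles the ``for any such map $\xi^\ve$'' clause which your proof does not address. Given \emph{any} $\xi^\ve=(\o^\ve,p^\ve)$ satisfying (i)--(ii), it defines the canonical control $\dot h^{\ve,\ell}_t=\<p^\ve_t,X_\ell(\o^\ve_t)\>$, for which $I(\o^\ve)=\|h^\ve\|^2$ with no projection, verifies $\|h^\ve-h-\ve k\|\le C\ve^2$ from (ii), and then uses the implicit function theorem to write $h^\ve=h+\ve k^\ve+\th(\ve k^\ve)$ with $\|k^\ve-k\|=O(\ve)$; the expansion of $\|h+\ve k^\ve+\th(\ve k^\ve)\|^2$ then gives both $L(v)$ and $Q_\xi(v)$ cleanly, using only $h\in K^\perp$, $k^\ve\in K$ and $\th(\ve k^\ve)\in K^\perp$. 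You could repair your argument by replacing your $h^\ve$ with the canonical control $h(\o^\ve)$ computed from your constructed $\xi^\ve$, and then following this IFT route rather than differentiating the projection. Finally, your verification of the $L^2(a)$ estimate in (ii) is asserted rather than proved; the paper's explicit identity for $\dot g^{\ve,\ell}_t$ and the bound $\int|\dot g^\ve_t|^2dt\le C\ve^4$ is the content that is being skipped.
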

\begin{proof}
Fix a sub-Riemannian structure $X$ for $a$.
We will use $X$ to construct a map $\xi^\ve$ having the claimed properties. 
Set $h=h(\o)$ and $k=k(\xi,v)$.
Since $v_1=0$, we have $k\in K$.
For $\ve\in(-1,1)$, set $h^\ve=h+\ve k+\th(\ve k)$ and set $\o^\ve=\phi(x,h^\ve)$.
Then $\o^\ve\in\hxy$ for all $\ve$.
Moreover, the map $\ve\mapsto\o^\ve:(-1,1)\to\O$ is $C^1$ and there is a constant $C<\infty$ such that
$$
\sup_{t\in[0,1]}|\o_t^\ve-\o_t-\ve v_t|\leq C\ve^2.
$$
Moreover, fixing a chart along $\o$ and writing $\xi=(\o,p)$ and $\eta=(\o,q)$, we have 
$$
\dot\o_t^\ve-a(\o_t^\ve)(p_t+\ve q_t)=X_\ell(\o_t^\ve)\dot g_t^{\ve,\ell}
$$
where
\begin{align*}
\dot g_t^{\ve,\ell}&=\dot h_t^\ell+\ve \dot k_t^\ell+\dot\th_t^\ell(\ve k)
-\<p_t+\ve q_t,X_\ell(\o_t^\ve)\>\\
&=-\left\<p_t,X_\ell(\o_t^\ve)-X_\ell(\o_t)-\ve\nabla X_\ell(\o_t)v_t\right\>
-\ve\<q_t,X_\ell(\o_t^\ve)-X_\ell(\o_t)\>+\dot\th_t^\ell(\ve k).
\end{align*}
Since $\xi$ and $\eta$ are tame, there is a constant $C<\infty$ such that, for all $\ve$, we have
$$
\int_0^1 |\dot g_t^\ve|^2dt\leq C\ve^4
$$
so we can find a measurable map $r_t^\ve$ such that
$$
X_\ell(\o_t^\ve)\dot g_t^{\ve,\ell}=a(\o_t^\ve)r_t^\ve
$$
and
$$
\int_0^1\<r_t^\ve,a(\o_t^\ve)r_t^\ve\>dt\leq C\ve^4.
$$
If we now set $p_t^\ve=p_t+\ve q_t+r_t^\ve$, then $\xi_t^\ve=(\o_t^\ve,p_t^\ve)$
has the required properties.

Suppose now, more generally, that $\xi^\ve=(\o^\ve,p^\ve)$ and $\eta=(\o,q)$ are maps having the properties described in the statement. 
Define $h^\ve\in\hrm$ by
$$
\dot h_t^{\ve,\ell}=\<p_t^\ve,X_\ell(\o_t^\ve)\>.
$$
Then
\begin{align*}
&\dot h_t^{\ve,\ell}-\dot h_t^\ell-\ve \dot k_t^\ell\\
&\quad=\<p_t^\ve-p_t-\ve q_t,X_\ell(\o_t^\ve)\>+\ve\<q_t,X_\ell(\o_t^\ve)-X_\ell(\o_t)\>+\<p_t,X_\ell(\o_t^\ve)-X_\ell(\o_t)-\ve\nabla X_\ell(\o_t)v_t\>
\end{align*}
so, since $\xi$ and $\eta$ are tame, there is a constant $C<\infty$ such that, for all $\ve$, we have
$$
\|h^\ve-h-\ve k\|\leq C\ve^2.
$$
Write $k^\ve$ for the orthogonal projection of $\ve^{-1}(h^\ve-h)$ onto $K$. 
Since $k\in K$, we have $\|k^\ve-k\|\le\|\ve^{-1}(h^\ve-h)-k\|\le C\ve$. 
Since $\phi_1(x,h^\ve)=y$, we have, for $\ve$ sufficiently small,
$$
h^\ve=h+\ve k^\ve+\th(\ve k^\ve).
$$
Hence, as $\ve\ra 0$,
$$
I(\o^\ve)-I(\o)=\|h+\ve k^\ve+\th(\ve k^\ve)\|^2-\|h\|^2
=2\ve\<h,k\>+O(\ve^2).
$$
Hence $\ve\mapsto I(\o^\ve)$ is differentiable at $\ve=0$ with derivative $L(v)=2\<h,k\>$.
By Proposition \ref{2.2}, we have $\|k(\xi,v)\|\le C\|v\|_\xi$, so $L$ is continuous on $\ttohxy$, which is dense in $\tohxy$, so $L$ extends uniquely to $\tohxy$.

Now, if $\o$ is minimal, we must have $L(v)=0$ for all $v\in\ttohxy$ and hence for all $v\in\tohxy$.
So, for any $k'\in K$, we have
$$
\<h,k'\>=\<h,k(\xi,v(k'))\>=L(v(k'))=0
$$
and so $h\in K^\perp$.
Then, for $\o^\ve$ as above, as $\ve\ra 0$, we have
$$
I(\o^\ve)-I(\o)=\ve^2\|k^\ve\|^2+2\<h,\th(\ve k^\ve)\>+\|\th(\ve k^\ve)\|^2=\ve^2\{\|k\|^2+\<h,\th''(0)(k,k)\>\}+O(\ve^3).
$$
This shows that $\ve\mapsto I(\o^\ve)$ is twice differentiable at $\ve=0$ with the claimed second derivative, which is then continuous on $\ttohxy$
and extends uniquely to $\tohxy$.
\end{proof}

Note that, since $L(v)$ and $Q_\xi(v)$ can be computed either by choosing a suitable family of paths $\ve\mapsto\xi^\ve$ or by choosing a sub-Riemannian structure $X$, they depend on neither choice.
For $\g\in\hxy$ tame, regular and minimal, we define a continuous quadratic form $Q$ on $\tghxy$ by
$$
Q(v)=\inf_{\xi}Q_\xi(v)
$$
where the infimum is taken over all tame driving paths $\xi$.

\begin{proposition}\label{2.4}
Let $\g\in H^{x,y}$ be tame, regular and minimal. 
Then there exists a unique tame driving path $\l$ such that $Q=Q_\l$. 
The path $\l$ is a bicharacteristic and is the only bicharacteristic which is a driving path for $\g$.
Moreover, given a sub-Riemannian structure $X$ for $a$, we have
$$
Q_\xi(v)=Q(v)+\|k(\xi,v)-k(\l,v)\|^2,\q  Q(v)=q(k(\l,v))
$$
where $q$ is the quadratic form on $K=\ker(\pd/\pd h)\phi_1(x,h(\g))$ given by
$$
q(k)=\|k\|^2-\left\<\l_1,\frac{\pd^2}{\pd h^2}\phi_1(x,h(\g))(k,k)\right\>.
$$
\end{proposition}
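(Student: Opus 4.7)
The plan is to identify $\l$ as the unique bicharacteristic above $\g$, to recast $Q_\xi(v)$ in terms of $\l_1 \in T^*_yM$, and then to complete the square in $k(\xi,v)$.

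I will first construct $\l$. By Proposition 2.3, minimality of $\g$ gives $h := h(\g) \in K^\perp$, so by regularity there is a unique $\l_0 \in T^*_xM$ with $\dot h^\ell_t = \langle \l_0, u_t^{-1}X_\ell(\g_t)\rangle$. Setting $\l_t := (u_t^{-1})^*\l_0$, the ODE for $u_t$ yields that $\l_t$ satisfies \eqref{BCF}, so $\l$ is a bicharacteristic; and $\langle \l_t, X_\ell(\g_t)\rangle = \dot h^\ell_t$ gives $a(\g_t)\l_t = \dot\g_t$, making $\l$ a continuous, hence tame, driving path. Uniqueness of $\l$ as a bicharacteristic driving path follows from the linearity of the bicharacteristic equation above $\g$: any other $\l'$ has $\l'_t = (u_t^{-1})^*\l'_0$, and the requirement $\langle \l'_t - \l_t, X_\ell(\g_t)\rangle = 0$ forces $\l'_0 = \l_0$ by invertibility of $C_1(\g)$.

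Next I rewrite $Q_\xi$. For any $k \in K$, $\th''(0)(k,k) \in K^\perp$ and, by \eqref{kk}, $v_1(\th''(0)(k,k)) = -(\pd^2/\pd h^2)\phi_1(x,h)(k,k)$; identity \eqref{ui} applied to $h \in K^\perp$ then gives
\begin{equation*}
\langle h, \th''(0)(k,k)\rangle = \langle \l_0, u_1^{-1}v_1(\th''(0)(k,k))\rangle = -\langle \l_1, (\pd^2/\pd h^2)\phi_1(x,h)(k,k)\rangle,
\end{equation*}
where $\l_1 := (u_1^{-1})^*\l_0 \in T^*_yM$. Substituted into the formula of Proposition 2.3, this yields $Q_\xi(v) = q(k(\xi,v))$ for every tame driving path $\xi$, with $q$ as in the statement; the formula extends by continuity to $v \in \tghxy$. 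The crux now is the behaviour of $q$ on $\{k \in K : v(k) = v\} = k_0 + K_0$, where $k_0 := k(\l,v)$ and $K_0 := \{k \in \hrm : v(k) \equiv 0\}$. For $\delta \in K_0$ I will establish two identities. First, $(\pd^2/\pd h^2)\phi_1(x,h)(\delta,\delta) = 0$: differentiating the flow $\phi(x,h+\epsilon\delta)$ twice at $\epsilon=0$ and using $v(\delta)\equiv 0$ yields a linear homogeneous ODE for $W_t := (\pd^2/\pd h^2)\phi_t(\delta,\delta)$ with $W_0 = 0$. Second, $\langle k_0,\delta\rangle = \langle \l_1,(\pd^2/\pd h^2)\phi_1(x,h)(k_0,\delta)\rangle$: writing $\dot k_0^\ell = \langle \l_t, \nabla X_\ell(\g_t)v_t\rangle + \langle \eta^{(\l)}_t, X_\ell(\g_t)\rangle$ with $v = v(k_0)$ and integrating against $\dot\delta^\ell_t$, the first piece yields the right-hand side via variation of constants together with $\l_t = (u_t^{-1})^*u_1^*\l_1$, while the second piece integrates $\langle \eta^{(\l)}_t, \sum_\ell X_\ell(\g_t)\dot\delta^\ell_t\rangle$, which vanishes because $\sum_\ell X_\ell(\g_t)\dot\delta^\ell_t = \dot v(\delta)_t - \sum_\ell \nabla X_\ell(\g_t)v(\delta)_t\dot h^\ell_t = 0$. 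Expanding $q(k_0 + \delta)$ and using these identities collapses the linear terms to give $q(k_0 + \delta) = q(k_0) + \|\delta\|^2$; hence $Q(v) = q(k_0)$ is attained uniquely at $k_0$, and $Q_\xi(v) - Q(v) = \|k(\xi,v) - k(\l,v)\|^2$.

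Finally, to see that $\l$ is the unique tame driving path with $Q_\l = Q$: any other such $\xi$ forces $k(\xi,v) = k(\l,v)$ for every $v \in \tghxy$; with $\zeta := \xi - \l$ (which lies in $\ker a(\g_t)$ a.e., since both $\xi,\l$ drive $\g$), the explicit form of $\dot k^\ell(\xi,v) - \dot k^\ell(\l,v)$ combined with a family of variations $v$ realising arbitrary $v_{t_0} \in T_{\g_{t_0}}M$ will force $\zeta_t = 0$ a.e. The hard part will be precisely this last step: the non-constant-rank setting leaves a residual freedom $\zeta \in \ker a(\g_t)$, and the bracket condition \eqref{LAD} must enter through the infinitesimal derivatives of the $X_\ell$'s along $\g$ to exhaust the remaining directions in a manner intrinsic to $a$, independent of the choice of sub-Riemannian structure.
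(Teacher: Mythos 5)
Your proof reproduces the paper's argument: construct $\l$ as the unique normal bicharacteristic from $h=h(\g)\in K^\perp$ plus regularity; rewrite $Q_\xi(v)=q(k(\xi,v))$ via \eqref{kk} and \eqref{ui}; and complete the square on $\{k\in K:v(k)=v\}=k_0+K_0$. Your two auxiliary identities, $\tfrac{\pd^2}{\pd h^2}\phi_1(x,h)(\d,\d)=0$ and $\<k_0,\d\>=\<\l_1,\tfrac{\pd^2}{\pd h^2}\phi_1(x,h)(k_0,\d)\>$ for $\d\in K_0$, are precisely the paper's $q(\d)=\|\d\|^2$ and $q(k(\l,v),\d)=0$; the paper derives the first from $\phi(x,h+\ve\d)=\g$ for all $\ve$ (using $\sum_\ell X_\ell(\g_t)\dot\d_t^\ell=0$, as you also observe) and the second by differentiating the explicit flow identity \eqref{k} in the direction $k(\l,v)$, which packages the variation-of-constants computation you sketch --- in which, incidentally, the $\pd_\ve u_t^\ve$ terms drop out only because $\sum_\ell X_\ell(\g_t)\dot\d_t^\ell=0$, a point worth making explicit. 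As to your flagged concern about uniqueness of the $Q$-minimizing tame driving path: the paper's proof in fact establishes uniqueness only among \emph{bicharacteristics} driving $\g$, and, like you, does not verify that $k(\xi,\cdot)=k(\l,\cdot)$ forces $\xi=\l$ a.e.\ for a general tame $\xi$. That stronger uniqueness is not used later in Sections \ref{SUB} or \ref{QG}, so your honesty in flagging it rather than fabricating an argument is appropriate and identifies a genuine subtlety left implicit in the paper.
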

\begin{proof}
Choose a sub-Riemannian structure $X$ for $a$ and define as above
$$
h=h(\g),\q u_t=\frac\pd{\pd x}\phi_t(x,h),\q K=\ker\frac\pd{\pd h}\phi_1(x,h).
$$
Then, by Proposition \ref{2.3}, we have $h\in K^\perp$, so there exists a unique $\l_0\in T^*_xM$ such that
$$
\dot h_t^\ell=\<\l_t,X_\ell(\g_t)\>
$$
for almost all $t$, where $\l_t=(u_t^{-1})^*\l_0$. 
Fix a chart along $\g$ and write $\l_t=(\g_t,p_t)$.
Then
$$
\dot\g_t=\sum_{\ell=1}^mX_\ell(\g_t)\dot h_t^\ell,\q \dot p_t=-\sum_{\ell=1}^m\<p_t,\nabla X_\ell(\g_t)\>\dot h_t^\ell
$$
so $\l$ is a bicharacteristic.
Suppose, on the other hand, that $\xi=(\g,q)$ is a bicharacteristic over $\g$ and define $k\in\hrm$ by $\dot k_t^\ell=\<\xi_t,X_\ell(\g_t)\>$.
Then $\g=\phi(x,k)$ and $I(\g)=\|k\|^2$ and
$$
\dot q_t=-\sum_{\ell=1}^m\<q_t,\nabla X_\ell(\g_t)\>\dot k_t^\ell
$$
so $k=h(\g)$ and $q=p$, proving uniqueness.

Write $h$ for $h(\g)$ and recall that $\dot h_t^\ell=\<\l_0,u_t^{-1}X_\ell(\g_t)\>$ and $\l_1=(u_1^{-1})^*\l_0$.
Then, by \eqref{kk} and \eqref{ui}, we have
\begin{equation}\label{ix}
q(k)=\|k\|^2+\<h,\th''(0)(k,k)\>.
\end{equation}
So $q(k(\xi,v))=Q_\xi(v)$ by Proposition \ref{2.3}.
Take now $v=v(k)$ and set $k'=k-k(\l,v)$.
Then $(\pd/\pd h)\phi(x,h)k'=0$, so $\sum_{\ell=1}^mX_\ell(\g_t)(\dot k_t')^\ell=0$ for almost all $t$, and so $\phi(x,h+\ve k')=\g$ for all $\ve\in\R$. 
Hence $q(k')=\|k'\|^2$ and it will suffice to show that $q(k(\l,v),k')=0$.
Recall that $\dot k_t^\ell(\l,v)=\<\l_t,\nabla X_\ell(\g_t)v_t\>+\<\eta_t,X_\ell(\g_t)\>$.
We differentiate the identity
\begin{equation}
\label{k}
\left(\frac{\pd}{\pd x}\phi_1(x,h)\right)^{-1}
\frac{\pd}{\pd h}\phi_1(x,h)k'
=\sum_{\ell=1}^m\int_0^1\left(\frac{\pd}{\pd x}\phi_t(x,h)\right)^{-1}X_\ell(\phi_t(x,h))(\dot k_t')^\ell\, dt
\end{equation}
in $h$, in the direction $k(\l,v)$, to obtain
$$
u_1^{-1}\frac{\pd^2}{\pd h^2}\phi_1(x,h)(k(\l,v),k')
=\sum_{\ell=1}^m\int_0^1 u_t^{-1}\nabla X_\ell(\g_t)v_t(\dot k_t')^\ell\, dt.
$$
Hence
$$
\left\<\l_1,\frac{\pd^2}{\pd h^2}\phi_1(x,h)(k(\l,v),k')\right\>
=\sum_{\ell=1}^m\int_0^1\<\l_t,\nabla X_\ell(\g_t)v_t\>(\dot k_t')^\ell\, dt=\<k(\l,v),k'\>.
$$
This shows that $q(k(\l,v),k')=0$ as required.
\end{proof}

Note that our extra condition of tameness, under which regularity of finite-energy paths is intrinsic to $a$, does not exclude any normal minimal paths as these all have smooth driving paths.

\section{Characterization of the cut locus and fluctuation measure by the energy function}\label{QG}
Throughout this section, $\g$ is a regular tame minimal path in $H^{x,y}$, $\l$ is the unique bicharacteristic projecting to $\g$ and $Q$ is the quadratic form on $\tghxy$ defined in the preceding section. 
Fix $s\in[0,1]$ and $\eta_s\in\ctgsm$. 
For $t\in[s,1]$, define $J_{ts}:\ctgsm\ra\tgtm$ by
$$
J_{ts}\eta_s
=\left.\frac{\pd}{\pd\ve}\right|_{\ve=0}\pi\psi_{t-s}(\l_s+\ve \eta_s)
$$
where $(\psi_t)_{t\in\R}$ is the bicharacteristic flow.
We already defined $J_t=J_{t0}$ by equation \eqref{JFE} above.
Set $v_t=J_{ts}\eta_s$.
Given a sub-Riemannian structure $X$ for $a$, the following equations determine $\phi_{ts}(x,h)$ and $\dot h_t$, for $t\in[s,1]$ and $x$ near $\g_t$, 
as functions of $\l_s\in\ctgsm$
$$
\phi_{ss}(x,h)=x,\q
\dot\phi_{ts}(x,h)=\sum_{\ell=1}^mX_\ell(\phi_{ts}(x,h))\dot h_t^\ell,\q
\dot h_t^\ell=\left\<\l_s,\left(\frac\pd{\pd x}\phi_{ts}(\g_s,h)\right)^{-1}X_\ell(\phi_{ts}(\g_s,h))\right\>.
$$
On differentiating in $\l_s$ in the direction $\eta_s$, we find that $v_t=(\pd/\pd h)\phi_{ts}(\g_s,h)k$, where $k$ satisfies, for $t\in[s,1]$,
\begin{equation}\label{ak}
\dot k_t^\ell =\<\eta_s,u_{ts}^{-1}X_\ell(\g_t)\>+A_{ts}^\ell(\l,k).
\end{equation}
Here, $u_{ts}=(\pd/\pd x)\phi_{ts}(\g_s,h)=u_tu_s^{-1}$ and
$$
A_{ts}^\ell(\l,k)
=\left\<\l_s,\frac{\pd}{\pd h}\left[\left(\frac{\pd}{\pd x}\phi_{ts}(\g_s,h) \right)^{-1}X_\ell(\phi_{ts}(\g_s,h))\right]k\right\>.
$$
By writing a differential equation for $A_{ts}^\ell(\l,k)$, we find a constant $C<\infty$ such that, for all $t\in[s,1]$,
$$
|A_{ts}(\l,k)|^2\leq C\int_s^t|\dot k_r|^2\, dr.
$$
It follows that the equation \eqref{ak} uniquely determines $k$.
Note that, if we set $v_t=0$ and $\dot k_t=0$ for $t\in[0,s)$, then we have, for all $t\in[0,1]$,
\begin{equation}\label{al}
v_t=\frac\pd{\pd h}\phi_t(x,h)k, \q 
\dot k_t^\ell =\<u_s^*\eta_s,u_t^{-1}X_\ell(\g_t)\>1_{\{t\ge s\}}+A_{t0}^\ell(\l,k).
\end{equation}

\begin{proposition}\label{2.5}
Let $\g\in\hxy$ be tame, regular and minimal and let $v\in\tghxy$.
Then $Q(v)=0$ if and only if $v_t=J_t\eta_0$ for all $t\in[0,1]$, for some $\eta_0\in\ctxm$.
\end{proposition}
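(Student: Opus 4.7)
The plan is to identify $\ker Q$ with $\{(J_t\eta_0)_{t\in[0,1]}:\eta_0\in\ctxm,\,J_1\eta_0=0\}$ via the Euler--Lagrange equation for the quadratic form $q$ from Proposition \ref{2.4}. Since $\g$ is minimal, $Q\ge0$ on $\tghxy$, so $Q(v)=0$ is equivalent to $Q(v,w)=0$ for every $w\in\tghxy$. Polarising the identity $Q(v)=q(k(\l,v))$ from Proposition \ref{2.4} and using that $w\mapsto k(\l,w)$ carries $\tghxy$ onto $K\cap\pi(x,h(\g))\hrm$, this reduces to the requirement that $k=k(\l,v)$ be critical for $q(\cdot,k)$ on $K$, i.e., $q(k,k')=0$ for all $k'\in K$.

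Next I would translate this critical-point condition into an ODE. By regularity, $(\pd/\pd h)\phi_1(x,h(\g)):\hrm\to\tym$ is surjective with kernel $K$, so the Lagrange multiplier theorem in Hilbert space produces a $\mu\in\ctym$ with
$$
q(k,k')=\left\<\mu,\frac{\pd}{\pd h}\phi_1(x,h(\g))k'\right\>\quad\text{for all }k'\in\hrm.
$$
Expanding $q$ via its definition and the explicit expression for $(\pd^2/\pd h^2)\phi_1(x,h(\g))(k,k')$ obtained by differentiating the flow ODE for $\phi$ twice, then integrating by parts using the Hamilton equations
$$
\dot p_t=-\sum_{\ell=1}^m\<p_t,\nabla X_\ell(\g_t)\>\dot h_t^\ell
$$
for $\l_t=(\g_t,p_t)$, this identity reduces to an ODE of the form
$$
\dot k_t^\ell=\<\eta_0,u_t^{-1}X_\ell(\g_t)\>+A_{t0}^\ell(\l,k),
$$
with $\eta_0=u_1^*\mu\in\ctxm$. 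This is exactly equation \eqref{al} with $s=0$, whose solution $k$ satisfies $v(k)=J\eta_0$ by the discussion leading to \eqref{al}.

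The direction $(\Leftarrow)$ is obtained by running the same computation backwards: from $v=J\eta_0$, equation \eqref{al} supplies a $k$ with $v=v(k)$, and the same integration by parts shows $q(k,k')=0$ for every $k'\in K$, so $Q(v)=q(k)=0$. The constraint $J_1\eta_0=0$ is automatic since $v\in\tghxy$ forces $v_1=0$. The main obstacle is the integration-by-parts step: one must carefully match the terms produced by differentiating the flow ODE for $\phi$ twice against the Hamilton equation for $\l$, and keep track of the ambiguity in lifting $v$ to a control (the kernel of $\pi(x,h(\g))$) so that the $k$ produced by the multiplier argument can be taken in the image of $\pi(x,h(\g))$, where the identification $v=v(k)$ is unambiguous and the reduction from $q$ on $\hrm$ to $q$ on $K$ is consistent.
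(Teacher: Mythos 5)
Your proposal is correct and follows essentially the same route as the paper: reduce $Q(v)=0$ (via nonnegativity of $Q$) to the Euler--Lagrange condition $q(k,k')=0$ for all $k'\in K$, recognize this as membership in $K^\perp$ of the path $m$ with $\dot m_t^\ell=\dot k_t^\ell-A_{t0}^\ell(\l,k)$, and read off the resulting ODE as equation \eqref{al} with $s=0$, so that $v_t=J_t\eta_0$. The paper works directly with the explicit description of $K^\perp$ rather than invoking Lagrange multipliers, but these are the same argument. Two small imprecisions worth noting: the formula $\eta_0=u_1^*\mu$ ignores the contribution from differentiating $u_1^{-1}$ in $h$ when you polarize $q$ off $K$ (on $K$ this term drops because $\frac{\pd}{\pd h}\phi_1 k'=0$, which is why the paper's \eqref{DIFF} only claims the representation for $k'\in K$); and in the converse your ``$Q(v)=q(k)$'' should be ``$Q(v)\le q(k)$'', which suffices since one shows $q(k)=q(k(\l,v))+\|k-k(\l,v)\|^2$ for any $k$ with $v(k)=v$, using the vanishing of $q(k(\l,v),\cdot)$ on $K_0=\{k':v(k')=0\}$ established in the proof of Proposition \ref{2.4}.
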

\begin{proof}
Choose a sub-Riemannian structure $X$ for $a$.
Suppose that $Q(v)=0$.
Set $k=k(\l,v)$, then $q(k)=0$. 
On differentiating the identity \eqref{k} in $h$, in the direction $k$, we obtain
$$
u_1^{-1}\frac{\pd^2}{\pd h^2}\phi_1(x,h)(k,k')
=\sum_{\ell=1}^m\int_0^1 (\dot k_t')^\ell\frac{\pd}{\pd h}\left[\left(\frac{\pd}{\pd x}\phi_t(x,h) \right)^{-1}X_\ell(\phi_t(x,h))\right]k\, dt
$$
so, by Proposition \ref{2.4},
\begin{equation}\label{DIFF}
q(k,k')=\sum_{\ell=1}^m\int_0^1 (\dot k_t')^\ell\left\{\dot k_t^\ell
-\left\<\l_0,\frac{\pd}{\pd h}\left[\left(\frac{\pd}{\pd x}\phi_t(x,h) \right)^{-1}X_\ell(\phi_t(x,h))\right]k\right\>\right\}\, dt.
\end{equation}
Since $\g$ is minimal, $q$ is non-negative on $K$, so $q(k,k')=0$ for all $k'\in K$.
Hence there exists an $\eta_0\in\ctxm$ such that
$$
\dot k_t^\ell=\<\eta_0,u_t^{-1}X_\ell(\g_t)\>+A_{t0}^\ell(\l,k).
$$
As we argued above, this forces $v_t=J_t\eta_0$ for all $t$.
On the other hand, if $v_t=J_t\eta_0$ then, by the same calculations, we see that $v=(\pd/\pd h)\phi(x,h)k$ where $k$ satisfies $q(k,k')=0$ for all $k'\in K$.
But $k\in K$, so $Q(v)\le q(k,k)=0$.
\end{proof}

\begin{proposition}\label{2.6}
Let $\g\in\hxy$ be tame, regular and minimal and let $\eta_0\in \ctxm$.
Then $J_t\eta_0=0$ for all $t$ only if $\eta_0=0$.
\end{proposition}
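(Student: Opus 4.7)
The plan is to reduce the statement to the non-degeneracy of the deterministic Malliavin covariance matrix $C_1(\g)$ defined in \eqref{mat}: specifically, I would show that $J_t\eta_0\equiv 0$ on $[0,1]$ implies $\<\eta_0,C_1(\g)\eta_0\>=0$. Since regularity of $\g$ means precisely that $C_1(\g)$ is invertible, this forces $\eta_0=0$.

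First, fix a sub-Riemannian structure $X$ for $a$ and work in a chart along $\g$; compactness of $\g([0,1])$ permits the general case by covering $\g$ with finitely many charts, because the objects $J_t$, $u_t$ and $C_1(\g)$ are intrinsic. In coordinates write $\l_t=(\g_t,p_t)$ and consider the linearisation
\begin{equation*}
(\d x_t,\d p_t):=\left.\frac{\pd}{\pd\ve}\right|_{\ve=0}\psi_t(\l_0+\ve\eta_0).
\end{equation*}
By \eqref{JFE}, $J_t\eta_0=\d x_t$, and $(\d x_t,\d p_t)$ satisfies the linearisation of the bicharacteristic system \eqref{BCF} along $\l$ with initial condition $(0,\eta_0)$.

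Now assume $\d x\equiv 0$. Linearising the first equation in \eqref{BCF} and setting $\d x=0$ yields
\begin{equation*}
0=\d\dot x_t=\sum_{\ell=1}^m\<\d p_t,X_\ell(\g_t)\>X_\ell(\g_t)=a(\g_t)\d p_t\q\text{for all }t\in[0,1],
\end{equation*}
which, since $a=XX^*$, is equivalent to $\<\d p_t,X_\ell(\g_t)\>=0$ for every $\ell$ and $t$. Substituting both $\d x\equiv 0$ and these vanishing identities into the linearisation of the second equation in \eqref{BCF} collapses all but one term, leaving
\begin{equation*}
\d\dot p_t=-\sum_{\ell=1}^m\<p_t,X_\ell(\g_t)\>\<\d p_t,\nabla X_\ell(\g_t)\>=-\sum_{\ell=1}^m\dot h_t^\ell\<\d p_t,\nabla X_\ell(\g_t)\>,
\end{equation*}
where $h=h(\g)$. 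This is precisely the ODE for $t\mapsto(u_t^{-1})^*\eta_0$, so by uniqueness $\d p_t=(u_t^{-1})^*\eta_0$ throughout $[0,1]$.

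Combining the two conclusions, $\<\eta_0,u_t^{-1}X_\ell(\g_t)\>=\<(u_t^{-1})^*\eta_0,X_\ell(\g_t)\>=\<\d p_t,X_\ell(\g_t)\>=0$ for all $\ell$ and $t$, whence
\begin{equation*}
\<\eta_0,C_1(\g)\eta_0\>=\int_0^1\sum_{\ell=1}^m\<\eta_0,u_t^{-1}X_\ell(\g_t)\>^2dt=0,
\end{equation*}
and regularity of $\g$ gives $\eta_0=0$. I do not anticipate any serious obstacle in carrying this out; the main delicacy is the term-by-term collapse of the linearised $\dot p$ equation once the substitutions $\d x\equiv 0$ and $\<\d p,X_\ell\>\equiv 0$ are made, but this is a direct check against \eqref{BCF}.
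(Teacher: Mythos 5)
Your proof is correct and follows essentially the same route as the paper's. Both arguments linearize the bicharacteristic flow at $\l_0$, use $\d x\equiv 0$ together with the linearized $\dot x$--equation to get $a(\g_t)\,\d p_t=0$ and hence $\<\d p_t,X_\ell(\g_t)\>=0$, and then identify $\d p_t=(u_t^{-1})^*\eta_0$ in order to conclude $\<\eta_0,u_t^{-1}X_\ell(\g_t)\>=0$ for all $t,\ell$, which by regularity (invertibility of $C_1(\g)$) forces $\eta_0=0$. The only cosmetic difference is that the paper obtains the identity $\d p_t=(u_t^{-1})^*\eta_0$ by first writing $p_t^\ve=((u_t^\ve)^{-1})^*(\l_0+\ve\eta_0)$, showing $(\pd/\pd\ve)|_{\ve=0}u_t^\ve=0$, and differentiating that formula, whereas you derive it directly from the collapsed linearized $\dot p$--equation and ODE uniqueness; these are the same computation packaged differently.
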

\begin{proof}
Fix a sub-Riemannian structure $X$ for $a$ and a chart along $\g$.
Write $\psi_t(\l_0+\ve\eta_0)=(\g^\ve_t,p^\ve_t)$ and $\dot h^{\ve,\ell}_t=\<p_t^\ve,X_\ell(\g^\ve_t)\>$.
Set 
$$
v_t=J_t\eta_0=\left.\frac\pd{\pd\ve}\right|_{\ve=0}\g^\ve_t,\q
r_t=\left.\frac\pd{\pd\ve}\right|_{\ve=0}p^\ve_t,\q
\dot k^\ell_t=\left.\frac\pd{\pd\ve}\right|_{\ve=0}\dot h^{\ve,\ell}_t.
$$
By differentiating the bicharacteristic equations we obtain, for all $t\in[0,1]$,
$$
\dot v_t=\sum_{\ell=1}^m\nabla X_\ell(\g_t)v_t\dot h_t^\ell+X_\ell(\g_t)\dot k_t^\ell,\q
\dot k_t^\ell=\<r_t,X_\ell(\g_t)\>+\<p_t,\nabla X_\ell(\g_t)v_t\>.
$$
But $v_t=J_t\eta_0=0$ for all $t$ so $\sum_{\ell=1}^mX_\ell(\g_t)\dot k_t^\ell=0$ and so $|\dot k_t|^2=\sum_{\ell=1}^m\<r_t,X_\ell(\g_t)\>^2=0$ for all $t$.
Now 
\begin{equation}\label{ELL}
\dot h_t^{\ve,\ell}=\<p_t^\ve,X_\ell(\g_t^\ve)\>=\<\l_0+\ve\eta_0,(u_t^\ve)^{-1}X_\ell(\g_t^\ve)\>
\end{equation}
where 
$$
\dot u_t^\ve=\nabla X_\ell(\g_t^\ve)u_t^\ve\dot h^{\ve,\ell}_t,\q u_0=\id. 
$$
By differentiating this equation, we see that $(\pd/\pd\ve)|_{\ve=0}u_t^\ve=0$, so on differentiating \eqref{ELL} we obtain
$$
\<\eta_0,u_t^{-1}X_\ell(\g_t)\>=0
$$
for all $t$, and this implies that $\eta_0=0$ since $\g$ is regular.
\end{proof}

The next result is a characterization of the cut locus by path properties intrinsic to $a$.
An analogous characterization in terms of a given sub-Riemannian structure is given in \cite[Th\'eor\`eme 1.18]{BA}. 
We note the use of Proposition \ref{2.6}. 
\begin{theorem}\label{2.7}
Let $x,y\in M$. The following are equivalent
\begin{itemize}
\item[{\rm (i)}] $(x,y)$ lies outside the cut locus,
\item[{\rm (ii)}] there is a unique minimal path $\g\in\hxy$, which is tame and regular, and the quadratic form $Q$ is positive-definite on $\tghxy$.
\end{itemize}
\end{theorem}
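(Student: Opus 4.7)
I will prove the two implications separately, using Propositions \ref{2.4}, \ref{2.5} and \ref{2.6} as the main ingredients. Throughout, the standing completeness assumption of this section allows us to identify minimality and strong minimality when the minimizer is unique: existence of a minimizer follows from Hopf--Rinow, and, as noted before Theorem \ref{VAR}, if $\g$ is the unique minimizer then for any relatively compact open $U\supset\g$ a weak compactness argument produces some $\d>0$ with $I(\o)\ge I(\g)+\d$ for every $\o\in\hxy$ leaving $U$. So in both implications we may pass freely between ``minimal'' and ``strongly minimal''.

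For (i)$\Rightarrow$(ii), by definition there is a unique strongly minimal $\g\in\hxy$ arising as the projection of a bicharacteristic $\l$ with $J_1$ invertible. Smoothness of the bicharacteristic flow makes $\l$ a tame driving path for $\g$, so $\g$ is tame. Regularity follows from formula \eqref{al}: every $J_t\eta_0$ equals $(\pd/\pd h)\phi_t(x,h(\g))k$ for a suitable $k\in\hrm$, so the image of $(\pd/\pd h)\phi_1(x,h(\g))$ contains the image of $J_1$, which by hypothesis is all of $\tym$. Finally, if $v\in\tghxy$ satisfies $Q(v)=0$, then Proposition \ref{2.5} supplies $\eta_0\in\ctxm$ with $v_t=J_t\eta_0$; the constraint $v_1=0$ forces $J_1\eta_0=0$, and invertibility of $J_1$ gives $\eta_0=0$, hence $v=0$.

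For (ii)$\Rightarrow$(i), Proposition \ref{2.4} supplies the unique bicharacteristic $\l$ projecting to $\g$, establishing condition \eqref{PBC}, while uniqueness of $\g$ as a (strongly) minimal path gives \eqref{UPM} by the remark above. The remaining point is invertibility of $J_1$. Suppose $J_1\eta_0=0$ for some $\eta_0\in\ctxm$, and set $v_t=J_t\eta_0$. Formula \eqref{al} together with Proposition \ref{2.2} shows that $v\in\tohx$; combined with $v_0=0$ and $v_1=J_1\eta_0=0$, this gives $v\in\tghxy$. Proposition \ref{2.5} then yields $Q(v)=0$; positive-definiteness of $Q$ forces $v=0$, i.e.\ $J_t\eta_0=0$ for all $t$; and Proposition \ref{2.6} finally concludes $\eta_0=0$. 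Thus $J_1$ is injective, and hence an isomorphism since $\dim\ctxm=\dim\tym=d$, establishing \eqref{NC}.

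The main obstacle, I anticipate, is not any single computation but the careful verification that the definitional mismatch between ``minimal'' in (ii) and ``strongly minimal'' in the definition of the cut locus is disposed of cleanly by completeness, and that the path $v_t=J_t\eta_0$ actually belongs to the intrinsic Hilbert space $\tghxy$ rather than merely vanishing at the endpoints. Both points are settled by reference to the constructions of Section \ref{SUB}, after which the equivalence follows by the bookkeeping outlined above.
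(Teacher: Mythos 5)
Your proof is correct and follows essentially the same route as the paper: both directions reduce to Propositions \ref{2.4}, \ref{2.5} and \ref{2.6}, with formula \eqref{al} used to see that $J_t\eta_0$ is a finite-energy variation and hence that the image of $J_1$ sits inside $\{v_1:v\in T_\g H^x\}$. You are more explicit than the paper on two small points that it glosses over — the identification of ``minimal'' with ``strongly minimal'' under the completeness assumption of Section \ref{SUB}, and the verification that $t\mapsto J_t\eta_0$ lies in $\tghxy$ rather than merely vanishing at both endpoints — but these are the same checks the paper relies on implicitly, so the arguments coincide in substance.
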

\begin{proof}
In both (i) and (ii) we have a unique minimal path $\g\in\hxy$.
By Proposition \ref{2.4}, if $\g$ is tame and regular then $\g$ is the projection of a bicharacteristic, while any projected bicharacteristic is tame.

Suppose that $Q$ is positive-definite and that $J_1\eta_0=0$ for some $\eta_0\in\ctxm$.
Then $v_t=J_t\eta_0\in\tghxy$ and $Q(v)=0$, so $J_t\eta_0=0$ for all $t$, and so $\eta_0=0$, by Proposition \ref{2.6}.
Hence $J_1$ is invertible.

Suppose, on the other hand, that $J_1$ is invertible. 
Since $\{v_1:v\in\tghx\}$ contains $\{J_1\eta_0:\eta_0\in\ctxm\}$, we see that $\g$ is regular.
Suppose further that $Q(v)=0$ for some $v\in\tghxy$.
By Proposition \ref{2.5}, we have $v_t=J_t\eta_0$ for some $\eta_0\in\ctxm$.
Then $J_1\eta_0=0$, so $\eta_0=0$, and so $v=0$.
Hence $Q$ is positive-definite.
\end{proof}

Recall from \eqref{JFE} that we define $K_t:\ctym\ra\tgtm$ by
$$
K_t\eta_1
=\left.\frac{\pd}{\pd\ve}\right|_{\ve=0}\pi\psi_{-(1-t)}(\l_1-\ve\eta_1)
$$
and, for $s,t\in[0,1]$ with $s\le t$, set
$$
C(s,t)=C(t,s)^*=J_sJ_1^{-1}K_t^*\in\tgsm\otimes\tgtm.
$$

\begin{proposition}\label{2.8}
Let $\g\in\hxy$ be tame, regular and minimal.
Suppose that $x$ and $y$ are non-conjugate along $\g$.
Let $s\in[0,1]$ and $\b\in\ctgsm$.
For $t\in[0,1]$, set $v_t^{\b,s}=C(t,s)\b$.
Then $v^{\b,s}\in\tghxy$ and, for all $v\in\tghxy$,
$$
Q(v,v^{\b,s})=\<\b,v_s\>.
$$
\end{proposition}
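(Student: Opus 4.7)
The plan is to exhibit $v^{\b,s}$ as a piecewise-smooth variation of $\g$ which is Jacobi-like on $[0,s]$ and on $[s,1]$ separately, with a jump in the conjugate momentum at $t=s$ that encodes $\b$, and then to derive the Riesz identity $Q(v,v^{\b,s})=\<\b,v_s\>$ by direct computation of the bilinear form $q$ from Proposition \ref{2.4}.

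First I would verify $v^{\b,s}\in\tghxy$. For $t\in[0,s]$ one has $v_t^{\b,s}=J_t\eta_0$ with $\eta_0=J_1^{-1}K_s^*\b\in\ctxm$; this is the projection onto $M$ of the derivative of the bicharacteristic flow at $\l_0$ in the vertical direction $\eta_0$, hence a smooth tame finite-energy variation with $v_0^{\b,s}=0$. For $t\in[s,1]$ one has $v_t^{\b,s}=K_t\eta_1$ with $\eta_1=(J_1^{-1})^*J_s^*\b\in\ctym$; by symmetry this is a smooth tame variation with $v_1^{\b,s}=0$. The two pieces agree at $t=s$ because $C(s,s)$ is self-adjoint, a consequence of the adjoint relation $J_1=K_0^*$ established in Section \ref{SRN} together with the symplectic invariance of the bicharacteristic flow. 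The concatenated path is absolutely continuous with a square-integrable driving covector (having a bounded jump at $s$), so it satisfies \eqref{v} on each subinterval and belongs to $\tghxy$.

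Next I would verify the Riesz identity. Fix a sub-Riemannian structure $X$ for $a$ and set $h=h(\g)$, $u_t=(\pd/\pd x)\phi_t(x,h)$, and $K=\ker(\pd/\pd h)\phi_1(x,h)$. Let $k=k(\l,v)$ and $k_s=k(\l,v^{\b,s})$. By Proposition \ref{2.4}, it suffices to treat $v=v(k)$ for $k\in K$, and formula \eqref{DIFF} gives
$$
Q(v,v^{\b,s})=q(k,k_s)=\sum_{\ell=1}^m\int_0^1\dot k_t^\ell\left\{\dot k_{s,t}^\ell-A_{t0}^\ell(\l,k_s)\right\}dt.
$$
Splitting the integral at $t=s$, on $[0,s]$ the representation $v^{\b,s}_t=J_t\eta_0$ together with the computation in the proof of Proposition \ref{2.5} yields $\dot k_{s,t}^\ell-A_{t0}^\ell(\l,k_s)=\<\eta_0,u_t^{-1}X_\ell(\g_t)\>$, so by \eqref{vp} this interval contributes $\<\eta_0,u_s^{-1}v_s\>$. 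On $[s,1]$, a time-reversed version of the same calculation, applied via the representation $v^{\b,s}_t=K_t\eta_1$ and using the boundary condition $v_1=0$, contributes the complementary term. The two pieces sum to $\<\b,v_s\>$ upon invoking the self-adjointness of $C(s,s)$ and the defining relations $\eta_0=J_1^{-1}K_s^*\b$ and $\eta_1=(J_1^{-1})^*J_s^*\b$. Since $\{v(k):k\in K\}=\tghxy$ and $Q$ is continuous, the identity extends to every $v\in\tghxy$.

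The main obstacle is the time-reversed analogue of the Proposition \ref{2.5} calculation on $[s,1]$ and the clean verification that the momentum jump at $t=s$ of the combined variation equals exactly $\b$. Geometrically, $v^{\b,s}$ is a Green's function for the Jacobi operator dual to $Q$, with delta-source $\b$ at time $s$; algebraically, the bookkeeping rests on the symplectic identity $J_1=K_0^*$ and its transport along the bicharacteristic $\l$, which is also the relation that ensures the two pieces of $v^{\b,s}$ match at $t=s$.
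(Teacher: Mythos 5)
Your overall intuition about what $v^{\b,s}$ is — a Jacobi-type variation with a momentum jump at $s$ serving as a Green's function for the second variation — is the right picture, and your formulas $\eta_0=J_1^{-1}K_s^*\b$ and $\eta_1=(J_1^{-1})^*J_s^*\b$ agree with the paper's. But the execution has a genuine gap exactly at the place you flag as the ``main obstacle,'' and the gap is not incidental: the time-split strategy is the wrong tool here, and the paper uses a different mechanism to close the argument.

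Concretely, after you compute the contribution of $[0,s]$ as $\<\eta_0,u_s^{-1}v_s\>=\<(u_s^{-1})^*\eta_0,v_s\>$, this is not yet $\<\b,v_s\>$; you are relying on a ``complementary term'' from $[s,1]$ and on self-adjointness of $C(s,s)$, neither of which you establish. More importantly, the integrand in \eqref{DIFF} involves $A^\ell_{t0}(\l,k_s)$, which depends on the whole of $k_s$ on $[0,t]$; once $t>s$ this mixes the two pieces of your concatenation, so a clean ``reversed Proposition \ref{2.5}'' on $[s,1]$ does not drop out, and the boundary term at $t=s$ has to be tracked by hand. The paper sidesteps all of this. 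It writes the forward candidate as a \emph{difference} $w=w^1-w^2$ of two variations defined globally on $[0,1]$ — $w^1_t=J_t\eta_0$ with $\eta_0=J_1^{-1}J_{1s}\b$, and $w^2$ given by \eqref{al} with the indicator $1_{\{t\ge s\}}$ built into $\dot k^2$. Membership of $w$ in $\tghxy$ is then immediate (no gluing to check), and plugging $k=k^1-k^2$ directly into \eqref{DIFF} gives $Q(w,v')=\<\b,v'_s\>$ in one computation with no split at $s$. It then builds the backward candidate $\hat w$ the same way and obtains $Q(\hat w,v')=\<\b,v'_s\>$ by the symmetric argument. Finally — and this is the step your proposal is missing — it uses the non-conjugacy hypothesis, i.e.\ positive-definiteness of $Q$ on $\tghxy$, to deduce $Q(w-\hat w,w-\hat w)=0$ and hence $w=\hat w$. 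That identification is what simultaneously shows $w_t=C(t,s)\b$ on all of $[0,1]$ (each candidate delivers one half of the interval) and replaces your unproven appeals to self-adjointness of $C(s,s)$ and a time-reversed Jacobi computation. Without the positive-definiteness trick, your plan needs an explicit boundary-term analysis that the paper never has to do.
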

\begin{proof}
Set $\eta_0=J_1^{-1}J_{1s}\b$.
We can define $w=w^1-w^2\in\tghxy$ by setting $w^1_t=J_t\eta_0$ for all $t$, and $w^2_t=0$ for $t\le s$ and $w^2_t=J_{ts}\b$ for $t>s$.
Then $w=(\pd/\pd h)\phi(x,h)k$, where $k=k^1-k^2$ with
$$
(\dot k^1_t)^\ell=\<\eta_0,u_t^{-1}X_\ell(\g_t)\>+A_{t0}^\ell(\l,k^1),\q
(\dot k^2_t)^\ell=\<u^*_s\b,u^{-1}_tX_\ell(\g_t)\>1_{\{t\ge s\}}+A_{t0}^\ell(\l,k^2).
$$
Take $v'\in\tghxy$ and set $k'=k(\l,v')$.
Then, using \eqref{DIFF} for the second equality, we have
\begin{align*}
Q(w,v')
&=q(k,k')=\sum_{\ell=1}^m\int_0^1(\dot k_t')^\ell\left\{\dot k_t^\ell-A_{t0}^\ell(\l,k) \right\}\, dt\\
&=\sum_{\ell=1}^m\int_0^1\<\eta_0,u_t^{-1}X_\ell(\g_t)\>(\dot k_t')^\ell\, dt-\int_s^1\<u^*_s\b,u_t^{-1}X_\ell(\g_t)\>(\dot k_t')^\ell\, dt=\<\b,v_s'\>.
\end{align*}
For $t\leq s$ we have
$$
w_t=J_{t0}J_{10}^{-1}J_{1s}\b=J_tJ_1^{-1}K_s^*\b=C(t,s)\b.
$$
Consider now the analogous construction in reverse time.
Set $\eta_1=K_0^{-1}K_{0s}\b$ and define $\hat w=\hat w^1-\hat w^2\in\tghxy$ by setting $\hat w_t^1=K_t\eta_1$,
and $\hat w^2_t=0$ for $t\ge s$ and $\hat w^2_t=K_{ts}\b$ for $t<s$.
Then, by the same argument, $Q(\hat w,v')=\<\b,v'_s\>$ for all $v'\in\tghxy$.
But $w-\hat w\in\tghxy$, so this implies $Q(w-\hat w,w-\hat w)=0$ and hence $\hat w=w$.
So, for $t\ge s$, we have
$$
w_t=\hat w_t=K_{t1}K_{01}^{-1}K_{0s}\b=K_t(J_1^{-1})^*J_s^*\b=C(s,t)^*\b.
$$
Hence $w=v^{\b,s}$ and $v^{\b,s}$ has the claimed property.
\end{proof}

\begin{theorem}\label{SRMR}
Let $x,y\in M$ and suppose that $(x,y)$ lies outside the cut locus.
Denote the unique minimal path in $\hxy$ by $\g$.
Recall the definitions \eqref{BWW} of the process $W$ and \eqref{SDEF} of the random variables $S(z)$ for $z\in\R^d$.
Then $\E(e^{pS(z)/2})<\infty$ for all $z\in\R^d$, for some $p>1$.

Define a new probability measure $\tilde\PP$ on $\orm$ by $d\tilde\PP/d\PP\propto e^{S(0)/2}$ 
and write $\mg$ for the law on $\tgoxy$ of $Y(0)=v(W)$ under $\tilde\PP$.
Then $\mg$ is a zero-mean Gaussian probability measure on $\tgoxy$ with the following properties
\begin{itemize}
\item[{\rm (i)}] for all continuous linear functionals $\phi$ on $\tgoxy$, 
$$
\int_\tgoxy\phi(v)^2\mg(dv)=Q(\tilde\phi)
$$
where $\tilde\phi\in\tghxy$ is given by $\phi(v)=Q(\tilde\phi,v)$ for all $v\in\tghxy$,
\item[{\rm (ii)}] for all $s,t\in[0,1]$ with $s\le t$,
$$
\int_{\tgoxy} v_s\otimes v_t\,\mg(dv)=J_sJ_1^{-1}K_t^*.
$$
\end{itemize}
Moreover, the properties {\rm (i)} and {\rm (ii)} both characterize $\mg$ uniquely.
\end{theorem}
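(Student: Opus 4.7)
The plan is to establish exponential integrability of $e^{pS(z)/2}$, then show the tilted law $\mg$ is a centered Gaussian satisfying property (i), next derive (ii) from (i) via Proposition \ref{2.8}, and finally argue uniqueness. Throughout I use that, since $(x,y)$ lies outside the cut locus, Theorem \ref{2.7} gives $Q>0$ on $\tghxy$, and by Proposition \ref{2.4} its pullback $q(k)=\|k\|^2-\<\l_1,(\pd^2/\pd h^2)\phi_1(x,h)(k,k)\>$ under the parametrization $k\mapsto v(k)$ is positive-definite on $K=\ker v_1$.

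First I prove exponential integrability. From \eqref{SDEF} and the linearity of $Y_t(z)=v_t(W+\t z)$ in the Gaussian process $W$, the random variable $Y_1^{(2)}(z)$ lies in the sum of the zeroth, first and second Wiener chaoses, so $S(z)=\<\l_1,Y_1^{(2)}(z)\>$ is quadratic-plus-linear-plus-constant in $W$. Writing the quadratic part as $\<k,Lk\>$ for a symmetric Hilbert--Schmidt operator $L$ on $K$, the key identification to establish is $L=I-q$ as operators on $K$. This is seen by replacing the Gaussian $W$ in \eqref{SDEF} by a smooth $k\in K$: the It\^o integral $2\nabla X_\ell(\g_t)Y_t(0)dW^\ell_t$ collapses to an ordinary Lebesgue integral and the whole right-hand side equals the classical pathwise second derivative $\<\l_1,(\pd^2/\pd h^2)\phi_1(x,h)(k,k)\>=\|k\|^2-q(k)$. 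Positivity of $q$ then forces $\|L^+\|_{\rm op}<1$, so Nelson's hypercontractivity for second-chaos functionals yields $\E(e^{pS(0)/2})<\infty$ for some $p>1$. The extension to general $z\in\R^d$ is by Cameron--Martin translation, since $Y(z)-Y(0)=v(\t z)$ is a deterministic element and changes $S(z)$ by a linear-plus-constant Wiener functional.

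With $\tilde\PP$ a probability measure, I argue that $W$ remains Gaussian under $\tilde\PP$ with modified covariance. On each finite-dimensional projection of $K$, tilting the standard Gaussian $N(0,I)$ by $e^{\frac12\<k,Lk\>}$ produces $N(0,(I-L)^{-1})=N(0,q^{-1})$; taking the projective limit (equivalently, evaluating the characteristic functional directly by Gaussian integration with a quadratic exponent and using the operator norm bound from the first step) gives that the Gaussian process $W$ on $K$ has inverse covariance $q$ under $\tilde\PP$. Since the isomorphism $v$ from $K\cap(\ker v)^\perp$ onto $\tghxy$ transports $q$ to $Q$ by Proposition \ref{2.4}, the pushforward $\mg$ is a zero-mean Gaussian on $\tgoxy$ supported on $\tghxy$, and for every continuous linear $\phi$ on $\tgoxy$ one has
$$
\int_\tgoxy\phi(v)^2\mg(dv)=Q(\tilde\phi),
$$
where $\tilde\phi\in\tghxy$ is the Riesz representative of $\phi|_{\tghxy}$ with respect to the inner product $Q$. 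This is (i). Property (ii) follows as a short corollary using Proposition \ref{2.8}: for $\b\in\ctgsm$, $\b'\in\ctgtm$ with $s\le t$, the evaluations $\phi_\b(v)=\<\b,v_s\>$, $\phi_{\b'}(v)=\<\b',v_t\>$ are continuous on $\tgoxy$ with $Q$-Riesz representatives $v^{\b,s}_\cdot=C(\cdot,s)\b$ and $v^{\b',t}_\cdot=C(\cdot,t)\b'$, so polarizing (i) yields
$$
\int\<\b,v_s\>\<\b',v_t\>\mg(dv)=Q(v^{\b,s},v^{\b',t})=\<\b',v^{\b,s}_t\>=\<\b',J_sJ_1^{-1}K_t^*\b\>.
$$
Uniqueness under each of (i) and (ii) is immediate, since a zero-mean Gaussian measure on the separable Banach space $\tgoxy$ is determined by the variances of its continuous linear functionals (characterization (i)), and equivalently by all pairwise covariances of coordinate evaluations (characterization (ii)).

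The main technical obstacle is the identification $L=I-q$ in the first paragraph. In passing from the deterministic $k$-based computation to the stochastic $W$-based one, the It\^o integral $2\nabla X_\ell(\g_t)Y_t(0)dW^\ell_t$ carries an It\^o--Stratonovich correction which generates first-chaos and constant contributions to $S(0)$. A careful kernel computation using the explicit representation \eqref{VEQ} of $v$ in terms of $u_tu_s^{-1}X_\ell(\g_s)$, analogous to Bismut's calculations in \cite{B}, is needed to verify that these lower-chaos terms affect only the normalizing constant $Z$ (not the quadratic operator $L$), and hence do not disturb the covariance identification of the tilted Gaussian.
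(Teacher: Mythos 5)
Your proposal is correct and follows essentially the same approach as the paper's proof: identify the quadratic part of $S(0)$ with the operator $I-q$ on $K$, use positivity of $q$ for exponential integrability, and read off the tilted Gaussian's covariance as $q^{-1}$, which transports under $v$ to $Q$-Riesz covariance on $\tghxy$; then (ii) follows from Proposition \ref{2.8} and uniqueness is standard. The paper carries out these steps by fixing an orthonormal basis of $K$ diagonalizing $s$ and computing the one-dimensional Gaussian integrals explicitly (yielding the product over $n$ of $(1+ps_n)^{-1/2}e^{p^2\s_n^2/(2(1+ps_n))}$), while you package the same estimate as hypercontractivity of a second-chaos variable with $\|L^+\|_{\rm op}<1$; since $L$ is compact, these are equivalent. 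Your flagged concern about the It\^o--Stratonovich correction in the identification $S(0)=s(W)$ is legitimate and is exactly what the paper addresses only informally (``Formally, we have $S(z)=s(W(z))$ so, by a standard calculus for the Wiener chaos\dots''); as you note, when $z=0$ the correction is deterministic and so contributes only to the partition function, leaving the quadratic operator and hence the covariance unchanged, which is what makes the paper's formal step harmless.
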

\begin{proof}
Choose a sub-Riemannian structure $X$ for $a$ and consider the continuous quadratic form on $\hrm$ given by
$$
s(k)=\<\l_1,(\pd/\pd h)^2\phi_1(x,h)(k,k)\>.
$$
Fix an orthonormal basis $(e_n:n\in\N)$ for $K$ which diagonalizes $s$ on $K$.
Formally, we have $S(z)=s(W(z))$ so, by a standard calculus for the Wiener chaos, 
\begin{equation}\label{TSZ}
S(z)=\sum_ns(e_n)\<W,e_n\>^2+2\sum_ns(e_n,\t(z))\<W,e_n\>+s(\t(z))
\end{equation}
where the sums are understood in $L^2(\PP)$ and the random variables $\<W,e_n\>$ are independent standard Gaussians.
Set $s_n=-s(e_n)$ and $\s_n=s(e_n,\t(z))$.
We see from \eqref{TSZ} that the series $\sum_ns_n$ converges and $\sum_n\s_n^2<\infty$. In particular $s_n\to0$ as $n\to\infty$.
Since $x$ and $y$ are non-conjugate along $\g$, for all non-zero $k\in K$, we have
$$
\|k\|^2-s(k)=q(k)=Q(v(k))+\|k'\|^2>0
$$
where $k'=k-k(\l,v(k))$.
Hence $1+s_n>0$ for all $n$.
Denote by $\nu$ the standard Gaussian distribution on $\R$.
Then, for all $s\in(-1,\infty)$ and all $\s\in\R$, we have
\begin{equation}\label{TSB}
\int_\R e^{\s x}e^{-sx^2/2}\nu(dx)=\frac{1}{\sqrt{1+s}}e^{\s^2/(2(1+s))}.
\end{equation}
Hence, for $p>1$ sufficiently close to $1$, for all $z\in\R^d$, we have
$$
\E(e^{pS(z)/2})=e^{ps(\t(z))}\prod_n\int_\R e^{p\s_n x}e^{-ps_nx^2/2}\nu(dx)=e^{s(\t(z))}\prod_n\frac{1}{\sqrt{1+ps_n}}e^{p^2\s_n^2/(2(1+ps_n))}<\infty.
$$ 
Let $\phi$ be a continuous linear functional on $\tgoxy$ and write $\tilde\phi=v(k^*)$, where $k^*=k(\l,\tilde\phi)$.
Then
$$
\phi(v(e_n))=Q(\tilde\phi,v(e_n))=q(k^*,e_n)=\<k^*,e_n\>q(e_n).
$$
From \eqref{TSB}, we see that under $\tilde\PP$, the random variables $\<W,e_n\>$ are independent zero-mean Gaussians of variance $(1+s_n)^{-1}=q(e_n)^{-1}$.
Now
$$
\phi(Y(0))=\phi(v(W))=\sum_n\<W,e_n\>\phi(v(e_n))
$$
so the law $\mg$ of $Y(0)$ on $\tgoxy$ under $\tilde\PP$ is a zero-mean Gaussian measure with 
$$
\int_\tgoxy\phi(v)^2\mg(dv)=\E(\phi(Y(0))^2)=\sum_n\phi(v(e_n))^2/q(e_n)=\sum_n\<k^*,e_n\>^2q(e_n)=q(k^*)=Q(\tilde\phi)
$$
as claimed.
Finally, by Proposition \ref{2.8}, for $\a\in\ctgsm$ and $\b\in\ctgtm$, we have
$$
\int_\tgoxy\<\a,v_s\>\<\b,v_t\>\mg(dv)=Q(v^{\a,s},v^{\b,t})=\<\a,C(s,t)\b\>
$$
so $\mg$ also has the claimed covariance. The uniqueness statements are standard.
\end{proof}

\section{Brownian bridge on a Riemannian manifold}\label{RIEM}
Suppose now that $M$ is a connected $\cinf$ Riemannian manifold.
In this section we will use the Levi--Civita connection $\nabla$ and Riemann curvature tensor $R$ to give alternative characterizations for the cut locus and for the limit Gaussian measures of small-time Brownian bridges.
Write $\Delta$ for the Laplace--Beltrami operator and set 
$$
\cL=\tfrac12\Delta+\bar X_0
$$
where $\bar X_0$ is a $\cinf$ vector field on $M$. 
Then $\cL$ has the form \eqref{LAB} and its diffusivity $a$ is the inverse of the metric tensor.
We obtain in this way all operators $\cL$ of the form \eqref{LAB} with $a$ everywhere positive-definite.
In this context, it is well known that there exist $m\in\N$ and $C^\infty$ vector fields $X_1,\dots,X_m$ on $M$ such that \eqref{LAC} and \eqref{LAD} hold, with
$$
\spann\{Y(x):Y\in\cA(X_1,\dots,X_m)\}=\spann\{X_1(x),\dots,X_m(x)\}=T_xM\q\text{for all $x\in M$.}
$$
In the case $\bar X_0=0$, the measure $\mexy$ is the law of the Riemannian Brownian bridge from $x$ to $y$ of speed $\sqrt\ve$.

Fix $x,y\in M$ and assume, as above, that there is a unique minimal path $\g\in\hxy$, which is strongly minimal.
It is well known that in this context $\g$ is always the projection of a bicharacteristic.
Define a linear map $R_t:\tgtm\to\tgtm$, symmetric with respect to the metric, by $R_t=R(.,\dot\g_t)\dot\g_t$. 
It is straightforward to see that the set $\tghxy$, defined more generally in Section \ref{SUB},
is here given by the set of all absolutely continuous paths $v$ in $\tgoxy$ such that
$$
\int_0^1|\nabla v_t|^2dt<\infty.
$$
Moreover, we can define, for $\eta\in\R$ sufficiently small, a path $\g^\eta\in\hxy$ by $\g^\eta_t=\exp_{\g_t}(\eta v_t)$. Then the map $\eta\mapsto I(\g^\eta)$ is twice differentiable near $0$ with
\begin{equation}\label{QR}
Q(v)=\left.\frac{\pd^2}{\pd\eta^2}\right|_{\eta=0}I(\g^\eta)=\int_0^1|\nabla v_t|^2\, dt-\int_0^1\<v_t,R_tv_t\>\, dt.
\end{equation}
See, for example \cite{K}.
By a standard calculation, the processes $(J_t)_{t\in[0,1]}$ and $(K_t)_{t\in[0,1]}$, defined above, are Jacobi fields along $\g$ and satisfy the differential equations
$$
\nabla^2J_t+R_tJ_t=0,\quad J_0=0,\quad\nabla J_0=a(x)
$$
and
$$
\nabla^2K_t+R_tK_t=0,\quad K_1=0,\quad\nabla K_1=-a(y).
$$
Let $(b_t)_{t\in[0,1]}$ be a Brownian motion in $\txm$, starting from $0$.
Set $z_t=b_t-tb_1$. 
Then $(z_t)_{t\in[0,1]}$ is a Brownian bridge in $\txm$ from $0$ to $0$ in time $1$. 
Let $(\t_t)_{t\in[0,1]}$ denote parallel translation along $\g$, thus $\t_t\in\tgtm\otimes\ctxm$ and $\t_0=\id$, $\nabla\t_t=0$.
Let $\bar\mu$ denote the law of $(\t_tz_t)_{t\in[0,1]}$ on $\tgoxy$.
The following result is well known.
See \cite{K} and, for (vi), \cite[Theorem 4.17]{B}.
Although it is framed geometrically, it is essentially a result about quadratic equations in matrices and Gaussian processes,
as one can see by choosing a chart along $\g$ such that $a(\g_t)=\t_t=\id$ for all $t$. 
\begin{proposition}\label{1.5}
The following are equivalent
\begin{itemize}
\item[{\rm (i)}] $J_t$ is invertible for all $t\in(0,1]$,
\item[{\rm (ii)}] $K_t$ is invertible for all $t\in[0,1)$,
\item[{\rm (iii)}] there exists a $C^1$ path $(A_t)_{t\in[0,1)}$ along $\g$, with $A_t\in T_{\g_t}M\otimes T^*_{\g_t}M$,
solving the Riccati equation
$$
\nabla A_t+A_t^2+R_t=0,\quad (1-t)A_t\ra-\id\quad {\text as}\quad t\to 1,
$$
\item[{\rm (iv)}] there exists a $C^1$ path $(B_t)_{t\in(0,1]}$ along $\g$, with $B_t\in T_{\g_t}M\otimes T^*_{\g_t}M$,
solving the Riccati equation
$$
\nabla B_t+B_t^2+R_t=0,\quad tB_t\ra\id\quad {\text as}\quad t\to 0,
$$
\item[{\rm (v)}] $Q$ is positive-definite on $\tghxy$,
\item[{\rm (vi)}] we have
$$
\int_\tgoxy\exp\left\{\frac12\int_0^1\<v_t,R_tv_t\>\, dt\right\}\bar\mu(dv)<\infty.
$$
\end{itemize}
Moreover, under these conditions, we have 
$$
J_1=K_0^*,\q \nabla K_t=A_tK_t,\q \nabla J_t=B_tJ_t
$$
and for $t<1$
$$
J_t=K_t\int_0^tK_s^{-1}a(\g_s)(K_s^{-1})^*\,ds\,K_0^*.
$$
\end{proposition}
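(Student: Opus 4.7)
The proposition reduces, as the authors indicate, to a statement about matrix-valued ODEs and Gaussian processes. Choosing a chart along $\g$ together with parallel translation, one can arrange that $a(\g_t)=\id$ and $\t_t=\id$, so that $J_t$ and $K_t$ become $d\times d$ matrix solutions of $\ddot X+R_tX=0$ with $R_t$ symmetric, satisfying $J_0=0$, $\dot J_0=\id$ on the left and $K_1=0$, $\dot K_1=-\id$ on the right. I work throughout in this trivialization.

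The plan has several steps. First, (i) $\Leftrightarrow$ (iv) and (ii) $\Leftrightarrow$ (iii) are the Jacobi--Riccati correspondence: given $J_t$ invertible on $(0,1]$, the matrix $B_t=\dot J_tJ_t^{-1}$ is $C^1$, satisfies $\dot B_t+B_t^2+R_t=0$, and inherits $tB_t\to\id$ from the Taylor expansion $J_t=t\id+O(t^3)$; conversely a solution $B_t$ of the singular Riccati problem determines $J_t$ via the linear ODE $\dot J_t=B_tJ_t$ with $J_t\sim t\id$. Second, differentiating $X^*\dot Y-\dot X^*Y$ for any two Jacobi fields $X,Y$ gives $X^*\ddot Y-\ddot X^*Y=-X^*R_tY+(R_tX)^*Y=0$ using $R_t^*=R_t$, so that the Wronskian is constant. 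Taking $X=K$, $Y=J$ and evaluating at $0$ and $1$ yields $J_1=K_0^*$; taking $X=Y=K$ gives the symmetry of $K_t^*\dot K_t$, and similarly for $J_t^*\dot J_t$. Third, assuming $K_t$ invertible on $[0,1)$ and seeking $J_t=K_tC_t$, the Jacobi equation collapses to $K_t\ddot C_t+2\dot K_t\dot C_t=0$, which by the preceding symmetry becomes $(K_t^*K_t\dot C_t)^\cdot=0$. Integrating twice and matching $J_0=0$, $\dot J_0=\id$ (so $C_0=0$, $\dot C_0=K_0^{-1}$) yields the displayed integral formula, whose integrand is positive-definite; hence $J_t$ is invertible throughout $(0,1]$, closing the chain (i) $\Leftrightarrow$ $\cdots$ $\Leftrightarrow$ (iv).

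Fourth, (i) $\Leftrightarrow$ (v) is the classical equivalence between absence of conjugate points and positivity of the index form. If $J_{t_0}\xi=0$ for some $t_0\in(0,1]$ and $\xi\ne 0$, a smoothed version of the path equal to $J_t\xi$ on $[0,t_0]$ and $0$ beyond realises $Q\le 0$, with strict inequality by a standard corner-rounding argument. Conversely, writing $v_t=J_th_t$ with $v\in\tghxy$ and integrating by parts using $\nabla J_t=B_tJ_t$, the Riccati equation, and the symmetry of $B_t$ (inherited from the symmetry of $J_t^*\dot J_t$), one obtains the clean identity $Q(v)=\int_0^1|J_t\dot h_t|^2\,dt\ge 0$, with equality forcing $h$ constant and hence (since $v_1=0$ and $J_1$ is invertible) $v\equiv 0$. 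Fifth, (v) $\Leftrightarrow$ (vi) is a Gaussian exponential-moment statement: under $\bar\mu$ the process $v$ is a Brownian bridge in $\R^d$, the functional $\int_0^1\<v_t,R_tv_t\>\,dt$ is a continuous quadratic form whose associated self-adjoint operator on the Cameron--Martin space $\tghxy$ is a trace-class perturbation that differs from $Q$ only by a shift; a Fernique-type argument then identifies finiteness of $\int\exp\{\tfrac12\int\<v,R_tv\>\,dt\}\bar\mu(dv)$ with spectral radius strictly less than $1$, i.e.\ with positivity of $Q$. The remaining identities of the proposition are read off from the constructions above: $\nabla J_t=B_tJ_t$ and $\nabla K_t=A_tK_t$ from the definitions, $J_1=K_0^*$ from step two, and the integral formula from step three.

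The main technical obstacle is the clean handling of the boundary singularities, both in the limits $(1-t)A_t\to-\id$ and $tB_t\to\id$ and when extracting $J$ from $K$ as $t\to 1$ in the variation-of-parameters formula; this requires careful singular-ODE theory at the endpoints. The Gaussian step (v) $\Leftrightarrow$ (vi) is conceptually routine, but it hinges on correctly identifying the intrinsic form $Q$ with the Cameron--Martin spectral data of the bridge, which is where the symmetry of $R_t$ enters most essentially.
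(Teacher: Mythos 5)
Your sketch is correct and follows precisely the reduction the paper itself indicates: trivialize in a chart along $\g$ so that $a(\g_t)=\t_t=\id$, and reduce everything to facts about matrix Jacobi/Riccati equations and Gaussian quadratic functionals. The paper does not actually write out a proof --- it cites Klingenberg \cite{K} for the Jacobi/Riccati/index-form equivalences and Bismut \cite[Theorem 4.17]{B} for the exponential-moment criterion (vi) --- and your steps supply the expected details of that reduction (constant Wronskian giving $J_1=K_0^*$ and symmetry of $J_t^*\dot J_t$, the variation-of-parameters formula, the identity $Q(v)=\int_0^1|\nabla v_t-B_tv_t|^2\,dt=\int_0^1|J_t\dot h_t|^2\,dt$, and the Karhunen--Lo\`eve/trace-class criterion for (vi)), while correctly flagging the singular endpoint behavior as the only point needing care.
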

It is standard that the condition that $\g$ is minimal implies already that $Q$ is non-negative and that $J_t$ is invertible for all $t\in(0,1)$.
In this context then, condition (i) is equivalent to the condition that $x$ and $y$ are non-conjugate along $\g$, that is,
there is no non-trivial vector field $(v_t)_{t\in[0,1]}$ along $\g$ vanishing at the endpoints and such that $\nabla^2 v_t+R_tv_t=0$ for all $t$.
In Section \ref{SUB}, we saw that (i), (ii) and (v) were also equivalent in the sub-Riemannian case.

The following result gives three further characterizations for the Gaussian measure $\mu_\g$,
which by Theorem \ref{MR} describes the small-time fluctuations of the Brownian bridge in $M$.
As in the preceding result, by choice of a suitable chart along $\g$, we can reduce to the case where $M=\R^d$ with $a(\g_t)=\t_t=\id$ for all $t$. 
The result is then of a standard type for Gaussian processes.
See for example \cite{NV}.
\begin{theorem}\label{RMR}
Let $M$ be a connected Riemannian manifold and let $x,y\in M$.
Suppose that there is a unique minimal path $\g\in\hxy$ and that $(x,y)$ is non-conjugate along $\g$.
Then there exists a zero-mean Gaussian probability measure $\mg$ on $\tgoxy$
with the following properties:
\begin{itemize}
\item[{\rm (i)}] for $s\le t$, we have
$$
\int_{\tgoxy} v_s\otimes v_t\,\mg(dv)=J_sJ_1^{-1}K_t^*,
$$
\item[{\rm (ii)}] for all continuous linear functionals $\phi$ on $\tgoxy$, we have
$$
\int_\tgoxy\phi(v)^2\mg(dv)=Q(\tilde\phi)
$$
where $\tilde\phi\in\tghxy$ is determined by $\phi(v)=Q(\tilde\phi,v)$ for all $v\in\tghxy$,
\item[{\rm (iii)}] under $\mg$, the coordinate process $v$ on $\tgoxy$ satisfies a covariant linear stochastic differential equation over $\g$ of the form
$$
\nabla v_t=\t_tdb_t+A_tv_tdt,\quad v_0=0,
$$
\item[{\rm (iv)}] $\mg$ is absolutely continuous with respect to $\bar\mu$, with Radon--Nikodym derivative 
\begin{equation}\label{exp}
\frac{d\mg}{d\bar\mu}(v)\propto\exp\left\{\frac12\int_0^1\<v_t,R_tv_t\>\, dt\right\}.
\end{equation}
\end{itemize}
Moreover, any one of these properties characterizes $\mg$ uniquely.
\end{theorem}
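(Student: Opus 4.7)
The plan is to define $\mg$ via characterization (iii) and verify the other three, observing at the end that any of (i)--(iv) uniquely determines a centered Gaussian. Following the remark after the statement, I first choose a chart along $\g$ in which $a(\g_t) = \t_t = \id$ for all $t$, so all four conditions become statements about an $\R^d$-valued Gaussian process pinned at $v_0 = v_1 = 0$; in this chart $R_t$, $J_t$ and $K_t$ are just matrix-valued functions of $t$.

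Take $A_t$ from Proposition \ref{1.5}(iii), let $b$ be a standard Brownian motion in $\R^d$, and let $v$ solve $dv_t = db_t + A_t v_t\,dt$ with $v_0=0$. The blow-up $(1-t)A_t\to -\id$ provides exactly the bridge-type drift needed to pull $v$ back to the origin at time $1$, so $v_1=0$ almost surely, and the law of $v$ defines a Gaussian probability measure $\mg$ on $\tgoxy$. To verify (i), the variation-of-constants formula gives $v_t = \Phi_t\int_0^t\Phi_u^{-1}\,db_u$ where $\Phi_t$ solves $\dot\Phi_t = A_t\Phi_t$ with $\Phi_0 = \id$. Since $\nabla K_t = A_t K_t$ by Proposition \ref{1.5}, one has $\Phi_t = K_t K_0^{-1}$; substituting and using the identities $J_t = K_t\int_0^t K_u^{-1}a(\g_u)(K_u^{-1})^*\,du\,K_0^*$ and $J_1 = K_0^*$ of the same proposition yields $\E[v_s v_t^*] = J_s J_1^{-1}K_t^*$ for $s\le t$, which is (i).

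Characterization (ii) follows from (i) by general Gaussian-measure theory: for a centered Gaussian on a Banach space the Cameron--Martin inner product is the inverse of the covariance. Given the assignment $\phi\mapsto\tilde\phi$, the identity $\int\phi^2\,d\mg = Q(\tilde\phi)$ is equivalent to the statement that $Q$ on $\tghxy$ is this inverse, which one verifies by a direct calculation using \eqref{QR}, integration by parts, and the Jacobi equations for $J_t$ and $K_t$. Positivity of $Q$ from Proposition \ref{1.5}(v) ensures the map is a bijection. For (iv), under $\bar\mu$ the coordinate process satisfies $dv_t = db_t - v_t(1-t)^{-1}\,dt$, under $\mg$ it satisfies the SDE of (iii), and a Girsanov calculation using $\nabla A_t + A_t^2 + R_t = 0$ to collapse the quadratic-variation terms produces the claimed exponential density up to a normalization constant that is finite by Proposition \ref{1.5}(vi).

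The main obstacle is (iv): both $\mg$ and $\bar\mu$ are singular with respect to the unpinned SDE law because of the $v_1 = 0$ constraint, so Girsanov cannot be applied directly on the full interval $[0,1]$. The standard workaround is to perform the change of measure on each $[0,1-\ve]$, where both laws are absolutely continuous with respect to Brownian motion, and then pass to the limit $\ve\to 0$, using the integrability in Proposition \ref{1.5}(vi) to identify the limit of the Radon--Nikodym derivative and to certify that it is a probability density. Uniqueness of $\mg$ under each characterization is then immediate: a centered Gaussian is determined by its covariance, giving (i) and (ii) via duality; (iii) determines the measure pathwise by strong uniqueness of the linear SDE; (iv) via the explicit density relative to $\bar\mu$.
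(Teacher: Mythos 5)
Your proposal is correct and follows essentially the same route as the paper, which the authors only sketch: both construct $\mg$ via the linear SDE in (iii), pass to (i) by the variation-of-constants formula $v_t = K_t\int_0^t K_u^{-1}\,db_u$ together with the identities $\nabla K_t = A_tK_t$, $J_t = K_t\int_0^tK_u^{-1}a(\g_u)(K_u^{-1})^*\,du\,K_0^*$ and $J_1 = K_0^*$ from Proposition \ref{1.5}, and then invoke standard Gaussian theory for (ii) and a Girsanov computation for (iv). The paper gives only the illustrative calculation (iii)$\Rightarrow$(i) (via $w_t = K_t^{-1}v_t$, identical in substance to your $\Phi_t = K_tK_0^{-1}$), defers (i)$\Leftrightarrow$(ii) to the general Section \ref{QG}, and otherwise refers to \cite{NV}; you have filled in the (iv) step with the correct truncation-to-$[0,1-\ve]$ trick and the cancellation driven by the Riccati equations, together with the integrability coming from Proposition \ref{1.5}(vi). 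The only cosmetic remark is that the claim "$v_1 = 0$ almost surely" follows most cleanly by reading off from (i) that $\E|v_t|^2 = \mathrm{tr}(J_tJ_1^{-1}K_t^*)\to 0$ as $t\to 1$; your appeal to the drift blow-up $(1-t)A_t\to-\id$ is morally the same but the moment estimate is more direct.
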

The equivalence of (i) and (ii) was established in a more general context in Section \ref{QG}.
Here is an illustrative calculation, taking advantage of the reduction to $\t_t=\id$ mentioned above. 
Suppose that $\mu$ satisfies (iii). 
Set $w_t=K_t^{-1}v_t$, then $dw_t=K_t^{-1}db_t$, so
$$
w_t=\int_0^tK_s^{-1}db_s.
$$
Hence, for $0\le s\le t\le 1$,
$$
\int_{\tgoxy} v_s\otimes v_t\,\,\mu(dv)=K_s\left(\int_0^sK_r^{-1}(K_r^{-1})^*dr\right)K_t^*=J_sJ_1^{-1}K_t^*.
$$
Hence $\mu$ satisfies (i).

We examine now how our result specializes in some simple cases. 
When $M=\R^d$ with Euclidean metric, the analysis is trivial, because $\tmexy=\mg$ for all $\ve>0$. 
Then
$$
J_s=sI,\q K_t=(1-t)I,\q R_t=0,\q A_t=-I/(1-t)
$$
so the alternatives in Theorem \ref{RMR} are simply some of the standard
descriptions of the Brownian bridge in $\R^d$.

In the case where $M$ is a sphere or hyperbolic space, we can rewrite \eqref{exp} in the form
$$
\frac{d\mg}{d\bar\mu}(y)
\propto\exp\left\{\frac{ K d(x,y)^2}{2}\int_0^1|y_t|^2\, dt\right\},
$$
where $K$, the scalar curvature, is $1$ for the sphere and $-1$ for hyperbolic space. 
Thus, on a sphere, the variance of the fluctuations is larger than in $\R^d$, whereas, in hyperbolic space it is less.
This does not contradict the tendency of Brownian paths to separate quickly in hyperbolic space because we are conditioning on the endpoint.
Thus we tend to see those paths which have never deviated far from the geodesic.

\bibliography{p}

\end{document}